\renewenvironment{proof}{\noindent{\sffamily{\textbf{Proof :}}}}{\begin{flushright}$\square$\end{flushright}}
\newcommand{\IE}{\mathbb{E}}
\newcommand{\IR}{\mathbb{R}}
\newcommand{\IT}{\mathbb{T}}
\newcommand{\drm}{\mathrm d}
\newcommand{\CT}{\mathcal K}
\newcommand{\CR}{\mathcal T}
\newcommand{\CC}{\mathcal C}
\newcommand\N{\mathbb{N}}
\newcommand\T{\mathbb{T}}
\newcommand\Z{\mathbb{Z}}
\newcommand\R{\mathbb{R}}
\newcommand\C{\mathbb{C}}
\newcommand\E{\mathbb{E}}
\newcommand{\dd}{\mathrm{d}}
\newcommand\A{\mathfrak{A}}
\newcommand\eps{\varepsilon}
\renewcommand{\Re}{\operatorname{Re}}
\renewcommand{\Im}{\operatorname{Im}}
\newcommand{\Norm}[2]{\|#1\|\left.\vphantom{T_{j_0}^0}\!\!\right._{#2}}
\newcommand{\widebar}[1]{\mkern 1.5mu\overline{\mkern-1.5mu#1\mkern-1.5mu}\mkern 1.5mu}
\definecolor{toon}{RGB}{71,145,67} 
\definecolor{ocre}{RGB}{64,123,121}
\definecolor{S}{rgb}{0.0,0.5,0.0}
\newcounter{item}
\numberwithin{item}{section}
\newtheorem{theorem}[item]{\sffamily Theorem}
\newtheorem{definition}[item]{\sffamily Definition}
\newtheorem{proposition}[item]{\sffamily Proposition}
\newtheorem{lemma}[item]{\sffamily Lemma}
\newtheorem{corollary}[item]{\sffamily Corollary}
\newtheorem*{theorem*}{\sffamily Theorem}
\newtheorem*{definition*}{\sffamily Definition}
\newtheorem*{proposition*}{\sffamily Proposition}
\newtheorem*{lemma*}{\sffamily Lemma}
\newtheorem*{corollary*}{\sffamily Corollary}
\titleformat{\section}{\centering\Large\bfseries}{\thesection \ --}{0.7em}{\Large\bfseries #1}
\titleformat{\subsection}{\centering\large\bfseries}{\thesubsection \ --}{0.4em}{\large\bfseries #1}
\titleformat{\subsubsection}{\centering\bfseries}{\thesubsubsection \ --}{0.4em}{\bfseries #1}
\let\emph\relax
\DeclareTextFontCommand{\emph}{\bfseries\em}
\title{\bfseries Scattering, random phase and wave turbulence}
\author{Erwan FAOU and Antoine MOUZARD}
\date{}
\begin{document}

\maketitle
\abstract{We start from the remark that in wave turbulence theory, exemplified by the cubic two-dimensional Schr\"odinger equation (NLS) on the real plane, the regularity of the resonant manifold is linked with dispersive properties of the equation and thus with scattering phenomena. In contrast with classical analysis starting with a dynamics on a large periodic box, we propose to study NLS set on the real plane using the dispersive effects, by considering the  time evolution operator in various time scales for deterministic and random initial data. By considering periodic functions embedded in the whole space by gaussian truncation, this allows explicit calculations and we identify two different regimes where the operators converges towards the kinetic operator but with different form of convergence.}
\vspace{0.5cm}

\section{Introduction and main results}

The theory of wave turbulence aims at describing the nonlinear interaction of waves outside thermal equilibrium, and also as the statistical behavior of a system of random nonlinear waves. Following Boltzmann's kinetic theory of gases, a wave kinetic theory was developped during the last century. It was first studied by Peierls for the description of anharmonic crystals in the 1930's and then by Hasselman for water waves equations and by Zakharov for out-of-equilibrium turbulent systems in the 1960's. As with Boltzmann's kinetic theory, the idea is to describe weak interactions of a large number of waves and the main mathematical contributions concern the case of weakly nonlinear equations with small nonlinearity. We refer to Zakharov, L'vov and Falkovich \cite{ZLF92} and Nazarenko \cite{Naz1} for a complete description of the theory of kinetic wave turbulence. 

\medskip

The kinetic theory of wave turbulence is not yet understood within a complete rigorous mathematical picture however spectacular recent progresses have been made recenlty, see \cite{DH1,DH2,CoGe20,BGHS,LuSpohn} and also \cite{dymovkuksin1,dymovkuksin2,faou1} for problems with random forcing. An important example is given by the cubic nonlinear Schr\"odinger equation
\begin{equation}\tag{NLS}
i\partial_tu=-\Delta u+\varepsilon|u|^2u
\end{equation}
on $\T_L^d$ the periodic box of size $L\gg1$ with nonlinearity strength $|\varepsilon|\ll1$. In Fourier variables, this rewrites as the system of coupled equations
\begin{equation*}
i\partial_tu_K=\omega_K u_K+\varepsilon\sum_{K=K_1-K_2+K_3}u_{K_1}\overline{u_{K_2}}u_{K_3}
\end{equation*}
for $K\in\Z_L^d$ the lattice of mesh $L^{-1}$ and $\omega_K=|K|^2$ the dispersion relation. The study of nonlinear dispersive equations on compact domains is very rich and a large number of phenomena can occur, both for deterministic or random initial data. The main difference in comparison with the equation on the full space is that the dispersion of the solution does not imply decay in space, both for linear and nonlinear equations. In particular, all asymptotic stability results around equilibrium are not valid anymore. For example, Faou, Germain and Hani \cite{fgh} found coherent dynamics in this large volume and weak nonlinearity regime for NLS in two dimensions, described by the Continuous Resonant (CR) equation
\begin{equation}\tag{CR}
i\partial_tg=\CR(g)
\end{equation}
with the operator
\begin{equation}
\label{CR}
\CR_k(g)=\int_{-1}^1\int_{\R^2}g(k+\lambda z)\widebar g(k+\lambda z+z^\bot)g(k+z^\bot)\dd\lambda\dd z
\end{equation}
for $k\in\R^2$ and $z^\bot$ the rotation of $z$ by the angle $\frac{\pi}{2}$. This equation appears as the continuous limit of the system of equations given by \eqref{NLS} in the Fourier variables and the resonant system appears as its first Birkhoff normal form approximation. 

\medskip

In kinetic theory of wave turbulence, the idea is to start with a random state outside the thermal equilibrium and to describe the evolution of the covariance of the Fourier coefficients $(u_K)_{K\in\Z_L^d}$. Two assumptions are usually made for the randomness in the initial data, that is Random Phase (RP) or Random Phase and Amplitudes (RPA). The first assumption amounts to having the angles of the Fourier coefficients to be independent and identically distributed uniform random variables on the unit circle. For the second assumption, the amplitudes of the Fourier coefficients are also supposed to be independent and identically distributed random variables. In this case, the wave kinetic theory predicts that the variance of the Fourier coefficients are well-approximated in the limit by the Wave Kinetic (WK) equation. For NLS, this equation is
\begin{equation}\tag{WK}
\partial_tn=\CT(n)
\end{equation}
with the kinetic operator
\begin{equation*}
\CT_k(n)=\int_{\substack{k=k_1-k_2+k_3\\\Delta\omega_{kk_1k_2k_3}=0}}n(k)n(k_1)n(k_2)n(k_3)\left(\frac{1}{n(k)}-\frac{1}{n(k_1)}+\frac{1}{n(k_2)}-\frac{1}{n(k_3)}\right)\drm k_1\drm k_2\drm k_3, 
\end{equation*}
where $\Delta\omega_{kk_1k_2k_3}=\omega_k-\omega_{k_1}+\omega_{k_2}-\omega_{k_3}$ is the relation of resonance. In particular, the first term corresponds to the operator $\CR$ from the CR equation. One is also interested in the propagation of chaos, that is to understand if the independence of the Fourier coefficients for the initial data is conserved. A rigorous comprehension of this phenomena is a hard question and very recent progress were made by Deng and Hani, see \cite{DH1,DH2,DH3,DH4} and references therein. They consider the expansion with respect to the nonlinearity up to arbitrary order where the coefficients associated to $\lambda^n$ are given by $(2n+1)$-linear functionnals of the initial data with a tree-like structure. Taking the variance yields an expansion where the coefficient are given by Feynman diagrams using the RP or RPA assumption and they are able to identify the important terms in the asymptotic behavior. It is given by
\begin{equation*}
\sum_{K=K_1-K_2+K_3}n_Kn_{K_1}n_{K_2}n_{K_3}\left(\frac{1}{n_K}-\frac{1}{n_{K_1}}+\frac{1}{n_{K_2}}-\frac{1}{n_{K_3}}\right)\left|\frac{\sin(\pi t\Delta\omega_{KK_1K_2K_3})}{\pi t\Delta\omega_{KK_1K_2K_3}}\right|^2
\end{equation*}
which is a convergent Riemann sum for $t\ll L^2$ in dimension $d\ge3$ using number theoretic results. Then this localizes on the resonant manifold because of the time-dependent term for $t\gg1$ giving the wave kinetic operator $\CT$. While this is the effect of quasi-resonances as the main part comes from $(K_1,K_2,K_3)\in\Z_L^2$ such that $|\Delta\omega_{KK_1K_2K_3K}|$ is small, the result from Faou, Germain and Hani for deterministic initial data is a consequence of analysis of the exact resonances, that is $\Delta\omega_{KK_1K_2K_3}=0$ on a discrete lattice in the continuous limit. Finally, these results are not global in time and occur in a timeframe depending on the parameters.

\medskip


The theory of wave turbulence can be used to describe different models such as waves in the ocean and many experiments are made in order to observe similar behaviours. In particular, the system is always in a finite box which has to be large depending on the usual scale of the nonlinear interaction under observation. This explains why the limit kinetic models are posed on functions depending on a continuous set of frequencies. Motivated by this remark, we consider in this work the spatial localization of periodic functions with large period as initial data, that we embed into an equation with continuous spectrum. We consider the two-dimensional cubic Schr\"odinger equation 
\begin{equation}\label{NLS}\tag{NLS}
i\partial_tu=- \Delta u+|u|^2u
\end{equation}
on the full space $\R^2$. We then propose a new family of initial data and observe a similar behavior compared to the previous models set on large tori, coming from the quasi-resonances for time $t\ll L^2$ and exact resonances for $t\gg L^2$. For deterministic data, the kinetic operator is $\CR$ from the CR equation while for initial data with random phase, the variance is described by $\CT$ from the WK equation. Hence  these operators appear naturally in the time expansion of the scattering evolution operator.

\medskip 

With $f_k=\widehat{f}(k)$ the Fourier transform of $f$ evaluated at $k\in\IR^2$, the equation \eqref{NLS} is written
\begin{equation*}
i \partial_t u_k = \omega_k u_k + \int_{k = \ell - m + j} u_\ell \widebar u_m u_j, 
\end{equation*}
where $\omega_k := k_1^2 + k_2^2$ for $k = (k_1,k_2) \in \R^2$. One can consider $v(t)=e^{-it\Delta}u(t)$ which satisfies the equation
\begin{equation*}
i\partial_tv_k=\int_{k=\ell-m+j}e^{-it\Delta\omega_{k\ell mj}}v_\ell \bar v_m v_j
\end{equation*}
with
\begin{equation*}
\Delta\omega_{k\ell mj}=\omega_{k}+\omega_{m}-\omega_\ell-\omega_{j}=|k|^2+|m|^2-|\ell|^2-|j|^2.
\end{equation*}
The solution $u$ is recovered from $v$ with $u_k(t)=e^{-it\omega_k}v_k(t)$ and this motivates the study of the trilinear operator 
\begin{equation}
\label{trilin}
R_k(t,u,v,w):=\int_{k=\ell-m+j}e^{it \Delta\omega_{k\ell mj}}u_\ell \bar v_m w_j.
\end{equation}
The co-area formula states that 
\begin{equation*}
R_k(t,u,v,w)=\int_{\R}e^{it \xi}\left(\int_{S_{k}(\xi)}u_\ell\widebar v_m w_j\,\dd S_{k}(\xi)\right)\dd\xi
\end{equation*}
where for $\xi\in\IR$, the set $S_k(\xi)$ is given by
\begin{equation*}
S_{k}(\xi) = \big\{(j,\ell,m)\in(\R^2)^3\ ;\ k+m-\ell-j=0\quad\mbox{and}\quad\Delta\omega_{k\ell mj}=\xi\big\}
\end{equation*}
and $\dd S_k(\xi)$ is the associated {\em microcanonical} measure. For regular functions $u,v$ and $w$, we have 
\begin{equation*}
\widehat R_k (\xi,u,v,w) = 2\pi \int_{S_{k}(\xi)} u_\ell\widebar v_m w_j \, \dd S_{k}(\xi)
\end{equation*}
which has regularity in $\xi$ related to the decreasing properties of $t\mapsto R_k(t,u,v,w)$. As proved in proposition \ref{LemCR}, the resonant case $\xi=0$ corresponds to the trilinear CR operator introduced by Faou, Germain and Hani in \cite{fgh}, that is
\begin{equation*}
\int_{S_{k}(0)} 
u_\ell \widebar v_m w_j \, \dd S_{k}(0)  = \frac12 \int_{\R} \int_{\R^2}
u_{k + a}\widebar v_{k + \lambda a^\perp} w_{k + a + \lambda a^\perp} \dd \lambda \dd a
= \mathcal{T}_k(u,v,w).
\end{equation*}
This is also the same microcanonical measure as introduced by Dymov and Kuksin in \cite{dymovkuksin1} up to a multiplicative factor. With this approach, the regularity of the resonant manifold appears to be related to the scattering properties of the equation.

\medskip

In the case of \eqref{NLS}, this is well-known and the solution scatters at $t = \pm \infty$, that is the solution $v(t) = e^{- i t \Delta} u(t)$ has  limits $v_{\pm \infty}$ when $t \to \pm \infty$, for initial data $u(0)=v(0)$ in the space 
\begin{equation*}
\Sigma=\big\{\varphi(x)\in H^1(\R^2)\ ;\ |x|\varphi(x)\in L^2(\R^2)\big\}.
\end{equation*}
This holds for any $\varphi \in \Sigma$ while $\Norm{\varphi}{\Sigma}$ has to be small enough in the focusing case, that is with the other sign in front of the non linearity. The main argument for \eqref{NLS} lies on a pseudoconformal conservation law argument, see  Ginibre and Velo \cite{GV1}, Tsutsumi \cite{tsutsumi} or Cazenave \cite[Theorem 7.2.1]{cazenave}. As a consequence, we can study the application 
\begin{equation*}
u(0)  = \varphi\mapsto U(t,\varphi) = v(t)
\end{equation*}
which is well defined for $t \in \R$ and $\varphi \in \Sigma$. In the following, we consider the case where $\varphi$ is small in $\Sigma$, which is equivalent to having a small nonlinearity. In particular, the sign in front of the cubic term is not important here. Using the result of Carles and Gallagher \cite{CaGa09}, the application $U(t,\varphi)$ is analytic with respect to $\varphi\in\Sigma$ hence we can consider the expansion
\begin{equation*}
v_k(t)=\varphi_k+\sum_{n \geq 1} (-i)^nV_k^n(t)
\end{equation*}
where $\Norm{V^n(t)}{\Sigma} \leq C  \Norm{\varphi}{\Sigma}^{2n + 1}$ for all $n\in\N$ and $t\in\R$. The second order expansion is given by
\begin{equation*}
V_k^1(t) = \int_0^t R_k(s, \varphi,\varphi,\varphi) \dd s
\end{equation*}
and
\begin{equation*}
V_k^2(t) = 2\int_0^t \int_{0}^{s} R_k(s, \varphi,\varphi, R(s',\varphi,\varphi,\varphi)) \dd s' \dd s+\int_0^t \int_{0}^{s} R_k(s, \varphi, R(s',\varphi,\varphi,\varphi),\varphi) \dd s' \dd s.
\end{equation*}
The proof of this result is based on Strichartz estimates, and conservation laws of the Schrödinger equation. The link between the regularity of the resonant manifolds and the scattering effect is essentially expressed by dispersive estimates, as we illustrate in Section \ref{SectionScatt}.

\medskip

We propose a new family of initial data for a better understanding of wave turbulence. Given two parameters $h,L>0$, we consider
\begin{equation*}
\varphi(x)=\frac{1}{(2\pi)^2}e^{-\frac{1}{2}h^2|x|^2}\sum_{K\in\Z_L^2}\eta_Ke^{iK\cdot x}=e^{-\frac{1}{2}h^2|x|^2}F_L(x)
\end{equation*}
for $x\in\R^2$, that is essentially a $(2\pi L)$-periodic function $F_L$ embedded in $\Sigma$ by Gaussian truncation, assuming enough decay for $(\eta_K)_{K\in\Z_L^2}$. We are interested in the observation of a large number of large period hence in the limit $L\gg1$ and $hL\ll1$. Assuming that $\eta_K=\eta(K)$ with $\eta:\R^2\to\C$ a smooth decreasing function, our goal is to describe the asymptotic of the solution to \eqref{NLS} on a timescale depending on the parameters in the limit $L\gg1$ and $h\ll1$. Up to a multiplication by a small factor depending on $h$ and $L$, we can use the previous expansion with bounds uniform in time and describe $V_k^1(t)$ and $V_k^2(t)$ for such initial data. In frequency, we have
\begin{equation*}
\widehat{\varphi}(k)=\frac{1}{2\pi h^2}\sum_{K\in\Z_L^2}\eta_Ke^{-\frac{|k-K|^2}{2h^2}}
\end{equation*}
which converges ah $h$ goes to $0$ to the sum of Dirac
\begin{equation*}
\sum_{K\in\Z_L^2}\eta_K\delta_0(k-K).
\end{equation*}
To deal with almost Dirac functions from the limit $h\ll1$ and with the continuous limit $L\gg1$, we introduce another scale of observation $\sigma>0$ and for a function $v$ which is expected to be close to a $2\pi L$-periodic function, we define the {\em coarse grained} quantity in frequency
\begin{equation*}
\langle v\rangle_{K,\sigma} =   \int_{\R^2}e^{- \frac{1}{2 \sigma^2} |k- K|^2} \widehat v(k) \dd k
\end{equation*}
for $K\in\Z_L^2$. This scale of observation will in particular be taken such that $h\ll\sigma\ll\frac{1}{L}$, see \eqref{regime1} below for the precise scaling assumption on $(h,L,\sigma)$. Recall that
\begin{equation*}
\widehat R_k(\xi)=\int_{\substack{k=k_1-k_2+k_3\\\Delta\omega_{kk_1k_2k_3}=\xi}}\eta(k_1)\overline{\eta(k_2)}\eta(k_3)\dd k_1\dd k_2\dd k_3
\end{equation*}
for $k\in\IR^2$ and $\widehat R_k(0)=\CR_k(\eta)$.


\begin{theorem}\label{TheoremDeter}
Let $(h,L,\sigma)$ be in the asymptotic scaling \eqref{regime1}, $K\in\Z_L^2$, $u$ the solution to \eqref{NLS} with initial data $u(0)=\varepsilon\varphi$, $\delta>0$ and assume $\varepsilon\ll\frac{h^2}{L}$. Then $v(t)=e^{-it\Delta}u(t)$ satisfies
\begin{equation*}
\langle v(t)\rangle_{K,\sigma}=\frac{\sigma^2\varepsilon}{\sigma^2+h^2}\eta(K)-i\pi\frac{\varepsilon^3L^4}{(2\pi)^4}\CR_K(\eta)+\frac{\varepsilon^3L^4}{(2\pi)^4}\int_\R\frac{\widehat R_K(\xi)-\CR_K(\eta)}{\xi}\dd\xi+o(\varepsilon^3L^4)
\end{equation*}
for $L^\delta\le t\le L^{1-\delta}$. For $L^{2+\delta}\le t\le\frac{1}{hL^\delta}$, we have 
\begin{equation*}
\langle v(t)\rangle_{K,\sigma}=\frac{\sigma^2\varepsilon}{\sigma^2+h^2}\eta(K)-i\pi\frac{2t\varepsilon^3L^2\log(L)}{\zeta(2)(2\pi)^4}\CR_K(\eta)+o\big(t\varepsilon^3L^2\log(L)\big).
\end{equation*} 
\end{theorem}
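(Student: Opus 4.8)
The plan is to insert the expansion $v_k(t)=\varepsilon\varphi_k+\sum_{n\ge1}(-i)^n\varepsilon^{2n+1}V_k^n(t)$ into the coarse grained bracket $\langle\cdot\rangle_{K,\sigma}$ and track the contributions order by order. The zeroth order term is purely Gaussian: $\langle\varepsilon\varphi\rangle_{K,\sigma}$ is a convolution of two Gaussians in frequency evaluated against $\sum_{K'}\eta_{K'}\delta$-like bumps of width $h$, which under $h\ll\sigma\ll 1/L$ collapses to $\frac{\sigma^2\varepsilon}{\sigma^2+h^2}\eta(K)$ with the cross terms $K'\neq K$ exponentially small. The size hypothesis $\varepsilon\ll h^2/L$ guarantees $\varepsilon\Norm{\varphi}{\Sigma}$ is small enough to apply the analytic expansion of $U(t,\cdot)$ with time-uniform bounds, so the tail $\sum_{n\ge2}$ is controlled by a geometric series in $\varepsilon^2\Norm{\varphi}{\Sigma}^2$; one must then check this tail is $o(\varepsilon^3L^4)$ on $[L^\delta,L^{1-\delta}]$ and $o(t\varepsilon^3 L^2\log L)$ on the longer window, which follows from the explicit powers of $L$ and the Gaussian normalization in $\Norm{\varphi}{\Sigma}$.

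The heart of the argument is the first order term $\varepsilon^3 V_K^1(t)=\varepsilon^3\int_0^t R_K(s,\varphi,\varphi,\varphi)\,\dd s$. Using the co-area representation, $R_K(s,\varphi,\varphi,\varphi)=\int_\R e^{is\xi}\widehat R_K(\xi)\,\dd\xi$, so after coarse graining and integrating in $s$ one gets (up to the Gaussian smoothing factor, which in the regime \eqref{regime1} is $1+o(1)$ on the relevant $\xi$-scale) a quantity of the form $\int_\R \frac{e^{it\xi}-1}{i\xi}\,\widehat R_K^{(h)}(\xi)\,\dd\xi$, where $\widehat R_K^{(h)}$ is the $h$-smoothed resonant density and carries the factor $\frac{L^4}{(2\pi)^4}$ coming from the $(\eta_{K'})$ being a Riemann sum for $\eta$ with mesh $L^{-1}$ in three frequency variables constrained to a codimension-one manifold. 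Splitting $\frac{e^{it\xi}-1}{i\xi}=\frac{e^{it\xi}}{i\xi}-\frac{1}{i\xi}$ and using that $\widehat R_K(\xi)\to\CR_K(\eta)=\widehat R_K(0)$ as $\xi\to0$ (proposition \ref{LemCR}), the Sokhotski–Plemelj / stationary-phase mechanism gives, as $t\to\infty$, the limit $-i\pi\,\CR_K(\eta)$ from the symmetric part of $\frac{e^{it\xi}}{i\xi}$ acting near $\xi=0$, plus the principal value $\int_\R\frac{\widehat R_K(\xi)-\CR_K(\eta)}{\xi}\dd\xi$ from the regular remainder; the window $L^\delta\le t\le L^{1-\delta}$ is exactly the range where $t\to\infty$ is effective yet $t\widehat R_K$ has not yet seen the arithmetic (lattice) structure of the exact resonances. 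For the long window $L^{2+\delta}\le t\le 1/(hL^\delta)$, the continuous density $\widehat R_K$ must be replaced by its true discrete counterpart — the sum over $(K_1,K_2,K_3)\in(\Z_L^2)^3$ with $K=K_1-K_2+K_3$ — and one isolates the exact resonances $\Delta\omega=0$; counting lattice points on these quadrics on $\Z_L^2$ produces the arithmetic factor $\frac{2\log L}{\zeta(2)}$ (the standard $\sum 1/k^2$ / Gauss-circle count), the secular growth $t$ appears because the resonant set now genuinely contributes a nonzero measure, and the lower threshold $L^{2+\delta}$ is what makes $t\Delta\omega$ large off-resonance so the non-resonant terms average out.

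The main obstacle is the second of the two asymptotics: passing from the continuous co-area picture valid for $t\ll L^2$ to the genuinely discrete lattice sum for $t\gg L^2$, and extracting the precise constant $\frac{2\log L}{\zeta(2)(2\pi)^4}$. This requires (i) replacing $\varphi$ by its honest periodic model $F_L$ and controlling the Gaussian-truncation error up to times $1/(hL^\delta)$, which is where the upper bound on $t$ enters; (ii) a number-theoretic count of solutions to $|K|^2-|K_1|^2+|K_2|^2-|K_3|^2=0$ with $K_i\in\Z_L^2$ in a ball, together with an equidistribution estimate showing the non-resonant ($\Delta\omega\neq0$) terms contribute $o(t L^2\log L)$ after the $s$-integration produces Dirichlet-kernel type factors $\big|\tfrac{\sin(t\Delta\omega/2)}{\Delta\omega/2}\big|^2$-analogues; and (iii) verifying that the higher-order terms $V^n$, $n\ge2$, remain negligible on this longer timescale, which again reduces to the explicit $L$-power bookkeeping and the smallness $\varepsilon\ll h^2/L$. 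Steps (i) and (iii) are technical but routine given the cited scattering and Strichartz machinery; step (ii) is the real content and is where the "quasi-resonances for $t\ll L^2$, exact resonances for $t\gg L^2$" dichotomy is made rigorous.
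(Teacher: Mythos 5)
Your proposal is correct in outline and follows essentially the same route as the paper: Carles--Gallagher analytic expansion, explicit treatment of the first Picard iterate for the Gaussian wave-packet data, the dichotomy between quasi-resonances (continuous limit first, then Sokhotski--Plemelj localization at $\xi=0$, valid for $t\le L^{1-\delta}$ where the Riemann sum converges) and exact resonances (lattice-point counting on the quadric giving the $\tfrac{2L^2\log L}{\zeta(2)}$ factor and the $L^{4+\delta}$ bound on the non-resonant sum, both imported from \cite{fgh}). The only organizational difference is that the paper first establishes, uniformly in both time windows, that the coarse-grained first iterate equals the exact discrete lattice sum $\frac{1}{(2\pi)^4}\sum_{K=K_1-K_2+K_3}\eta_{K_1}\overline{\eta_{K_2}}\eta_{K_3}\frac{1-e^{-it\Delta\omega}}{i\Delta\omega}$ up to a quantified remainder (Proposition \ref{PropExpansion1}, via the explicit Gaussian overlap integrals and an integration by parts in $s$ for the non-resonant terms), and only then specializes to each regime, whereas you pass to the smoothed continuous density directly; this is the same computation packaged differently.
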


\medskip

Using the previous expansion for times $t\ll\frac{1}{h}$, we prove that the solution $v$ is described by the discrete sum
\begin{equation*}
\langle v(t)\rangle_{K,\sigma}=v_K(0)-i\frac{\varepsilon^3}{(2\pi)^4}\sum_{K=K_1-K_2+K_3}\eta(K_1)\overline{\eta(K_2)}\eta(K_3)\frac{1-e^{-it\Delta\omega_{KK_1K_2K_3}}}{i\Delta\omega_{KK_1K_2K_3}}+r_K(t)
\end{equation*}
with $K_1,K_2,K_3\in\Z_L^2$ and $r_K(t)$ a small remainder. As $L$ goes to infinity, the lattice $\Z_L^2$ becomes more and more refined and this is a convergent Riemann sum, a priori only for $t\ll L$ due to the oscillating term $e^{it\Delta\omega}$. In this case, it is equivalent to
\begin{equation*}
L^4\int_{K=k_1-k_2+k_3}\eta(k_1)\overline{\eta(k_2)}\eta(k_3)\frac{1-e^{-it\Delta\omega_{kk_1k_2k_3}}}{i\Delta\omega_{kk_1k_2k_3}}\drm k_1\drm k_2\drm k_3=L^4\int_\R\frac{1-e^{-it\xi}}{i\xi}\widehat R_K(\xi)\dd\xi
\end{equation*}
Finally, this localizes on the resonant manifold for large time due to the oscillating factor with
\begin{equation*}
\lim_{t\to\infty}\int_\R\frac{1-e^{-it\xi}}{i\xi}\widehat R_K(\xi)\dd\xi=\pi \widehat R_K(0)+\int_\R\frac{\widehat R_K(\xi)-\widehat R_K(0)}{i\xi}\dd\xi.
\end{equation*}
For $L\le t\le L^{d-\delta}$ in dimension $d\ge3$, the convergence follows from number theoretic results and was obtained by Buckmaster, Germain, Hani and Shatah in \cite{BGHS}. To the best of our knowledge, this is still an open question for $d=2$. For any time $t\in\R$, the nonresonant sum is bounded by
\begin{equation*}
\left|\sum_{\substack{K=K_1-K_2+K_3\\\Delta\omega_{KK_1K_2K_3}\neq 0}}\eta(K_1)\overline{\eta(K_2)}\eta(K_3)\frac{1-e^{-it\Delta\omega_{KK_1K_2K_3}}}{i\Delta\omega_{KK_1K_2K_3}}\right|\le CL^{4+\delta}
\end{equation*}
for any $\delta>0$ as proved by Faou, Germain and Hani \cite{fgh} while the resonant sum converges to $\CR$ with
\begin{equation*}
\sum_{\substack{K=K_1-K_2+K_3\\\Delta\omega_{KK_1K_2K_3}=0}}\eta(K_1)\overline{\eta(K_2)}\eta(K_3)\frac{1-e^{-it\Delta\omega_{KK_1K_2K_3}}}{i\Delta\omega_{KK_1K_2K_3}}\simeq \frac{2tL^2\log(L)}{\zeta(2)}\CR_K(\eta)
\end{equation*}
which dominates as soon as $t\ge L^{2+\delta}$. Using a naive bound instead of their result, one has
\begin{equation*}
\left|\sum_{\substack{K=K_1-K_2+K_3\\\Delta\omega_{KK_1K_2K_3}\neq 0}}\eta(K_1)\overline{\eta(K_2)}\eta(K_3)\frac{1-e^{-it\Delta\omega_{KK_1K_2K_3}}}{i\Delta\omega_{KK_1K_2K_3}}\right|\le CL^5
\end{equation*}
and hence, that the resonant sum dominates for $t\ge L^3$. In the end, the quasi-resonances dominate for times $1\ll t\ll L^2$ where one obtains first a continuous limit which then a localization on the resonant manifold. For times $L^2\ll t\ll\frac{1}{h}$, the resonances dominate with first a localization on the discrete resonant manifold which then converges to the $\CR$ operator. The parameter $h$ corresponds to the spatial truncation and can be taken arbitrary small.

\medskip

In order to recover the wave kinetic operator, we then make the assumption of random phase (RP). We consider the randomization of the initial data
\begin{equation*}
\varphi_\theta(x)=\frac{1}{(2\pi)^2}e^{-\frac{1}{2}h^2|x|^2}\sum_{K\in\Z_L^2}\eta_Ke^{i\theta_K}e^{iK\cdot x}
\end{equation*}
where $(\theta_K)_{K\in\Z_L^2}$ are independent and identically distributed uniform random variables in $[0,2\pi]$. One could also consider the random phase and amplitude assumption (RPA) with Gaussian random variables instead of uniform random variables on the circle. Recall that
\begin{equation*}
\CT_k(n)=\int_{\substack{k=k_1-k_2+k_3\\\Delta\omega_{kk_1k_2k_3}=0}}n(k)n(k_1)n(k_2)n(k_3)\left(\frac{1}{n(k)}-\frac{1}{n(k_1)}+\frac{1}{n(k_2)}-\frac{1}{n(k_3)}\right)\drm k_1\drm k_2\drm k_3
\end{equation*}
for $k\in\R^2$.

\begin{theorem}\label{TheoremRandom}
Let $(h,L,\sigma)$ be in the asymptotic scaling \eqref{regime1}, $K\in\Z_L^2$, $u$ the solution to \eqref{NLS} with random initial data $u(0)=\varepsilon\varphi_\theta$, $\delta>0$ and assume $\varepsilon\ll\frac{h^2}{L}$. Then $v(t)=e^{-it\Delta}u(t)$ satisfies
\begin{equation*}
\IE\big[|\langle v(t)\rangle_{K,\sigma}|^2\big]=\frac{\sigma^4\varepsilon^2}{(\sigma^2+h^2)^2}|\eta(K)|^2+\varepsilon^4E_1(t,\eta)+\frac{t\varepsilon^6L^4}{(2\pi)^8}\CT_K(\eta)+o(t\varepsilon^5L^4)
\end{equation*}
for $L^\delta\le t\le L^{1-\delta}$. For $L^{2+\delta}\le t\le\frac{1}{hL^\delta}$, we have 
\begin{equation*}
\IE\big[|\langle v(t)\rangle_{K,\sigma}|^2\big]=\frac{\sigma^4\varepsilon^2}{(\sigma^2+h^2)^2}|\eta(K)|^2+\varepsilon^4E_1(t,\eta)+\frac{2t^2\varepsilon^6L^2\log(L)}{\zeta(2)(2\pi)^8}\CT_K(\eta)+o\big(t^2\varepsilon^5L^2\log(L)\big).
\end{equation*}
Moreover, we have
\begin{equation*}
\IE\Big[\langle v(t)\rangle_{K,\sigma}\overline{\langle v(t)\rangle_{K',\sigma}}\Big]=o\big(\varepsilon^4L^4+t\varepsilon^4L^2\log(L)\big)
\end{equation*}
for $K\neq K'$.
\end{theorem}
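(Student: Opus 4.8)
The plan is to reuse the analytic expansion already exploited for Theorem~\ref{TheoremDeter}. Writing $v(t)=U(t,\varepsilon\varphi_\theta)$, one has
\[
\langle v(t)\rangle_{K,\sigma}=\varepsilon\langle\varphi_\theta\rangle_{K,\sigma}+\sum_{n\ge1}(-i)^n\varepsilon^{2n+1}\langle V^n(t)\rangle_{K,\sigma},
\]
where $V^n$ is $(2n+1)$-linear in $\varphi_\theta$ with $\Norm{V^n(t)}{\Sigma}\le C\Norm{\varphi_\theta}{\Sigma}^{2n+1}$ uniformly in $t$. Squaring the modulus and separating the terms up to order $\varepsilon^6$ gives
\[
|\langle v(t)\rangle_{K,\sigma}|^2=\varepsilon^2|\langle\varphi_\theta\rangle_{K,\sigma}|^2+2\Re\big(i\varepsilon^4\langle\varphi_\theta\rangle_{K,\sigma}\overline{\langle V^1(t)\rangle_{K,\sigma}}\big)+\varepsilon^6|\langle V^1(t)\rangle_{K,\sigma}|^2-2\Re\big(\varepsilon^6\langle\varphi_\theta\rangle_{K,\sigma}\overline{\langle V^2(t)\rangle_{K,\sigma}}\big)+\mathcal{E},
\]
where $\mathcal{E}$ collects the $\varepsilon^k$-terms with $k\ge8$; each of these involves at least one $V^n$ and, through the uniform $\Sigma$-bounds, the coarse-graining estimates of Theorem~\ref{TheoremDeter} and the scaling \eqref{regime1} (in particular $\varepsilon\ll h^2/L$ and the upper cap on $t$), is absorbed in the stated remainder. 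The coarse-graining computations of Theorem~\ref{TheoremDeter} also turn $\langle\varphi_\theta\rangle_{K,\sigma}$ into $\frac{\sigma^2}{\sigma^2+h^2}\eta(K)e^{i\theta_K}$ up to negligible errors and each $\langle V^n(t)\rangle_{K,\sigma}$ into a discrete sum over $\Z_L^2$ carrying a phase factor $e^{i(\theta_{K_1}-\theta_{K_2}+\cdots)}$ and an iterated time factor built from $\frac{1-e^{-is\Delta\omega}}{i\Delta\omega}$, so that the problem reduces to computing the expectation of the four displayed terms.

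Taking the expectation over the i.i.d.\ uniform phases is a finite Wick expansion: $\IE[e^{i\sum_K m_K\theta_K}]$ equals $1$ when all $m_K$ vanish and $0$ otherwise, so each term localizes onto the frequency configurations for which the signed multiset of phase-carrying indices is balanced; together with the momentum constraint $K=K_1-K_2+K_3$ this selects finitely many families of diagrams of the type classified by Deng and Hani. The $\varepsilon^2$ term is deterministic and equals $\frac{\sigma^4}{(\sigma^2+h^2)^2}|\eta(K)|^2$. In the $\varepsilon^4$ term only the degenerate configurations $(K_1=K,\,K_3=K_2)$ and $(K_1=K_2,\,K_3=K)$ survive, and collecting them is the definition of $\varepsilon^4E_1(t,\eta)$. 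For the two $\varepsilon^6$ terms, the non-degenerate diagonal pairing of $|\langle V^1(t)\rangle_{K,\sigma}|^2$ produces $|\eta(K_1)|^2|\eta(K_2)|^2|\eta(K_3)|^2$ weighted by $\big|\tfrac{1-e^{-it\Delta\omega_{KK_1K_2K_3}}}{i\Delta\omega_{KK_1K_2K_3}}\big|^2$, while the admissible contractions of $\IE\big(\langle\varphi_\theta\rangle_{K,\sigma}\overline{\langle V^2(t)\rangle_{K,\sigma}}\big)$ produce, through the self-loop (bubble) contraction, the three terms obtained by replacing one $|\eta(K_i)|^2$ with $|\eta(K)|^2$; combining the four (the standard gain/loss assembly of the wave-kinetic derivation, with its combinatorial factors matching the time weights) reproduces the sign pattern of the identity $n(k)n(k_1)n(k_2)n(k_3)\big(\tfrac1{n(k)}-\tfrac1{n(k_1)}+\tfrac1{n(k_2)}-\tfrac1{n(k_3)}\big)$, i.e.\ the discrete pre-limit of $\CT_K(\eta)$ with a single remaining time weight.

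It then remains to take the asymptotics of this lattice sum in the two time windows, which splits exactly as in Theorem~\ref{TheoremDeter}. For $L^\delta\le t\le L^{1-\delta}$ one has $t\ll L^2$, so the sum is a convergent Riemann sum, equivalent to $\varepsilon^6L^4\int_\R\big|\tfrac{1-e^{-it\xi}}{i\xi}\big|^2\Psi_K(\xi)\,\dd\xi$ for the microcanonical density $\Psi_K$ attached to $\CT$; since $\big|\tfrac{1-e^{-it\xi}}{i\xi}\big|^2=\tfrac{4\sin^2(t\xi/2)}{\xi^2}$ concentrates as $2\pi t\,\delta_0$ when $t\to\infty$, collecting the Fourier normalisations yields the term $\frac{t\varepsilon^6L^4}{(2\pi)^8}\CT_K(\eta)$. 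For $L^{2+\delta}\le t\le\frac1{hL^\delta}$ the exact lattice resonances $\Delta\omega_{KK_1K_2K_3}=0$ dominate — there the time weight equals $t^2$ — with the quasi-resonant part controlled by the Faou--Germain--Hani bound $\le CL^{4+\delta}$ against the resonant size $\sim t^2L^4\log^2L$, and the number-theoretic count of exact resonances on $\Z_L^2$, recorded in the excerpt as $\simeq\frac{2tL^2\log L}{\zeta(2)}\CR_K$, produces the term $\frac{2t^2\varepsilon^6L^2\log L}{\zeta(2)(2\pi)^8}\CT_K(\eta)$. The main obstacle is exactly this step: one must prove, uniformly over each window, that all the non-leading contributions — crossing diagrams, the higher iterates $V^n$ with $n\ge3$, the cross terms between different orders of the $\varepsilon$-expansion, and the cross terms between the exact-resonant and quasi-resonant parts of $|\langle V^1\rangle|^2$ and $\langle\varphi_\theta\rangle\overline{\langle V^2\rangle}$ — are smaller than the retained term; this is the bookkeeping of Theorem~\ref{TheoremDeter} with one extra power of $t$ coming from the modulus square, and together with the (here only quantitatively weaker, two-dimensional) number-theoretic input it is the bulk of the work.

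Finally, for the off-diagonal correlation $\IE\big[\langle v(t)\rangle_{K,\sigma}\overline{\langle v(t)\rangle_{K',\sigma}}\big]$ with $K\ne K'$ one runs the same expansion. The $\varepsilon^2$ term carries the factor $\IE[e^{i(\theta_K-\theta_{K'})}]=0$, and in each of the $\varepsilon^4$ and $\varepsilon^6$ terms every phase-balanced configuration producing a leading contribution forces, together with the two momentum constraints $K=K_1-K_2+K_3$ and $K'=K_1'-K_2'+K_3'$, the equality $K=K'$; hence only degenerate low-order configurations survive, and they are bounded by $o\big(\varepsilon^4L^4+t\varepsilon^4L^2\log L\big)$, which is the claimed decorrelation.
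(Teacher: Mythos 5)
Your proposal follows essentially the same route as the paper: the order-$\varepsilon^6$ expansion of $|\langle v\rangle_{K,\sigma}|^2$, the random-phase expectation selecting paired frequency configurations, the assembly of $\IE\big[|\langle V^1\rangle_{K,\sigma}|^2\big]$ and $\IE\big[\langle\varphi_\theta\rangle_{K,\sigma}\overline{\langle V^2\rangle}_{K,\sigma}\big]$ into the discrete pre-limit of $\CT_K$ following the computation of Buckmaster--Germain--Hani--Shatah, and then the two time-window asymptotics (Riemann sum for $t\le L^{1-\delta}$, exact-resonance count for $t\ge L^{2+\delta}$) exactly as in the deterministic theorem. The points you flag as remaining bookkeeping are the same ones the paper itself defers to \cite{BGHS} and \cite{fgh}, so the argument matches the paper's proof in both structure and level of detail.
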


The almost sure expansion
\begin{equation*}
\langle v(t)\rangle_{K,\sigma}=\frac{\sigma^2\varepsilon}{\sigma^2+h^2}\eta(K)e^{it\theta_K}-i\varepsilon^3V_K^1(t)-\varepsilon^5V_K^2(t)+\mathcal{O}(\varepsilon^7)
\end{equation*}
gives
\begin{align*}
|\langle v(t)\rangle_{K,\sigma}|^2&=\frac{\sigma^4\varepsilon^2}{(\sigma^2+h^2)^2}|\eta(K)|+\frac{2\sigma^2\varepsilon^4}{\sigma^2+h^2}\Im\big(\overline{\eta(K)}V_K^1(t)e^{-it\theta_K}\big)\\
&\quad+\varepsilon^6\Big(|V_K^1(t)|^2+\frac{2\sigma^2}{\sigma^2+h^2}\Im\big(\overline{\eta(K)}V_K^2(t)e^{-it\theta_K}\big)\Big)+\mathcal{O}(\varepsilon^8).
\end{align*}
The term $E_1(t,\eta)$ can be seen as the effect of a {\em normal form} transformation. In general, it still dominates the second order term in the expansion with respect to $\varepsilon$. However since the nonlinearity satisfies $g(\overline u)=\overline{g(u)}$, we have $E_1(t,\eta)=-E_1(-t,\eta)$ and
\begin{equation*}
\IE\big[|\langle v(t)\rangle_{K,\sigma}|^2\big]+\IE\big[|\langle v(-t)\rangle_{K,\sigma}|^2\big]=2\frac{\sigma^4\varepsilon^2}{(\sigma^2+h^2)}|\eta(K)|^2+2E_2(t,\eta)+\mathcal{O}(\varepsilon^8).
\end{equation*}
The second theorem then follows with the same kind of limit as for the first theorem. Similar computations yield the propagation of chaos.

\begin{remark}
Our result concern solutions to \eqref{NLS} with small initial data. Considering $U(t)=\frac{1}{\varepsilon L^2}u(t)$ gives a solution to
\begin{equation*}
i\partial_tU=\Delta U+\varepsilon^2L^4|U|^2U
\end{equation*}
with initial data 
\begin{equation*}
U(0)=\frac{1}{L^2}\varphi=e^{-\frac{1}{2}h^2|x|^2}\frac{1}{(2\pi L)^2}\sum_{K\in\Z_L^2}\eta_Ke^{iKx}
\end{equation*}
which is the localization of a periodic function normalized in $L^2(\IT_L^2)$. Our result can be stated as an asymptotic for the spatial localization of initial data normalized in $L^2(\IT_L^2)$ for
\begin{equation*}
i\partial_tu=\Delta u+\lambda|u|^2u
\end{equation*}
with $\lambda=\varepsilon^2L^4\ll\frac{h^4}{L^2}L^4=h^4L^2$. The scaling $U(t)=\frac{h}{\varepsilon L}u(t)$ gives an initial data normalized in $L^2(\R^2)$ and a nonlinearity $\lambda=\frac{\varepsilon^2 L^2}{h^2}\ll h^2$.
\end{remark}

\medskip

In Section \ref{SectionScatt}, we illustrate the relation between scattering and the resonant manifold. We also gives the expansion that will be used here. In Section \ref{SectionInit}, we introduce our initial data and estimates its size. In Section \ref{SectionDeter}, we prove the asymptotic for deterministic initial data, that is theorem \ref{TheoremDeter}. In Section \ref{SectionRandom}, we prove the asymptotic for random initial data, that is theorem \ref{TheoremRandom}.

\medskip

We define the Fourier transformation
\begin{equation*}
f_k \equiv \widehat f(k) =  \int_{\mathbb{R}^2} f(x) e^{- i k \cdot x} \dd x, \quad f(x) = \frac{1}{(2\pi)^2} \int_{\mathbb{R}^2} f_k e^{i k \cdot x} \dd k
\end{equation*}
and we have the Plancherel idendity 
\begin{equation*}
\int_{\mathbb{R}^2} f(x) g(x) \dd x = \frac{1}{(2\pi)^2}  \int_{\mathbb{R}^2} \widehat f(k) \widehat g(k) \dd k. 
\end{equation*}

\medskip
\noindent {\bf
Acknowledgement.}
Both authors are supported by a
Simons Collaboration Grant on Wave Turbulence. The authors would also like to thank Sergei Kuksin for helpful discussions on the topic. 

\section{Scattering and resonant manifold}\label{SectionScatt}


In this section, we illustrate the relation between decay estimates piloting the scattering and the regularity of the resonant manifold. 
Let $(k,k_1,k_2,k_3) \in (\R^2)^4$ with $k=k_1-k_2+k_3$ and consider the change of variable $k_1=k+a$, $k_3=k+b$ which yields $k_2=k+a+b$. In this case, we have 
\begin{equation*}
\Delta\omega_{kk_1k_2k_3} = |k|^2 + |k+a+b|^2 - |k+a|^2 - |k +b|^2 = 2 a \cdot b
\end{equation*}
thus the operator $\eqref{trilin}$ writes
\begin{equation*}
R_k(t,u,v,w)  = \int_{(\R^2)^2} e^{i 2 t a \cdot b} u_{k + a}\widebar v_{k+b } w_{k + a + b} \dd a \dd b.  
\end{equation*}
If we define $H(a,b) = 2 a\cdot b$, then the co-area formula gives
\begin{equation*}
R_k(t,u,v,w)  = \int_{\R} e^{i  t \xi} \left(\int_{S_k(\xi)}u_{k + a}\widebar v_{k+b } w_{k + a + b}\dd S_k(\xi) \right) \dd \xi
\end{equation*}
with the microcanonical measure
\begin{equation*}
\dd S_k(\xi)=\frac{\dd a \dd b|_{S_k(\xi)}}{\Norm{\nabla H(a,b)}{}}, 
\end{equation*}
where $\dd a \dd b|_{S_k(\xi)}$ denotes the measure induced by the Euclidean measure on the manifold $S_k(\xi)$ embedded into $\R^4$, and $\Norm{\cdot}{}$ the Euclidean norm. The following proposition relates $R$ to the CR operator \eqref{CR}, justifying the notation 
\begin{equation*}
\CR_k(u,v,w)=\int_{\substack{k=k_1-k_2+k_3\\|k|^2=|k_1|^2-|k_2|^2+|k_3|^2}}u(k_1)\overline{v(k_2)}w(k_3)\dd k_1\dd k_2\dd k_3.
\end{equation*}

\begin{proposition}\label{LemCR}
For $k\in\R^2$, we have
$$
\int_{S_{k}(0)} 
u_{k_1} \widebar v_{k_2}  w_{k_3} \, \dd S_{k}(0)  = \frac12 \int_{\R} \int_{\R^2}
u_{k + a} \widebar v_{k + \lambda a^\perp}  w_{k + a + \lambda a^\perp} \dd \lambda \dd a
= \mathcal{R}_k(u,v,w).
$$
\end{proposition}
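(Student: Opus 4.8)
The plan is to parametrise the level set $S_k(0)$ explicitly, compute the two ingredients — the induced surface measure on $S_k(0)$ and the normalising factor $\Norm{\nabla H}{}$ — that enter the microcanonical measure $\dd S_k(0)=\dd a\,\dd b|_{S_k(0)}/\Norm{\nabla H}{}$, and then read off the explicit integral; the second equality is then a bare measure‑preserving change of variables, i.e.\ it is what the notation $\CR_k(u,v,w)$ stands for. First, with the shift $k_1=k+a$, $k_3=k+b$ (so $k_2=k+a+b$) one has $\Delta\omega_{kk_1k_2k_3}=2a\cdot b=H(a,b)$, hence $S_k(0)=\{(a,b)\in(\R^2)^2\ :\ a\cdot b=0\}$, which does not depend on $k$. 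Away from the negligible set $\{a=0\}$, every point of $S_k(0)$ can be written uniquely as $(a,\lambda a^\perp)$ with $(a,\lambda)\in(\R^2\setminus\{0\})\times\R$, since $a^\perp$ spans the line orthogonal to $a$; this gives a global parametrisation $\Psi(a,\lambda)=(a,\lambda a^\perp)$.

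Next I would compute the induced surface measure. Differentiating, $\partial_{a_1}\Psi=(1,0,0,\lambda)$, $\partial_{a_2}\Psi=(0,1,-\lambda,0)$ and $\partial_\lambda\Psi=(0,0,-a_2,a_1)$ in $\R^4$ (using $\partial_{a_1}a^\perp=(0,1)$, $\partial_{a_2}a^\perp=(-1,0)$), and the Gram determinant of this triple works out to $(1+\lambda^2)|a|^2$, so
\[
\dd a\,\dd b|_{S_k(0)}=|a|\sqrt{1+\lambda^2}\,\dd a\,\dd\lambda .
\]
On the other hand $\nabla H(a,b)=(2b,2a)$, and on $S_k(0)$ one has $|b|=|\lambda|\,|a|$, so $\Norm{\nabla H(a,b)}{}=2\sqrt{|a|^2+|b|^2}=2|a|\sqrt{1+\lambda^2}$; therefore $\dd S_k(0)=\tfrac12\,\dd a\,\dd\lambda$ in the coordinates $(a,\lambda)$.

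Consequently, applying this to $g(a,b)=u_{k+a}\widebar v_{k+b}w_{k+a+b}$ and then exchanging the order of integration,
\[
\int_{S_k(0)}u_{k+a}\widebar v_{k+b}w_{k+a+b}\;\dd S_k(0)=\frac12\int_\R\int_{\R^2}u_{k+a}\widebar v_{k+\lambda a^\perp}w_{k+a+\lambda a^\perp}\,\dd a\,\dd\lambda ,
\]
which is the first identity. For the second, the map $(a,b)\mapsto(k_1,k_2,k_3)=(k+a,\,k+a+b,\,k+b)$ is a measure‑preserving bijection from $\{a\cdot b=0\}$ onto $\{k=k_1-k_2+k_3,\ |k|^2=|k_1|^2-|k_2|^2+|k_3|^2\}$ which transports $\dd S_k(0)$ to the microcanonical measure denoted $\dd k_1\dd k_2\dd k_3$ in the statement; so the middle expression equals $\CR_k(u,v,w)$ by the very definition of that notation, which is the claimed identification of the CR operator.

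The only step with genuine content is the Gram determinant: one must keep the cross terms generated by $\lambda\,\partial_a a^\perp$, but everything collapses to $(1+\lambda^2)|a|^2=\bigl(\tfrac12\Norm{\nabla H}{}\bigr)^2$ on $S_k(0)$, which is exactly what produces the factor $\tfrac12$. (Alternatively one avoids the determinant altogether: for each fixed $a\neq0$ use the orthonormal frame $(a/|a|,\,a^\perp/|a|)$ for the $b$‑variable, so that $H=2|a|\,\alpha$ becomes one of the coordinates and $\dd S_k(\xi)$ can be read off the co‑area identity stated above; at $\xi=0$ the rescaling of the remaining coordinate again yields the factor $\tfrac12$.) All other steps are routine changes of variables.
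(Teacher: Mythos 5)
Your proof is correct and follows essentially the same route as the paper: parametrise $S_k(0)$ by $\Psi(a,\lambda)=(a,\lambda a^\perp)$, compute the Gram determinant $(1+\lambda^2)|a|^2$ for the induced surface measure, and divide by $\Norm{\nabla H}{}=2|a|\sqrt{1+\lambda^2}$ to get $\dd S_k(0)=\tfrac12\,\dd a\,\dd\lambda$. The only (immaterial) difference is your sign convention for $a^\perp$, and you additionally spell out the evaluation of $\Norm{\nabla H}{}$ on the level set and the identification with $\CR_k$, both of which the paper leaves implicit.
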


\begin{proof}
The set $S_k(0)$ is parametrized by $(a,b) = (a,\lambda a^\perp)$ for $\lambda\in\R$ with $(a_1,a_2)^\perp:=(a_2,-a_1)$ and the microcanonical measure is given by
\begin{equation*}
\drm S_k(0)=\frac{\drm a\drm(\lambda a^\perp)}{2|a|\sqrt{1+\lambda^2}}.
\end{equation*}
We have to calculate the Jacobian of the application $\Psi(a,\lambda):=(a,\lambda a^\perp)\in\R^4$ for $(a,\lambda)\in\R^2\times\R$. We have 
\begin{align*}
\partial_{a_1} \Psi &= (1,0,0,-\lambda),\\
\partial_{a_2} \Psi &= (0,1,\lambda,0),\\
\partial_{\lambda} \Psi &= (0,0,a_1,a_2), 
\end{align*}
hence the metric matrix induced by the Euclidian space is 
\begin{equation*}
g
\begin{pmatrix}
1 + \lambda^2 & 0 & - \lambda a_2 \\
0 & 1 + \lambda^2 & \lambda a_1 \\
- \lambda a_2 &\lambda a_1 &|a|^2 \\
\end{pmatrix}
\end{equation*}
thus the volume form is 
\begin{equation*}
\sqrt{\det g}\ \dd a \dd \lambda =\sqrt{(1 + \lambda^2)^2 |a|^2 -(1 + \lambda^2)\lambda^2(a_1^2 + a_2)^2} \dd a \dd \lambda= |a| \sqrt{( 1+ \lambda^2)} \dd a \dd \lambda.
\end{equation*}
This shows that 
\begin{equation*}
\int_{S_{k}(0)} 
u_\ell \widebar v_m w_j \, \dd S_{k}(0) = \frac{1}{2} \int_{\R} \int_{\R^2}
u_{k + a} \widebar v_{k + \lambda a^\perp} w_{k + a + \lambda a^\perp} \dd \lambda \dd a.
\end{equation*}
and completes the proof.
\end{proof}

We now relate the regularity of the resonant manifold to the dispersive effects of the equations. The following result if of course far from being optimal however its proofs is somehow general and could be adapted to different frameworks. For the Schrödinger equation, optimality in terms of time and space dependence is given by Strichartz estimates as we will explain below. Let $X^{\beta,r}$ be the Banach space equipped with the norm
\begin{equation*}
\Norm{u}{X^{\beta,r}} = \sup_{\substack{\alpha = (\alpha_1,\alpha_2) \in \N^2 \\|\alpha_1| + |\alpha_2| \leq r}}\sup_{k \in \R^2} \langle k \rangle^\beta |\partial_{k}^\alpha \varphi_k |.
\end{equation*}
In particular, these spaces are natural in the context of wave turbulence, see for example \cite{fgh}.
The following proposition shows that for smooth functions $u,v,w$, the resonant manifold are smooth in $\xi$ in a weak sense.  

\begin{proposition}
Let $\beta > 2$ and $u,v,w\in X^{\beta,2}$. We have
\begin{equation*}
\Norm{R(t,u,v,w)}{X^{\beta,0}}\lesssim\frac{\langle\log t\rangle^2}{\langle t\rangle^2}\Norm{u}{X^{\beta,2}}\Norm{v}{X^{\beta,2}}\Norm{w}{X^{\beta,2}}
\end{equation*} 
for $t\in\R$. In particular, the application 
\begin{equation*}
\xi \mapsto  \int_{S_{k}(\xi)} u_\ell \widebar v_m w_j\dd S_{k}(\xi)
\end{equation*}
from $\R$ to $\C$ is of class $\mathcal{C}^\alpha$ for any $\alpha\in(0,1)$.
\end{proposition}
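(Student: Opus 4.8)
The plan is to prove the dispersive decay estimate $\Norm{R(t,u,v,w)}{X^{\beta,0}}\lesssim \langle\log t\rangle^2\langle t\rangle^{-2}\Norm{u}{X^{\beta,2}}\Norm{v}{X^{\beta,2}}\Norm{w}{X^{\beta,2}}$ directly from the oscillatory integral representation
\begin{equation*}
R_k(t,u,v,w)=\int_{(\R^2)^2}e^{2ita\cdot b}u_{k+a}\widebar v_{k+b}w_{k+a+b}\,\dd a\,\dd b,
\end{equation*}
and then deduce the claimed Hölder regularity of $\xi\mapsto\int_{S_k(\xi)}u_\ell\widebar v_m w_j\,\dd S_k(\xi)$ as an elementary consequence: since $\widehat R_k(\xi)=2\pi\int_{S_k(\xi)}u_\ell\widebar v_m w_j\,\dd S_k(\xi)$ is the inverse Fourier transform (in $t$) of $R_k(t,\cdot)$, the decay $|R_k(t,\cdot)|\lesssim \langle\log t\rangle^2\langle t\rangle^{-2}$ gives $R_k(\cdot)\in L^1(\R_t)$ with a logarithmic-weighted tail, and standard Fourier analysis then yields that $\widehat R_k$ is $\mathcal C^\alpha$ for every $\alpha\in(0,1)$ (the decay rate $\langle t\rangle^{-2}\langle\log t\rangle^2$ makes $\langle t\rangle^\alpha R_k(t,\cdot)\in L^1$ for all $\alpha<1$, which controls the Hölder seminorm of the transform via the usual $\||\tau|^\alpha\widehat{f}\|$-type bound applied on the Fourier side).

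First I would fix $k$ and write $R_k$ as a one-dimensional oscillatory integral in a well-chosen direction. Introduce the coordinates in which $a\cdot b$ becomes a hyperbolic form: setting $p=\frac12(a+b)$, $q=\frac12(a-b)$ we get $a\cdot b=|p|^2-|q|^2$, so
\begin{equation*}
R_k(t,u,v,w)=4\int_{(\R^2)^2}e^{2it(|p|^2-|q|^2)}\,\Phi_k(p,q)\,\dd p\,\dd q,\qquad \Phi_k(p,q):=u_{k+p+q}\widebar v_{k+p-q}w_{k+2p}.
\end{equation*}
Now each factor $e^{\pm 2it|\cdot|^2}$ is a free Schrödinger kernel in disguise: the two-dimensional stationary phase / van der Corput estimate gives $\big|\int_{\R^2}e^{2it|p|^2}g(p)\,\dd p\big|\lesssim \langle t\rangle^{-1}\big(\|g\|_{L^1}+\|D^2 g\|_{L^1}\big)$ when $g$ is $C^2$ with integrable second derivatives, and similarly for $q$. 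Applying this successively in $p$ and in $q$ — using that $u,v,w\in X^{\beta,2}$ with $\beta>2$ ensures that $\Phi_k$ and its derivatives up to order $2$ in each pair of variables are integrable, uniformly in $k$, with the product bound $\Norm{u}{X^{\beta,2}}\Norm{v}{X^{\beta,2}}\Norm{w}{X^{\beta,2}}$ — produces a factor $\langle t\rangle^{-1}$ from each variable, i.e. $\langle t\rangle^{-2}$ overall. The only subtlety producing the $\langle\log t\rangle$ factors is that the integrals $\int\langle k+p+q\rangle^{-\beta}\,\dd p$ and the mixed derivative terms generate, after one van der Corput step, integrands whose $L^1$ norm in the remaining variable is only logarithmically controlled (this is exactly the borderline $\beta>2$ behaviour of $\int_{\R^2}\langle x\rangle^{-\beta}\langle x-y\rangle^{-\beta}\,\dd x \sim \langle y\rangle^{-\beta}\log\langle y\rangle$-type convolution estimates); tracking these carefully through the two applications gives $\langle\log t\rangle^2$. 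For $|t|\le 1$ the bound is trivial from $\Norm{\cdot}{X^{\beta,2}}$-integrability, so one may assume $|t|\ge 2$ and work with $\langle\log t\rangle=\log|t|$ throughout.

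The main obstacle will be making the two successive van der Corput applications rigorous while keeping all constants uniform in $k$: after integrating in $p$, the resulting amplitude in $q$ involves second-order $p$-derivatives of $\Phi_k$, which by the product rule spread derivatives onto all three of $u,v,w$ evaluated at shifted, $q$-dependent arguments, and one must check that the $q$-integral of the worst such term (two derivatives on one factor, none on the others) is still finite with the stated logarithmic loss — this is where the hypothesis $\beta>2$ is used sharply and where the $\log$ appears. A clean way to organize this is to first prove an abstract lemma: if $G\in C^2(\R^2)$ with $\|G\|_{L^1}+\|\nabla^2 G\|_{L^1}<\infty$ then $|\int e^{it|x|^2}G\,\dd x|\lesssim\langle t\rangle^{-1}(\|G\|_{L^1}+\|\nabla^2 G\|_{L^1})$, apply it in $p$ with $G=\Phi_k(\cdot,q)$, bound $\|\Phi_k(\cdot,q)\|_{L^1_p}+\|\nabla_p^2\Phi_k(\cdot,q)\|_{L^1_p}$ by $C(q)\Norm{u}{X^{\beta,2}}\Norm{v}{X^{\beta,2}}\Norm{w}{X^{\beta,2}}$ with $C(q)\in L^1(\R^2_q)$ after a second van der Corput step in $q$ — and it is in controlling $\int C(q)\,\dd q$ and the analogous $q$-derivative terms that the two logarithms are produced. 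Once the decay estimate is in hand, the Hölder regularity statement is immediate: $\widehat R_k(\xi)=\frac{1}{2\pi}\int_\R R_k(t,u,v,w)e^{-it\xi}\,\dd t$ (up to the normalization in the co-area formula), and for $0<\alpha<1$,
\begin{equation*}
|\widehat R_k(\xi)-\widehat R_k(\xi')|\lesssim |\xi-\xi'|^\alpha\int_\R |t|^\alpha\,|R_k(t,u,v,w)|\,\dd t\lesssim |\xi-\xi'|^\alpha\int_\R \frac{\langle t\rangle^\alpha\langle\log t\rangle^2}{\langle t\rangle^2}\,\dd t\;\Norm{u}{X^{\beta,2}}\Norm{v}{X^{\beta,2}}\Norm{w}{X^{\beta,2}},
\end{equation*}
and the last integral converges precisely because $\alpha<1$, which completes the proof.
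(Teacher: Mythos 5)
Your overall architecture (prove the decay estimate for $R_k(t,u,v,w)$, then deduce the H\"older regularity of $\xi\mapsto\int_{S_k(\xi)}u_\ell\widebar v_m w_j\,\dd S_k(\xi)$ from the bound $|\widehat R_k(\xi)-\widehat R_k(\xi')|\lesssim|\xi-\xi'|^\alpha\int|t|^\alpha|R_k(t)|\,\dd t$) matches the paper, and that last step is fine. The mechanism you propose for the decay, however, has a gap that I do not think can be closed with the stated hypotheses. After the change of variables $p=\tfrac12(a+b)$, $q=\tfrac12(a-b)$, your plan is to apply a van der Corput estimate in $p$ and then again in $q$. But the second application cannot act on the absolute-value bound $C(q)\langle t\rangle^{-1}$ of the inner integral: to extract oscillation in $q$ from $\int e^{-2it|q|^2}\bigl(\int e^{2it|p|^2}\Phi_k(p,q)\,\dd p\bigr)\dd q$ you must control $\nabla_q^2$ of the inner oscillatory integral, and estimating that by your own lemma requires $\nabla_p^2\nabla_q^2\Phi_k\in L^1_p$. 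Since $\Phi_k(p,q)=u_{k+p+q}\widebar v_{k+p-q}w_{k+2p}$, the worst term is a fourth derivative of a single factor (e.g.\ $\partial_p^2\partial_q^2 u_{k+p+q}$), which $X^{\beta,2}$ does not control. The hyperbolic coordinates make every one of the four variables ``elliptic'' for the phase, so each gain of $\langle t\rangle^{-1}$ costs two derivatives of the amplitude, for four in total. The paper's proof avoids exactly this: keeping the bilinear phase $2ta\cdot b$ and integrating by parts only in $a_1$ and $a_2$ puts precisely $|\alpha_u|+|\alpha_w|=2$ derivatives on $u$ and $w$ (the factor $\widebar v_{k+b}$ is $a$-independent), at the price of the singular weight $\tfrac{1}{b_1b_2}$; that singularity is then excised by the cutoff $\chi(b/\delta)$ with $\delta=\langle t\rangle^{-1}$, the excised region contributing $O(\delta)$ and the complement contributing $\langle t\rangle^{-2}\langle\log\delta\rangle^2$. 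This is also where the logarithms actually come from: they are produced by $\int_{|b_i|>\delta}\tfrac{\dd b_i}{|b_i|}$, not by any borderline behaviour of the convolution $\int\langle x\rangle^{-\beta}\langle x-y\rangle^{-\beta}\dd x$, which for $\beta>2$ in $\R^2$ carries no logarithm (that borderline sits at $\beta=2$). A secondary issue is that your two-dimensional lemma $\bigl|\int e^{it|x|^2}G\bigr|\lesssim\langle t\rangle^{-1}(\|G\|_{L^1}+\|\nabla^2G\|_{L^1})$ is itself borderline: it amounts to $\|\widehat G\|_{L^1(\R^2)}\lesssim\|G\|_{L^1}+\|\nabla^2G\|_{L^1}$, and the weight $\langle\xi\rangle^{-2}$ just fails to be integrable in $\R^2$, so even this step needs either slightly more regularity or its own logarithmic correction. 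I would recommend abandoning the hyperbolic coordinates and working, as the paper does, with the bilinear phase and a cutoff near the degenerate set $\{b_1b_2=0\}$.
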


\begin{proof}
We have 
\begin{equation*}
\int_{S_{k}(\xi)} 
u_\ell \widebar v_m w_j \, \dd S_{k}(\xi) = \frac{1}{2\pi} \widehat R_k (\xi,u,v,w)
=  \frac{1}{2\pi} \int_{\R^2} e^{- i \xi t } R_k( t, u,v,w) \dd t
\end{equation*}
hence the regularity result will indeed follows from the bound
\begin{equation*}
\Norm{R(t,u,v,w)}{X^{\beta,0}}\le C\frac{\langle\log t\rangle^2}{\langle t\rangle^2}\Norm{u}{X^{\beta,2}}\Norm{v}{X^{\beta,2}}\Norm{w}{X^{\beta,2}}
\end{equation*}
for $t\in\R$. Let $(k,k_1,k_2,k_3) \in (\R^2)^4$ with $k=k_1-k_2+k_3$. Again, the change of variable $k_1=k+a$, $k_3=k+b$ yields $k_2=k+a+b$ and we have 
\begin{equation*}
\Delta\omega_{kk_1k_2k_3} = |k|^2 + |k+a+b|^2 - |k+a|^2 - |k +b|^2 = 2 a \cdot b
\end{equation*}
thus giving
\begin{equation*}
R_k(t,u,v,w)  = \int_{\R^2 \times \R^2} e^{2it a \cdot b} u_{k + a} \widebar v_{k+b } w_{k + a + b} \dd a \dd b.  
\end{equation*}
Let $\chi: \R^2 \to \R$ such that $\chi(z) \equiv 1$ for $\min(|z_2|,|z_2|) \leq  1$, $\chi(z) \equiv 0$ for $\min(|z_2|,|z_2|)  > 2$ and $\chi \in [0,1]$. We define 
\begin{equation*}
r_k(\delta,u,v,w) := \int_{\R^2 \times \R^2} e^{2i t a \cdot b} u_{k + a} \widebar v_{k+b } w_{k + a + b} \chi\Big( \frac{b}{\delta}\Big) \dd a \dd b.  
\end{equation*}
We first prove that for $\beta>2$, we have
\begin{equation}\tag{$\star$}
\label{brahms2}
\Norm{\langle k \rangle^\beta r_k(\delta,u,v,w) }{L^\infty}  \leq C_\beta \delta \Norm{\langle k \rangle^\beta  u(k)}{L^\infty}\Norm{\langle k \rangle^\beta v(k)}{L^\infty}\Norm{\langle k \rangle^\beta  w(k)}{L^\infty}
\end{equation}
for a constant $C_\beta$ uniform with respect to $\delta$.

\medskip

\noindent\underline{\it Proof of \eqref{brahms2}.} We have 
\begin{align*}
\Norm{\langle k \rangle^\beta r_k(\delta)}{L^\infty} &\leq C_s  \Norm{\langle k \rangle^\beta  u(k)}{L^\infty}\Norm{\langle k \rangle^\beta v(k)}{L^\infty}\Norm{\langle k \rangle^\beta  w(k)}{L^\infty}\nonumber \\
&\quad\times \left( 
\int_{\R^2\times\R^2}\frac{\langle k \rangle^\beta}{\langle k +a  \rangle^\beta\langle k +b  \rangle^\beta\langle k +a + b \rangle^\beta}\chi\Big( \frac{b}{\delta}\Big) \dd a \dd b \right)
\end{align*}
and using the fact that 
\begin{equation*}
\langle k \rangle^\beta \leq C_\beta \big( \langle k + a \rangle^\beta+ \langle k + b \rangle^\beta + \langle k +a +b \rangle^\beta\big),
\end{equation*}
the integral to be estimated can be divided into three terms
\begin{equation*}
\int_{\R^2\times\R^2}\left(\frac{1}{\langle k +b  \rangle^\beta\langle k +a + b \rangle^\beta}+\frac{1}{\langle k +a  \rangle^\beta \langle k +a + b \rangle^\beta}+\frac{1}{\langle k +a  \rangle^\beta \langle k +b \rangle^\beta }\right)\chi\Big( \frac{b}{\delta}\Big) \dd a \dd b.
\end{equation*}
As $\beta>1$, the first term can be bounded by 
\begin{align*}
\int_{\R^2}\frac{1}{\langle k +b\rangle^\beta} \chi\Big(\frac{b}{\delta}\Big)  \dd b &
\leq \int_{\R} \left(\int_{-2\delta}^{2\delta} \frac{1}{ ( 1 + |k_1 + b_1|^2 + |k_2 + b_2|^2)^{\frac{\beta}{2}}} \dd b_1\right)\dd b_2\\
&\quad+\int_{\R} \left(\int_{-2\delta}^{2\delta} \frac{1}{ ( 1 + |k_1 + b_1|^2 + |k_2 + b_2|^2)^{\frac{\beta}{2}}}  \dd b_2\right)\dd b_1\\
& \leq C\delta
\end{align*}
as well as
\begin{equation*}
\int_{\R^2}\frac{1}{\langle k+a+b\rangle^\beta}\dd a=\int_{\R^2}\frac{1}{\langle a\rangle^\beta}\dd a<\infty
\end{equation*}
since $\beta>2$. To bound the second term, we note that 
\begin{equation*}
\langle b \rangle \leq \langle k + a + b\rangle  \langle k + a \rangle
\end{equation*}
and thus for $\alpha > 1$ and $\beta - \alpha > 1$, 
\begin{align*}
\int_{\R^2\times\R^2}\frac{1}{\langle k +a  \rangle^\beta \langle k +a + b \rangle^\beta}\chi\Big( \frac{b}{\delta}\Big) \dd a \dd b & \leq \int_{\R^2\times\R^2}\frac{1}{\langle k +a  \rangle^{\beta - \alpha} \langle k +a + b \rangle^{\beta- \alpha} } \frac{1}{\langle b \rangle^\alpha}\chi\Big( \frac{b}{\delta}\Big) \dd a \dd b\\
& \leq \int_{\R^2} \frac{1}{\langle b \rangle^\alpha}\chi\Big( \frac{b}{\delta}\Big)  \dd b \\
&\leq C \delta, 
\end{align*}
such $\alpha$ exists since $\beta>2$. And finally, as $\beta > 2$, the last term can be bounded by 
\begin{equation*}
\int_{\R^2\times\R^2}\frac{1}{\langle k +a  \rangle^\beta \langle k +b \rangle^\beta }\chi\Big( \frac{b}{\delta}\Big) \dd a \dd b  \leq C\int_{\R^2}\frac{1}{\langle k +b \rangle^\beta }\chi\Big( \frac{b}{\delta}\Big)  \dd b \leq C \delta
\end{equation*}
and this proves \eqref{brahms2}. 

\medskip

In order to bound $R_{k}(t,u,v,w)$, we want to integrate by part the oscillatory term and gain powers of $t$, up to a lost of derivatives. We thus have to be sure that we can distribute a number of derivatives where we want. We can write formally after integrations by part
\begin{equation*}
\int_{(\R^2)^2} e^{i 2 t a \cdot b} u_{k + a} \widebar  v_{k+b }  w_{k + a + b} \dd a \dd b = \frac{1}{(2it)^2 }\int_{(\R^2)^2} e^{i 2 t a \cdot b}  \frac{1}{b_1 b_2} \partial_{a_1} \partial_{a_2}(u_{k + a} \widebar  v_{k+b }  w_{k + a + b}) \dd a \dd b.  
\end{equation*}
Essentially, under some decay assumptions on $u$, $v$ and $w$, and their derivatives, this term will be of order $\langle t \rangle^{-2}$ up to logarithmic singularities due to the singularities in $a_1 = 0$ and $a_2 = 0$. To make the estimate rigorous, we write 
\begin{align*}
\int_{\R^2 \times \R^2} e^{i 2 t a \cdot b} u_{k + a} \widebar v_{k+b }  w_{k + a + b} \dd a \dd b&=  \int_{\R^2 \times \R^2} e^{i 2 t a \cdot b} u_{k + a} \widebar v_{k+b }  w_{k + a + b}\chi\Big(\frac{b}{\delta}\Big) \dd a \dd b\\
&\quad+ \int_{\R^2 \times \R^2} e^{i 2 t a \cdot b}u_{k + a} \widebar v_{k+b }  w_{k + a + b} \Big( 1 - \chi\big(\frac{b}{\delta}\big) \Big) \dd a \dd b\\
&=: r_k(t,\delta) + R_k(t,\delta).
\end{align*}
The first term is bounded with \eqref{brahms2}, that is
\begin{equation*}
\Norm{\langle k \rangle^\beta r_k(t,\delta)  }{L^\infty} \leq C \delta \Norm{u}{X^{\beta,2}} \Norm{v}{X^{\beta,2}}\Norm{w}{X^{\beta,2}}. 
\end{equation*}
To estimate the second term, we perform the integration by part, and obtain 
\begin{equation*}
\frac{1}{(2it)^2 } \int_{\R^2 \times \R^2} e^{i 2 t a \cdot b} \frac{1}{b_1 b_2}  (\partial_a^{\alpha_u}u_{k + a}) \widebar v_{k+b } (\partial_a^{\alpha_w}  w_{k + a + b}) \Big( 1 - \chi\big(\frac{b}{\delta}\big)\Big) \dd a \dd b
\end{equation*}
with $|\alpha_u|+|\alpha_w|= 2$. This term can be estimated as for \eqref{brahms2} using
\begin{equation*}
|\partial_k^{\alpha_u}u_{k}| \leq \frac{1}{\langle k \rangle^{\beta}}\Norm{u}{X^{\beta,2}}\quad\mbox{and}\quad |\partial_k^{\alpha_w}w_{k}| \leq \frac{1}{\langle k \rangle^{\beta}}\Norm{w}{X^{\beta,2}},
\end{equation*}
except that the measure $\chi(\frac{b}{\delta})$ is replaced by the measure $\frac{1}{b_1 b_2} (1 - \chi(\frac{b}{\delta}))$. For example, we have
\begin{align*}
\left|\int_{\R^2}\frac{1}{\langle k +b\rangle^s} \frac{1}{b_1 b_2} \Big(1 - \chi\big(\frac{b}{\delta}\big) \Big)  \dd b\right|&\leq \int_{|b_1|>\delta}\int_{|b_2|>\delta} \frac{1}{|b_1| |b_2|( 1 + |k_1 + b_1|^2 + |k_2 + b_2|^2)^{\frac{\beta}{2}}} \dd b_1 \dd b_2\\
&\leq  C\langle\log \delta\rangle^2
\end{align*}
thus we obtain 
\begin{equation*}
\Norm{\langle k \rangle^\beta R_k(t,\delta)  }{L^\infty} \leq  \frac{C}{\langle t \rangle} \langle \log \delta\rangle \Norm{u}{X^{\beta,2}} \Norm{v}{X^{\beta,2}}\Norm{w}{X^{\beta,2}}. 
\end{equation*}
Taking $\delta = \frac{1}{\langle t\rangle}$ gives
\begin{equation*}
\Norm{\langle k \rangle^\beta ( r_k(t,\delta) + R_k(t,u,v,w))}{L^\infty}\le C\frac{\langle\log t\rangle^2}{\langle t\rangle^2}\Norm{u}{X^{\beta,2}}\Norm{v}{X^{\beta,2}}\Norm{w}{X^{\beta,2}}
\end{equation*}
and completes the proof. Indeed, the regularity $\mathcal{C}^\alpha$ for $\alpha<1$ comes for example from the Besov spaces, see Section $2.7$ in \cite{BCD}.
\end{proof}

This kind of result can be used as starting point to prove that the solution of \eqref{NLS} scatters at $t = \pm \infty$ for smooth initial data, that is $v(t)=e^{-it\Delta}u(t)$ with $u$ the solution of equation \eqref{NLS} has  limits $v_{\pm \infty}$ when $t \to \pm \infty$. Of course, it is know that the solution scatters under the general condition that $u(0) = v(0)$ is in the space 
$$
\Sigma = \{ \varphi(x) \in H^1(\R),\quad  |x| \varphi(x) \in L^2(\R^2)\}, 
$$
or equivalently that $|k| \varphi_k \in L^2(\R^2)$ and $\partial_k \varphi_k \in  L^2(\R^2)$. The result holds without restriction for any $\varphi \in \Sigma$, and for $\Norm{\varphi}{\Sigma}$ small enough in the focusing case. The main argument for \eqref{NLS} lies on a pseudoconformal conservation law argument. As a consequence, we can study the application 
$$
u(0)  = \varphi\mapsto U(t,\varphi) = v(t)
$$
which is well defined for $t \in \R$ and for $\varphi \in \Sigma$. In the following, we will consider the case where $\varphi$ is small in $\Sigma$. In particular, the sign in front of the non linearity is not important here. Using the result of Carles and Gallagher \cite{CaGa09}, the application $U(t,\varphi)$ is analytic, and by expanding this operator near zero, we obtain the following result.

\begin{theorem}[\normalfont Carles \& Gallagher \cite{CaGa09}] \label{TheoremCaGa}
There exists $\varepsilon_0>0$ such that for $\varphi \in \Sigma$ with $\Norm{\varphi}{\Sigma} \leq \varepsilon_0$, there exists a global solution  $v(t) =  U(t,\varphi)$ to the equation
\begin{equation*}
v_k(t) = \varphi_k -i  \int_{0}^t R_k\big(t,v(s),v(s),v(s)\big) \dd s, 
\end{equation*}
for which we have 
\begin{equation*}
\Norm{v(t)}{\Sigma}\leq 2 \Norm{\varphi}{\Sigma}
\end{equation*}
for all $t\in\R$ and the analytic expansion in $\Sigma$
\begin{equation*}
v_k(t)  = \varphi_k + \sum_{n \geq 3} (-i)^nV_k^n(t)
\end{equation*}
where 
\begin{equation*}
\forall\, n \in \N,  \forall\, t \in \R, \quad  \Norm{V^n(t)}{\Sigma} \leq C  \Norm{\varphi}{\Sigma}^{2n + 1}. 
\end{equation*}
Finally, we have 
\begin{align*}
V_k^1(t) = &\int_0^t R_k(s, \varphi,\varphi,\varphi) \dd s,\\
V_k^2(t) = &2\int_0^t \int_{0}^{s} R_k(s, \varphi,\varphi, R(s',\varphi,\varphi,\varphi)) \dd s'\dd s +\int_0^t \int_{0}^{s} R_k(s, \varphi, R(s',\varphi,\varphi,\varphi),\varphi) \dd s'\dd s.
\end{align*}
\end{theorem}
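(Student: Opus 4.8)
The statement is essentially that of Carles and Gallagher \cite{CaGa09}; my plan is to recall the mechanism behind it and then extract the first two coefficients of the expansion. The equation $i\partial_t u = -\Delta u + |u|^2 u$ is $L^2$-critical in dimension two, so the natural object is the Galilean vector field $J(t) = x + 2it\nabla$, which commutes with $i\partial_t + \Delta$, satisfies the Leibniz-type identity $J(t)\big(|u|^2u\big) = 2|u|^2 J(t)u - u^2\,\overline{J(t)u}$, and relates the two pictures through $\|xv(t)\|_{L^2} = \|J(t)u(t)\|_{L^2}$ since $v(t) = e^{-it\Delta}u(t)$. Because $\widehat v_k = e^{it|k|^2}\widehat u_k$, one has the equivalence $\|v(t)\|_\Sigma \simeq \|u(t)\|_{L^2} + \|\nabla u(t)\|_{L^2} + \|J(t)u(t)\|_{L^2}$, so controlling $\|v(t)\|_\Sigma$ amounts to controlling these three quantities uniformly in $t$.

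The first step would be local well-posedness in $\Sigma$ via a contraction argument in a Strichartz space adapted to the energy-subcritical cubic nonlinearity (admissible pairs $(q,r)$ with $\tfrac2q + \tfrac2r = 1$), propagating both $\nabla$ and $J(t)$ through the equation with the Leibniz identity above. Conservation of mass gives $\|u(t)\|_{L^2} = \|\varphi\|_{L^2}$; conservation of energy, together with a Gagliardo--Nirenberg inequality to absorb the potential term for small data whatever its sign, gives $\|\nabla u(t)\|_{L^2} \lesssim \|\varphi\|_\Sigma$; and the pseudoconformal identity --- which in the mass-critical case states that $\tfrac12\|J(t)u(t)\|_{L^2}^2 \pm \tfrac{t^2}{2}\|u(t)\|_{L^4}^4$ is exactly conserved --- gives, evaluated at $t=0$ where the $t^2$ term drops, the bound $\|J(t)u(t)\|_{L^2} \le \|x\varphi\|_{L^2}$ in the defocusing case, and the same bound up to a Gagliardo--Nirenberg correction absorbed by smallness in the focusing case. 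A continuity/bootstrap argument then upgrades this to $\|v(t)\|_\Sigma \le 2\|\varphi\|_\Sigma$ for all $t$. From the bound on $\|J(t)u(t)\|_{L^2}$ one obtains the dispersive decay $\|u(t)\|_{L^p} \lesssim |t|^{-(1 - 2/p)}\|J(t)u(t)\|_{L^2}^{\theta}\|u(t)\|_{L^2}^{1-\theta}$ with $\theta = 1 - 2/p$ for every $p \in (2,\infty)$, which makes the Duhamel term $\int_0^t e^{-is\Delta}(|u|^2u)(s)\,\dd s$ converge in $\Sigma$ as $t \to \pm\infty$ --- this is the scattering statement, the quantitative global-in-time counterpart of the pointwise decay of $R(t,\cdot,\cdot,\cdot)$ proved in the previous proposition, now run in a global Strichartz norm.

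Next I would establish the analyticity of $\varphi \mapsto U(t,\varphi)$: the fixed-point map $v \mapsto \varphi - i\int_0^t R(s,v(s),v(s),v(s))\,\dd s$ is affine in $\varphi$ and cubic in $v$, hence real-analytic jointly, and its linearization at the solution is invertible on the relevant Strichartz/$\Sigma$ space by the global smallness, so the analytic implicit function theorem applies. The Taylor coefficients $V^n(t)$ are then the $(2n+1)$-linear-in-$\varphi$ terms of the expansion; each solves a linear Schr\"odinger equation forced by products of the $V^j$ with $j < n$, and the uniform-in-time bound $\|V^n(t)\|_\Sigma \le C\|\varphi\|_\Sigma^{2n+1}$ is obtained by feeding the global Strichartz and dispersive-decay estimates through this iterated-Duhamel tree --- the same integrability in time that underlies scattering.

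Finally, identifying $V^1$ and $V^2$ is bookkeeping on the Picard iteration $v^{[0]} = \varphi$, $v^{[m+1]}(t) = \varphi - i\int_0^t R(s,v^{[m]}(s),v^{[m]}(s),v^{[m]}(s))\,\dd s$. The first iterate yields $V^1_k(t) = \int_0^t R_k(s,\varphi,\varphi,\varphi)\,\dd s$ directly. For the second, I would expand $R\big(s,v^{[1]}(s),v^{[1]}(s),v^{[1]}(s)\big)$ with $v^{[1]}(s) = \varphi - iV^1(s)$, using that $R_k(s,\cdot,\cdot,\cdot)$ is $\mathbb{C}$-linear in its first and third arguments and $\mathbb{C}$-antilinear in the middle one, retain only the terms quintic in $\varphi$ (exactly one slot carrying the correction $-iV^1(s)$), and use the symmetry $R_k(s,u,v,w) = R_k(s,w,v,u)$ --- which comes from relabelling $\ell \leftrightarrow j$ in \eqref{trilin}, leaving $\Delta\omega_{k\ell mj}$ invariant --- to merge the first- and third-slot contributions into a factor $2$; integrating in $s$, pushing the inner integral through the (anti)linear slots, and matching the coefficient of $(-i)^2$ produces the stated formula for $V^2$. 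The substantive obstacle is everything in the second and third paragraphs --- the global-in-time Strichartz/pseudoconformal theory in $\Sigma$ with constants sharp enough for the bootstrap, and its promotion to analyticity of the flow with uniform-in-time control of all the multilinear terms; the conservation laws, the pseudoconformal identity and the combinatorial extraction of $V^1,V^2$ are routine once that framework, which is exactly what \cite{CaGa09} provides, is in place.
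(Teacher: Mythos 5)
Your sketch is correct and follows exactly the route the paper indicates: the paper itself gives no proof of this statement but defers entirely to Carles--Gallagher \cite{CaGa09}, remarking only that the argument rests on Strichartz estimates, the conservation laws and the pseudoconformal law, which are precisely the ingredients you develop (global $\Sigma$-bounds via mass/energy/pseudoconformal conservation and the vector field $J(t)=x+2it\nabla$, analyticity of the flow map, and the Picard-iteration bookkeeping for $V^1,V^2$ using the $\ell\leftrightarrow j$ symmetry of $R$). Nothing further is needed beyond the citation.
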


As explained in the introduction, the proof of this result is based on Strichartz estimates, and conservation laws of the Schr\"odinger equation and this express the link between the regularity of the resonant manifolds and the scattering effect. Note that the result holds in fact not only near $0$, but around any solution of \eqref{NLS}.

\section{Localization of periodic initial data}\label{SectionInit}



For $\epsilon>0$ and $K\in\IR^2$, consider the functions
\begin{equation*}
g_{K,\epsilon} (x) := \frac{1}{(2\pi)^2}e^{i K \cdot x } e^{ - \frac{1}{2}\epsilon^2 |x|^2}\quad\text{and}\quad\widehat g_{K,\epsilon}(k) :=  \frac{1}{2 \pi \epsilon^2}e^{- \frac{1}{2\epsilon^2}|k - K|^2}
\end{equation*}
with $x,k\in\R^2$, see Lemma \ref{fouriertransformgaussians} for Gaussian calculations. With this normalization, $\widehat g_{K,\epsilon}$ is of integral one and converges to the Dirac distribution $\delta_0(\cdot-K)$ as $\epsilon$ goes to $0$. For $L>0$, set
\begin{equation*}
\Z_L^2 = \left\{ \left(\frac{n_1}{L}, \frac{n_2}{L}\right)\ ;\ n_1,n_2\in\Z\right\}
\end{equation*}
and let $\eta_K$ be the trace on $\Z_L^2$ of a smooth function $\eta: \R^2 \mapsto \C$. The reader should have in mind that $\epsilon$ will be small while $L$ will be large. For simplicity, we assume that $\eta$ is compactly supported on a domain $B \subset \R^2$ independent of $L$, this condition could be easily relaxed to some decay assumption with respect to $\langle K \rangle$ of Sobolev type. We consider the initial data
\begin{equation*}
\label{init}
\varphi(x):= \sum_{K\in\Z_L^2}\eta_K g_{K,h}(x) = \frac{1}{(2\pi)^2} e^{- \frac{1}{2} h^2 |x|^2}\sum_{K\in\Z_L^2}\eta_K e^{i K \cdot x}=: e^{- \frac{1}{2} h^2 |x|^2} F_L(x) \nonumber
\end{equation*}
which is essentially a periodic function $F_L$ with large period $2 \pi L$ embedded in $\Sigma$ by Gaussian truncation. Since $\eta$ is compactly supported, the set of indices $K\in B\cap\Z_L^2$ is bounded and
\begin{equation*}
\Norm{\partial_x^\alpha F_L}{L^\infty} \leq C_{\alpha}L^2 
\end{equation*}
for some constant $C_\alpha>0$ independent of $L$ and all $\alpha\in\N^2$. In frequency, this writes
\begin{equation*}
\varphi_k=\sum_{K\in\Z_L^2}\frac{\eta_K}{2\pi h^2}e^{-\frac{1}{2h^2}|k-K|}
\end{equation*}
for $k\in\R^2$. As $h$ goes to $0$, this converges to
\begin{equation*}
\sum_{K\in\Z_L^2}\eta_K\delta_0(k-K),
\end{equation*}
that is the Fourier transform of $F_L$ as a function on $\IR^2$ which is not in $\Sigma$ since it does not decrease at infinity. As $L$ goes to infinity, the lattice $\Z_L^2$ becomes more and more refined and converges to $\R^2$. To deal with almost Dirac functions and the limit $L \to \infty$, we introduce another scale of observation $\sigma>0$. For a function $v$ which is expected to be close to a $2 \pi L$-periodic function, we define the {\em coarse grained} quantity in frequency
\begin{equation*}
\langle v\rangle_{K,\sigma} :=   \int_{\R^2}e^{- \frac{1}{2 \sigma^2} |k- K|^2} \widehat v(k) \dd k = (2 \pi)^3 \sigma^2  \int_{\R^2}\overline{g_{K,\sigma}(x)}v(x) \dd x
\end{equation*}
for all $K\in\Z_L^2$. The initial data $\varphi$ is a sum of $g_{K,h}$ which converges in frequency to Dirac distributions as $h$ goes to $0$. In other works such as \cite{DH1,fgh}, the quantity that is controlled is the $L^\infty(\Z_L^2)$ distance between the discrete Fourier coefficient and the continuous limit. This coarse grained quantity is here a natural object for dealing with functions defined on a set with continuous spectrum. Note that in the end, this is equivalent since the test function is equal to $1$ at $K\in\Z_L^2$ while exponentially small at the other site of the lattice for $\sigma L\ll 1$. Indeed for fixed $\sigma$ and $L$ with $h$ going to $0$, we have
\begin{equation*}
\int_{\R^2}e^{- \frac{1}{2 \sigma^2} |k- K|^2} \widehat g_{K_1,h}(k)  \dd k \simeq e^{- \frac{1}{2 \sigma^2} |K_1- K|^2} = \left|
\begin{array}{l}
1 \quad \mbox{if}\quad K=K_1,\\[1ex]
\mathcal{O}( e^{- \frac{1}{2 (\sigma L)^2}})\quad\mbox{if}\quad K\neq K_1. 
\end{array}
\right.
\end{equation*}
Hence for $h\ll\sigma\ll\frac{1}{L}$, we get
\begin{equation*}
\langle\varphi\rangle_{K,\sigma}\simeq\eta(K)
\end{equation*}
in the limit and these coarse grained quantities will be the natural ones to study to deal both with the Dirac limit and the large period limit. The following proposition makes this statement more precise.

\begin{proposition}
There exists a constant $C>0$ such that
\begin{equation*}
\sup_{K\in\Z_L^2}\big|\langle\varphi\rangle_{K,\sigma}-\frac{\sigma^2}{\sigma^2+h^2}\eta(K)\big|\leq C\Norm{\eta}{L^\infty}L^2 e^{-\frac{1}{2 L^2 ( h^2 + \sigma^2)}},
\end{equation*}
which implies in particular
\begin{equation*}
\sup_{K\in\Z_L^2}\big|\langle\varphi\rangle_{K,\sigma}-\eta(K)\big|\leq C\Norm{\eta}{L^\infty}\Big(\frac{h^2}{\sigma^2+h^2} + L^2 e^{-\frac{1}{2 L^2 ( h^2 + \sigma^2)}}\Big).
\end{equation*}
\end{proposition}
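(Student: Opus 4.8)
The plan is to reduce the estimate to a single Gaussian convolution identity and then to control a lattice sum. First I would expand $\varphi=\sum_{K_1\in\Z_L^2}\eta_{K_1}g_{K_1,h}$ and use the linearity of $v\mapsto\langle v\rangle_{K,\sigma}$ to write
\[
\langle\varphi\rangle_{K,\sigma}=\sum_{K_1\in\Z_L^2}\eta_{K_1}\int_{\R^2}e^{-\frac{1}{2\sigma^2}|k-K|^2}\,\widehat g_{K_1,h}(k)\,\dd k.
\]
Then I would evaluate the integral with the Gaussian calculus of Lemma~\ref{fouriertransformgaussians}: completing the square in the product of $e^{-\frac{1}{2\sigma^2}|k-K|^2}$ and $\frac{1}{2\pi h^2}e^{-\frac{1}{2h^2}|k-K_1|^2}$ and integrating in $k\in\R^2$ gives
\[
\int_{\R^2}e^{-\frac{1}{2\sigma^2}|k-K|^2}\,\widehat g_{K_1,h}(k)\,\dd k=\frac{\sigma^2}{\sigma^2+h^2}\,e^{-\frac{|K-K_1|^2}{2(\sigma^2+h^2)}},
\]
so that $\langle\varphi\rangle_{K,\sigma}=\frac{\sigma^2}{\sigma^2+h^2}\sum_{K_1\in\Z_L^2}\eta_{K_1}e^{-\frac{|K-K_1|^2}{2(\sigma^2+h^2)}}$ and the $K_1=K$ term contributes exactly $\frac{\sigma^2}{\sigma^2+h^2}\eta(K)$.

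Next I would separate this diagonal term and bound the remainder. Since two distinct points of $\Z_L^2$ are at distance at least $\frac1L$, every $K_1\neq K$ satisfies $|K-K_1|^2\ge\frac1{L^2}$, hence $e^{-\frac{|K-K_1|^2}{2(\sigma^2+h^2)}}\le e^{-\frac{1}{2L^2(\sigma^2+h^2)}}$. As $\eta$ is supported in the fixed bounded set $B$, only $O(L^2)$ indices contribute to the sum, so
\[
\Big|\langle\varphi\rangle_{K,\sigma}-\frac{\sigma^2}{\sigma^2+h^2}\eta(K)\Big|\le\Norm{\eta}{L^\infty}\!\!\sum_{\substack{K_1\in B\cap\Z_L^2\\K_1\neq K}}\!\!e^{-\frac{|K-K_1|^2}{2(\sigma^2+h^2)}}\le C\,\Norm{\eta}{L^\infty}\,L^2\,e^{-\frac{1}{2L^2(\sigma^2+h^2)}},
\]
uniformly in $K\in\Z_L^2$, which is the first inequality. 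For the second one I would add and subtract $\eta(K)$ and use $\big|\frac{\sigma^2}{\sigma^2+h^2}-1\big|=\frac{h^2}{\sigma^2+h^2}$ to produce the extra term $\frac{h^2}{\sigma^2+h^2}\Norm{\eta}{L^\infty}$, and then add the bound already obtained.

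All these steps are elementary; the only point requiring a little attention is the lattice-sum bound. With $\eta$ compactly supported the number of contributing sites is $O(L^2)$ and the crude estimate above is enough. If one wanted to relax the support assumption on $\eta$ to, say, polynomial decay in $\langle K\rangle$ (as mentioned after the definition of the initial data), one would instead compare $\sum_{K_1\neq K}e^{-\frac{|K-K_1|^2}{2(\sigma^2+h^2)}}$ with the corresponding Gaussian integral while factoring out one nearest-neighbour factor $e^{-\frac{c}{L^2(\sigma^2+h^2)}}$; this is the only mildly delicate point, and I expect it to be the main (minor) obstacle.
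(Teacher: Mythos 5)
Your proof is correct and follows essentially the same route as the paper: expand $\varphi$ over the building blocks, compute the exact Gaussian overlap to get $\langle\varphi\rangle_{K,\sigma}=\frac{\sigma^2}{\sigma^2+h^2}\sum_{K_1}\eta_{K_1}e^{-|K-K_1|^2/(2(\sigma^2+h^2))}$, isolate the $K_1=K$ term, and bound the $O(L^2)$ off-diagonal terms by the nearest-neighbour factor $e^{-1/(2L^2(\sigma^2+h^2))}$. The only (immaterial) difference is that you evaluate the overlap in frequency space as a convolution of Gaussians in $k$, whereas the paper computes the same quantity in physical space via $\int e^{i(K_1-K)\cdot x}e^{-\frac12(\sigma^2+h^2)|x|^2}\,\dd x$.
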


\begin{proof}
We have
\begin{align*}
\langle \varphi \rangle_{K,\sigma} &=  (2\pi)^3\sigma^2 \int_{\mathbb{R}^2} \overline{g_{K,\sigma}(x)} \varphi(x) \dd x \\
&= (2\pi)^3\sigma^2 \sum_{K_1} \eta_{K_1} \int_{\mathbb{R}^2}\overline{g_{K,\sigma}(x)} g_{K_1,h}(x)\dd x\\
&= (2\pi)^3\sigma^2 \sum_{K_1} \eta_{K_1} \frac{1}{(2\pi)^4}\int_{\R^2} e^{i (K_1 - K)\cdot x } e^{ - \frac{1}{2}\sigma^2 |x|^2 - \frac{1}{2} h^2 |x|^2}\dd x\\
&= \frac{\sigma^2}{2\pi}\sum_{K_1}\eta_{K_1}\frac{2\pi}{ (h^2  + \sigma^2) }e^{- \frac{1}{2 (h^2 + \sigma^2)} | K - K_1|^2}\\
&=\left( \frac{\sigma^2}{\sigma^2 + h^2} \right) \eta_K + \left( \frac{\sigma^2}{\sigma^2 + h^2} \right)\sum_{K_1 \neq K} \eta_{K_1} e^{- \frac{1}{2 ( h^2 + \sigma^2)} | K - K_1|^2}
\end{align*}
which completes the proof using
\begin{equation*}
\left|\left( \frac{\sigma^2}{\sigma^2 + h^2} \right)\sum_{K_1 \neq K} \eta_{K_1} e^{- \frac{1}{2 ( h^2 + \sigma^2)} | K - K_1|^2}\right|
\leq  C\sum_{K_1 \neq K}e^{-\frac{1}{2 L^2 ( h^2 + \sigma^2)} }.
\end{equation*}
\end{proof}

We now state the precise regime we are working with.

\begin{definition}
We say that the set of parameter $(h,L,\sigma)$ satisfy the asymptotic regime if we have $L \gg 1$,
\begin{equation}
\label{regime1}
\tag{AR} 
hL^{4+\delta_0}\le1\quad\text{and}\quad hL\ll\sigma\le h^{\frac{3}{4}}
\end{equation}
for some $\delta_0>0$.
\end{definition}

The scaling implies in particular $hL\ll1$ hence the number of period of $F_L$ observed before the space truncation is large, at least as $L^3$. The condition on $\sigma$ implies
\begin{equation*}
h\ll hL\ll\sigma\le h^{\frac{3}{4}}\le\frac{1}{L^3}\ll\frac{1}{L}\ll 1
\end{equation*}
hence the previous lemma implies that
\begin{equation*}
\sup_{K\in\Z_L^2}\big|\langle\varphi\rangle_{K,\sigma}-\eta(K)\big|\ll1.
\end{equation*}
Our asymptotic regime can be interpreted as the observation of a large number of period with an observation scale large with respect to the Dirac scale $h$ and small with respect to the scale $L$ of the lattice. Our description of the solution is limited for time $t\le\frac{1}{\sigma^2L^2}$ which is large under the asumption $\sigma\ll\frac{1}{L}$. In the asymptotic regime, we have
\begin{equation*}
\frac{1}{\sigma^2L^2}\ge\frac{1}{h^{\frac{3}{2}}L^2}=\frac{1}{h}\frac{1}{\sqrt{hL^4}}\gg\frac{1}{h}
\end{equation*}
which allows to prove our result on the time frame $t\ll\frac{1}{h}$ independent of the scale of observation. As explained in the introduction, two different phenomena will leads in the limit to the kinetic operator. First in the timeframe
\begin{equation*}
1\ll L^\delta\le t\le L^{1-\delta}\ll L^2
\end{equation*}
where the convergence is obtained by taking first continuous limit in $L$ and then a localization on the resonant manifold, whereas in the timeframe
\begin{equation*}
L^2\ll L^{2+\delta}\le t\le\frac{1}{L^\delta h}\ll\frac{1}{h}
\end{equation*}
the convergence is obtain first by a localization on the discrete resonant manifold, followed by a continuous limit $L$ using for any $\delta>0$, by using the analysis of \cite{fgh}. The parameter $h$ can be taken arbitrary small in the asymptotic regime and we have at least
\begin{equation*}
\frac{1}{h}\gg L^4\gg L^2\gg 1
\end{equation*}
hence the two types of limit can be observed.

\begin{remark}
Typically, the \eqref{regime1} is fulfilled for
\begin{equation*}
h=\frac{1}{L^{4+\alpha}}\quad\text{and}\quad\sigma=\frac{L^\beta}{L^{3+\alpha}}
\end{equation*}
with $\alpha>0$ and $0<4\beta\le\alpha$. 
\end{remark}

\medskip

In order to use the expansion from theorem \ref{TheoremCaGa}, we first estimate the norm of the initial data $\varphi$ in $\Sigma$. Then considering \eqref{NLS} with initial data $\varepsilon\varphi$ with $\varepsilon\ll\|\varphi\|_\Sigma$ allows to develop analytically the operator $U(t,\varphi)$ and this justifies the analysis of the first terms with small remainder terms. This is the content of the two following sections, first for deterministic initial data then for random initial data. 

\begin{lemma}\label{LemmaSizeIni}
In the asymptotic regime \eqref{regime1}, we have 
\begin{equation*}
\Norm{\varphi}{L^2} \leq C \Big(\frac{L\Norm{\eta}{L^\infty}}{h}\Big) \quad \mbox{and} 
\quad \Norm{\varphi}{\Sigma} \leq C \Big(\frac{L\Norm{\eta}{L^\infty}}{h^2}\Big)
\end{equation*}
for a constant $C>0$ depending only on $\eta$.
\end{lemma}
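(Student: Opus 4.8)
The plan is to estimate the two norms directly from the frequency-side representation $\varphi_k = \sum_{K\in\Z_L^2}\eta_K\,\widehat g_{K,h}(k)$ where $\widehat g_{K,h}(k) = \frac{1}{2\pi h^2}e^{-|k-K|^2/(2h^2)}$, using that only $O(L^2)$ indices $K$ contribute because $\eta$ is supported on a fixed compact set $B$. First I would compute $\Norm{\varphi}{L^2}^2$ via Plancherel: $\Norm{\varphi}{L^2}^2 = \frac{1}{(2\pi)^2}\Norm{\varphi_k}{L^2}^2$. Expanding the square gives a double sum $\sum_{K,K'}\eta_K\overline{\eta_{K'}}\int_{\R^2}\widehat g_{K,h}(k)\overline{\widehat g_{K',h}(k)}\,\dd k$; each Gaussian integral is $\frac{1}{4\pi h^2}e^{-|K-K'|^2/(4h^2)}$ by Lemma \ref{fouriertransformgaussians}, so the diagonal contributes $\sim \frac{(\#B\cap\Z_L^2)}{h^2}\Norm{\eta}{L^\infty}^2 \lesssim \frac{L^2}{h^2}\Norm{\eta}{L^\infty}^2$ (since $\#(B\cap\Z_L^2)\sim L^2$ and the lattice spacing is $1/L \gg h$, making the off-diagonal terms $e^{-1/(4h^2L^2)}$ exponentially negligible). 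This yields $\Norm{\varphi}{L^2}\lesssim \frac{L}{h}\Norm{\eta}{L^\infty}$.

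For the $\Sigma$-norm, recall $\Norm{\varphi}{\Sigma}^2 \sim \Norm{|x|\varphi}{L^2}^2 + \Norm{\nabla\varphi}{L^2}^2 + \Norm{\varphi}{L^2}^2$, or equivalently on the Fourier side $\Norm{\partial_k\varphi_k}{L^2}^2 + \Norm{|k|\varphi_k}{L^2}^2 + \Norm{\varphi_k}{L^2}^2$. The dominant term is $\Norm{\partial_k\varphi_k}{L^2}$: differentiating each Gaussian $\widehat g_{K,h}$ in $k$ brings down a factor $\frac{k-K}{h^2}$, so $\partial_k\widehat g_{K,h}(k) = -\frac{k-K}{h^2}\widehat g_{K,h}(k)$, whose $L^2$ norm is $\sim \frac{1}{h}\cdot\frac{1}{h} = \frac{1}{h^2}$ times a constant. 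Repeating the diagonalization argument (off-diagonal terms still exponentially small since $1/L\gg h$), one gets $\Norm{\partial_k\varphi_k}{L^2}\lesssim \frac{L}{h^2}\Norm{\eta}{L^\infty}$. The terms $\Norm{|k|\varphi_k}{L^2}$ and $\Norm{\varphi_k}{L^2}$ are lower order: since the frequencies $K$ lie in the bounded set $B$, we have $|k| \lesssim 1 + |k-K|$ on the effective support, so $\Norm{|k|\varphi_k}{L^2}\lesssim \Norm{\varphi_k}{L^2} + \frac{1}{h}\Norm{\varphi_k}{L^2}\cdot h^{-1}\cdots \lesssim \frac{L}{h}\Norm{\eta}{L^\infty}$, absorbed into the $\frac{L}{h^2}$ bound since $h\ll1$. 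Collecting, $\Norm{\varphi}{\Sigma}\lesssim \frac{L}{h^2}\Norm{\eta}{L^\infty}$.

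\textbf{Main obstacle.} The only point requiring care is controlling the off-diagonal cross-terms $K\neq K'$ in the double sums: one must check that $e^{-|K-K'|^2/(4h^2)}$ summed over the $O(L^2)$ lattice sites (with minimal separation $1/L$) is dominated by the diagonal. Since in the asymptotic regime \eqref{regime1} we have $hL\ll1$, i.e. $\frac{1}{h^2L^2}\gg1$, each off-diagonal term is bounded by $e^{-1/(4h^2L^2)}$, which is super-exponentially small, and even multiplied by $L^4$ (the number of pairs) it remains negligible; this is the same mechanism already used in the proof of the preceding proposition. Everything else is routine Gaussian bookkeeping using Lemma \ref{fouriertransformgaussians} and the compact support of $\eta$, so I expect the proof to be short.
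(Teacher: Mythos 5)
Your proof is correct and follows essentially the same route as the paper: expand $\Norm{\varphi}{L^2}^2$ and the $\Sigma$-seminorms as double sums over $K_1,K_2$, observe that the diagonal gives $O(L^2/h^2)$ (resp. $O(L^2/h^4)$ for the $|x|$-weight) while the off-diagonal Gaussian overlaps $e^{-|K_1-K_2|^2/(4h^2)}$ are killed by $hL\ll1$. The only difference is that you compute on the Fourier side via Plancherel while the paper integrates the Gaussians in physical space, which is an equivalent bookkeeping.
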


\begin{proof}
We have 
\begin{align*}
\Norm{\varphi}{L^2}^2 &= \sum_{K_1,K_2} \overline\eta_{K_1} \eta_{K_2}
\int_{\R^2} \overline{g_{K_1,h}(x)} g_{K_2,h}(x)\dd x\\
&= \frac{1}{(2\pi)^4}\sum_{K_1,K_2} \overline\eta_{K_1} \eta_{K_2} \int_{\R^2}
e^{i x \cdot (K_2 - K_1)} e^{ - h^2 |x|^2} \dd x\\
&= \frac{\pi}{h^2 (2\pi)^4}\sum_{K_1,K_2} \overline\eta_{K_1} \eta_{K_2}  e^{ - \frac{1}{4h^2}| K_1 - K_2|^2}\\
&= \frac{1}{2h^2 (2\pi)^3}\sum_{K} |\eta_{K}|^2 
+ \mathcal{O}\Big(\Norm{\eta}{L^{\infty}}^2 \frac{ L^4}{h^2}  e^{ - \frac{1}{4h^2L^2}}\Big) \\
&=  \Norm{\eta}{L^{\infty}}^2 \mathcal{O}\Big( \frac{L^2}{h^2} +  \frac{ L^4}{h^2}  e^{ - \frac{1}{4h^2L^2}}\Big).
\end{align*}
This gives the first bound by noticing that for any $N\ge0$, 
\begin{equation*}
\frac{ L^4}{h^2}  e^{ - \frac{1}{4h^2L^2}} \leq C_N\frac{L^4}{h^2} (h L)^N 
\end{equation*}
for a constant $C_N>0$, making the exponential term negligible in the asymptotic regime \eqref{regime1} by taking $N$ large enough. For the second bound, we have $\partial_{x_1} g_{K,h} = (i (K)_1 - h^2 x_1) g_{K,h}$ hence
\begin{align*}
&\Norm{\partial_{x_1}\varphi}{L^2}^2 = \sum_{K_1,K_2} \overline\eta_{K_1} \eta_{K_2}
\int_{\R^2} (-i (K_1)_1  - h^2 x_1)(i (K_2)_1  - h^2 x_1)\overline{g_{K_1,h}(x)} g_{K_2,h}(x)\dd x\\
&= \frac{1}{(2\pi)^4}\sum_{K_1,K_2} \overline\eta_{K_1} \eta_{K_2} \int_{\R^2}
 (-i (K_1)_1  - h^2 x_1)(i (K_2)_1  - h^2 x_1) e^{i x \cdot (K_2 - K_1)} e^{ - h^2 |x|^2} \dd x\\
&= \frac{1}{h^2(2\pi)^4}\sum_{K_1,K_2} \overline\eta_{K_1} \eta_{K_2} \int_{\R^2}
 (-i (K_1)_1  - h x_1)(i (K_2)_1  - h x_1) e^{\frac{i}{h }x \cdot (K_2 - K_1)} e^{ - |x|^2} \dd x\\
&=\Norm{\eta}{L^{\infty}}^2 \mathcal{O}\Big( \frac{L^2}{h^2} +  \frac{ L^4}{h^2}  e^{ - \frac{1}{4h^2L^2}}\Big)
\end{align*}
with the same arguments since $\eta$ is compactly supported, as well as the bound for $\partial_{x_2}$. We also have 
\begin{align*}
&\Norm{x_1\varphi}{L^2}^2 = \sum_{K_1,K_2} \overline\eta_{K_1} \eta_{K_2}
\int_{\R^2} x_1^2\overline{g_{K_1,h}(x)} g_{K_2,h}(x)\dd x\\
&= \frac{1}{(2\pi)^4}\sum_{K_1,K_2} \overline\eta_{K_1} \eta_{K_2} \int_{\R^2}
 x_1^2 e^{i x \cdot (K_2 - K_1)} e^{ - h^2 |x|^2} \dd x\\
&= \frac{1}{h^4(2\pi)^4}\sum_{K_1,K_2} \overline\eta_{K_1} \eta_{K_2} \int_{\R^2}
x_1^2 e^{\frac{i}{h }x \cdot (K_2 - K_1)} e^{ - |x|^2} \dd x
\end{align*}
which gives an additional factor $h^{-2}$ due to the term $x_1^2$ and completes the proof with similar arguments.
\end{proof}

\section{Deterministic initial data}\label{SectionDeter}

We compute $V_K^1(t)$ for deterministic initial data and its limit. To ensure the convergence of the developments, we work with the expansion from theorem \ref{TheoremCaGa} with initial data $\varepsilon\varphi$ with $\varepsilon\ll\|\varphi\|_\Sigma$, that is 
$\varepsilon\ll\frac{L}{h^2}$ using lemma \ref{LemmaSizeIni}. We first compute the Schr\"odinger propagation of our building blocks $g_{K,\epsilon}$. The following computation will be essential in all that follows.

\begin{lemma}\label{ComputGauss}
Let $z \in \C$ with $\Re z > 0$, $\xi = (\xi_1,\xi_2) \in \C^2$ and 
\begin{equation*}
f(x) = e^{- z |x|^2 + \xi \cdot x}
\end{equation*}
for $x\in\R^2$, with the notation $\xi \cdot x = \xi_1 x_1 + \xi_2 x_2 \in \C$.  We have 
\begin{equation*}
(e^{i t \Delta} f )(x) = \frac{1}{1 + 4izt}  e^{  - \frac{z}{1 + 4i tz}|x|^2 
+ \frac{1}{1 + 4i tz} x \cdot \xi 
+  \frac{it }{1 + 4i tz}  \xi \cdot \xi}.
\end{equation*}
\end{lemma}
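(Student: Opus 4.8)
The plan is to carry out the computation on the Fourier side. Since $|x|^2=x_1^2+x_2^2$ and $\xi\cdot x=\xi_1x_1+\xi_2x_2$, the Gaussian $f$ factorizes as a tensor product $f(x)=f_1(x_1)f_2(x_2)$ with $f_j(s)=e^{-zs^2+\xi_js}$, and correspondingly $e^{it\Delta}=e^{it\partial_{x_1}^2}\otimes e^{it\partial_{x_2}^2}$; so it suffices to prove the one-dimensional identity and multiply the two copies. Throughout, the assumption $\Re z>0$ is exactly what makes every Gaussian integral below absolutely convergent (the integrand decays like $e^{-\Re z\,s^2}$ in $s$, and like a Gaussian in the dual variable as well), so no analytic continuation is needed: the only analytic input is the elementary formula $\int_{\R}e^{-as^2+bs}\,\dd s=\sqrt{\pi/a}\,e^{b^2/(4a)}$, valid for every $a\in\C$ with $\Re a>0$ and every $b\in\C$, the square root being the principal branch.

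First I would compute $\widehat{f_j}(k)=\int_{\R}e^{-zs^2+\xi_js}e^{-iks}\,\dd s=\sqrt{\pi/z}\;e^{(\xi_j-ik)^2/(4z)}$ by completing the square; this is legitimate since $\Re z>0$. The free propagator acts as multiplication by $e^{-itk^2}$, so after an inverse Fourier transform
\[
(e^{it\partial^2}f_j)(s)=\frac{1}{2\pi}\sqrt{\pi/z}\int_{\R}e^{(\xi_j-ik)^2/(4z)-itk^2+iks}\,\dd k .
\]
Expanding $(\xi_j-ik)^2/(4z)$ and collecting the powers of $k$, the exponent becomes $-Ak^2+Bk+\xi_j^2/(4z)$ with $A=(1+4itz)/(4z)$ and $B=i\bigl(s-\xi_j/(2z)\bigr)$, and here $\Re A=\Re\bigl(1/(4z)\bigr)>0$ because $\Re z>0$, so the Gaussian formula applies once more.

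The rest is reassembly: $\int_{\R}e^{-Ak^2+Bk}\,\dd k=\sqrt{\pi/A}\,e^{B^2/(4A)}$ gives $(e^{it\partial^2}f_j)(s)=\frac{1}{2\sqrt{zA}}\,e^{\xi_j^2/(4z)+B^2/(4A)}$. Since $zA=(1+4itz)/4$ the prefactor is $1/\sqrt{1+4itz}$, and substituting $B^2=-(s-\xi_j/(2z))^2$ and $4A=(1+4itz)/z$ into $\xi_j^2/(4z)+B^2/(4A)$ makes it collapse (after clearing the common denominator $1+4itz$) to $(-zs^2+\xi_js+it\,\xi_j^2)/(1+4itz)$. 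Taking the product over $j=1,2$ then produces the prefactor $1/(1+4izt)$ and the exponent $(-z|x|^2+\xi\cdot x+it\,\xi\cdot\xi)/(1+4izt)$, which is exactly the claimed formula.

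The one point that needs a word of care is the branch of the square roots: since $z$ and $A$ both lie in the open right half-plane, $\sqrt{z}\,\sqrt{A}=\sqrt{zA}$ for the principal branch, which is what legitimizes writing $2\sqrt{zA}=\sqrt{1+4izt}$ — here $\sqrt{1+4izt}$ is understood as the branch depending continuously on $t$ and equal to $1$ at $t=0$, and it is nonvanishing for $t\neq0$ since $\Im(1+4izt)=4t\Re z\neq0$. Apart from this bookkeeping the argument is routine. An essentially equivalent alternative is to write $e^{it\Delta}f$ for $t\neq0$ as the convolution of $f$ with the free Schrödinger kernel $\frac{1}{4\pi it}e^{i|x-y|^2/(4t)}$ — absolutely convergent because $|f(y)|$ decays like $e^{-\Re z|y|^2}$ — and to evaluate a single $\R^2$ Gaussian integral in $y$ instead of two one-dimensional Fourier integrals.
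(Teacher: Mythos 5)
Your proposal is correct and follows essentially the same route as the paper: Fourier transform of the complex Gaussian, multiplication by the free propagator symbol $e^{-it|k|^2}$, and a second Gaussian integral after completing the square. The only cosmetic difference is that you factorize into two one-dimensional integrals (which forces the brief discussion of square-root branches that you correctly supply), whereas the paper works directly in $\R^2$ where the prefactor $\pi/z$ makes the branch question moot.
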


\begin{proof}
The Fourier transform of the function 
\begin{align*}
f(x) =  e^{- z|x|^2 + \xi \cdot x} 
\end{align*}
is given by 
\begin{align*}
\hat f (k) &= \int_{\R^2}e^{- z|x|^2 + \xi \cdot x} e^{-ik\cdot x}\dd x=\frac{\pi}{z}e^{\frac{1}{4z}(\xi-ik)(\xi-ik)}= \frac{\pi}{z} e^{  \frac{1}{4z} ( - |k|^2  - 2 i \xi\cdot  k  + \xi\cdot \xi) }.
\end{align*}
We get
\begin{align*}
(e^{i t \Delta} f )(x) &=\frac{1}{(2\pi)^2}\int_{\R^2}e^{-it|k|^2}\widehat{f}(k)e^{ik\cdot x}\dd k\\
&=  \frac{1}{(2\pi)^2}  \frac{\pi}{z}e^{ \frac{\xi\cdot \xi}{4z}}  \int_{\R^2} e^{ - i t |k|^2 + i k\cdot x   - \frac{|k|^2}{4z}   - \frac{i}{2z}  \xi\cdot  k  } \dd k\\
&= \frac{1}{(2\pi)^2}  \frac{\pi}{z}e^{ \frac{\xi\cdot \xi}{4z}}  \int_{\R^2} e^{ - \frac{ 1 + 4z i t}{4z} |k|^2 +  k\cdot (i x     - \frac{i \xi }{2z} ) } \dd k\\
&=   \frac{1}{1 + 4izt}  e^{ \frac{\xi\cdot \xi}{4z}} e^{  - \frac{z}{1 + 4i tz} ( x - \frac{\xi}{2z}) \cdot ( x - \frac{\xi}{2z})  }\\
&=   \frac{1}{1 + 4izt}  e^{ \frac{\xi\cdot \xi}{4z}} e^{  - \frac{z}{1 + 4i tz}|x|^2 
+ \frac{1}{1 + 4i tz} x \cdot \xi 
-  \frac{1}{4z} \frac{1}{1 + 4i tz}  \xi \cdot \xi}\\
&=   \frac{1}{1 + 4izt} e^{  - \frac{z}{1 + 4i tz}|x|^2 
+ \frac{1}{1 + 4i tz} x \cdot \xi 
+  \frac{it}{1 + 4i tz}  \xi \cdot \xi}
\end{align*}
which yields the result. 
\end{proof}

Since we have 
\begin{equation*}
g_{K,\epsilon} (x) = \frac{1}{(2\pi)^2}e^{i K \cdot x } e^{ - \frac{1}{2}\epsilon^2 |x|^2},
\end{equation*}
this yields the propagation of our building blocks $g_{K,\epsilon}$ with $z = \frac{1}{2}\epsilon^2$ and $\xi = i K$. 

\begin{corollary}\label{ComputPropgk}
For $\epsilon>0$ and $K \in \Z^2_L$, we have
\begin{equation*}
(e^{i t \Delta} g_{K,\epsilon})(x) = \frac{\beta_\epsilon(t)}{(2\pi)^2}  e^{  - \frac{\epsilon^2\beta_\epsilon(t)  }{2}|x|^2   
+ i \beta_{\epsilon}(t) x \cdot K
-  i t \beta_{\epsilon}(t) |K|^2}
\end{equation*}
with 
\begin{equation*}
\beta_{\epsilon}(t):=\frac{1}{1+2it\epsilon^2}. 
\end{equation*}
\end{corollary}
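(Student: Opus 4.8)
The plan is to recognize $g_{K,\epsilon}$ as a particular instance of the generic Gaussian $f$ of Lemma \ref{ComputGauss} and then substitute the corresponding parameters. Since
\begin{equation*}
g_{K,\epsilon}(x) = \frac{1}{(2\pi)^2}\, e^{-\frac12\epsilon^2|x|^2 + iK\cdot x} = \frac{1}{(2\pi)^2}\, f(x),
\end{equation*}
we are exactly in the setting of Lemma \ref{ComputGauss} with $z = \tfrac12\epsilon^2$ and $\xi = iK$; the hypothesis $\Re z > 0$ holds because $\epsilon > 0$. By linearity of $e^{it\Delta}$ we then have $(e^{it\Delta}g_{K,\epsilon})(x) = \frac{1}{(2\pi)^2}(e^{it\Delta}f)(x)$, and it only remains to rewrite the output of the lemma in terms of $\beta_\epsilon(t)$.

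First I would note that $4izt = 4it\cdot\tfrac12\epsilon^2 = 2it\epsilon^2$, so $1 + 4izt = 1 + 2it\epsilon^2 = \beta_\epsilon(t)^{-1}$; this replaces the prefactor $\tfrac{1}{1+4izt}$ by $\beta_\epsilon(t)$ and every occurrence of $\tfrac{1}{1+4itz}$ by $\beta_\epsilon(t)$. Then $\tfrac{z}{1+4itz} = \tfrac12\epsilon^2\beta_\epsilon(t)$ produces the quadratic term $-\tfrac{\epsilon^2\beta_\epsilon(t)}{2}|x|^2$; the linear term becomes $\tfrac{1}{1+4itz}\,x\cdot\xi = \beta_\epsilon(t)\,x\cdot(iK) = i\beta_\epsilon(t)\,x\cdot K$; and, using $\xi\cdot\xi = (iK)\cdot(iK) = -|K|^2$, the constant term becomes $\tfrac{it}{1+4itz}\,\xi\cdot\xi = -it\beta_\epsilon(t)|K|^2$. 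Collecting these and retaining the overall factor $\tfrac{1}{(2\pi)^2}$ gives exactly the claimed formula.

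There is no genuine obstacle here: the corollary is an immediate specialization of Lemma \ref{ComputGauss}. The only points deserving a moment's attention are the bookkeeping of the normalization constant $\tfrac{1}{(2\pi)^2}$, which simply multiplies through unchanged, and the sign in $\xi\cdot\xi$, where one must remember that $\xi$ is the complex vector $iK$ rather than $K$, so that $\xi\cdot\xi$ acquires a factor $i^2 = -1$ and produces the physically expected oscillatory phase $e^{-it\beta_\epsilon(t)|K|^2}$.
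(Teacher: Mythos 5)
Your proof is correct and is exactly the paper's argument: the paper derives the corollary by the same substitution $z=\tfrac12\epsilon^2$, $\xi=iK$ into Lemma \ref{ComputGauss}, with the identification $1+4izt=\beta_\epsilon(t)^{-1}$. Your extra care with the factor $\tfrac{1}{(2\pi)^2}$ and the sign in $\xi\cdot\xi=-|K|^2$ is exactly the right bookkeeping.
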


In the following, we will work in the timeframe $t\epsilon^2\ll1$. Thus $\beta_\epsilon(t)\simeq 1$ and
\begin{equation*}
(e^{i t \Delta} g_{K,\epsilon})(x) \simeq \frac{1}{(2\pi)^2}  e^{  - \frac{\epsilon^2}{2}|x|^2   
+ i  x \cdot K
-  i t |K|^2}=g_{K,\epsilon}(x)e^{-it|K|^2}
\end{equation*}
hence the Schrödinger propagation acts as expected as an oscillation at speed $|K|^2$. We now compute the first order term. Recall that
\begin{equation*}
\langle V^1\rangle_{K,\sigma} = (2\pi)^3 \sigma^2 \int_0^t \int_{\mathbb{R}^2} \overline{g_{K,\sigma}}(x)e^{-i s \Delta}\big( |e^{is\Delta} \varphi|^2 e^{is\Delta} \varphi \big)(x) \dd x\dd s.
\end{equation*}
With the previous approximation, we get
\begin{equation*}
e^{-i s \Delta}\big( e^{is\Delta} g_{K_1,h} e^{-is\Delta} \overline{g_{K_2,h}} e^{is\Delta} g_{K_3,h} \big)\simeq\frac{1}{(2\pi)^4}e^{-is(|K_1-K_2+K_3|^2-|K_1|^2+|K_2|^2-|K_3|^2)}g_{K_1-K_2+K_3,h}
\end{equation*}
for $K_1,K_2,K_3\in\Z_L^2$ hence
\begin{align*}
(2\pi)^3\sigma^2\int_{\R^2}\overline{g_{K,\sigma}(x)}&e^{-i s \Delta}\big( e^{is\Delta} g_{K_1,h} e^{-is\Delta} \overline{g_{K_2,h}} e^{is\Delta} g_{K_3,h} \big)(x)\dd x\\
&\simeq\frac{1}{(2\pi)^4}e^{-is(|K|^2-|K_1|^2+|K_2|^2-|K_3|^2)}\delta_0(K-K_1+K_2-K_3).
\end{align*}
The following proposition makes this precise.

\begin{proposition}\label{PropExpansion1}
Let $(h,L,\sigma)$ in the asymptotic regime \eqref{regime1}. We have 
\begin{equation*}
\langle V^1(t)\rangle_{K,\sigma}=\frac{1}{(2\pi)^4}\sum_{K=K_1-K_2+K_3}\eta_{K_1}\overline{\eta_{K_2}}\eta_{K_3}\frac{1-e^{-it\Delta\omega_{KK_1K_2K_3}}}{i\Delta\omega_{KK_1K_2K_3}}+r_K(t,\eta) 
\end{equation*}
with the convention $\frac{1-e^{i0t}}{i0}=t$ and $r_K$ satisfying the following estimates. For $t\le\frac{1}{\sigma^2L^2}$, we have
\begin{align*}
|r_K(t,\eta)|&\le CL^6e^{-\frac{1}{2\sigma^2L^2}+C(t+\frac{h^2}{\sigma^4})}+CtL^2\log(L)\big(\frac{h^2}{\sigma^2}+t\sigma^2+t^2h^2+t^3\sigma^4\big)\\
&\quad+CL^{4+\delta}\big(\frac{h^2}{\sigma^2}+t\sigma^2+t^2\sigma^2+t^3\sigma^4\big)
\end{align*}
for any $\delta>0$ and $C>0$ a constant depending only on $\eta$. In particular, we have 
\begin{equation*}
t\ll\frac{1}{h}\quad\implies\quad|r_k(t,\eta)|\ll tL^2\log(L)+L^4.
\end{equation*}
\end{proposition}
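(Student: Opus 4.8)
The approach is to turn $\langle V^1(t)\rangle_{K,\sigma}$ into a finite sum of completely explicit Gaussian integrals, read off the announced discrete sum as its leading term, and estimate the remainder through three separate contributions (off‑diagonal, resonant, non‑resonant). By trilinearity of $R$ in $(\varphi,\varphi,\varphi)$, the decomposition $\varphi=\sum_{K_1\in\Z_L^2}\eta_{K_1}g_{K_1,h}$ and the identity $\langle v\rangle_{K,\sigma}=(2\pi)^3\sigma^2\int_{\R^2}\overline{g_{K,\sigma}(x)}\,v(x)\,\dd x$, one has
\begin{equation*}
\langle V^1(t)\rangle_{K,\sigma}=\sum_{K_1,K_2,K_3\in\Z_L^2}\eta_{K_1}\overline{\eta_{K_2}}\eta_{K_3}\int_0^t I_s(K,K_1,K_2,K_3)\,\dd s ,
\end{equation*}
with $I_s=(2\pi)^3\sigma^2\int_{\R^2}\overline{g_{K,\sigma}(x)}\,e^{-is\Delta}\big(e^{is\Delta}g_{K_1,h}\cdot\overline{e^{is\Delta}g_{K_2,h}}\cdot e^{is\Delta}g_{K_3,h}\big)(x)\,\dd x$. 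Corollary \ref{ComputPropgk} makes each propagated building block an explicit Gaussian (parameter $\beta_\epsilon(s)=(1+2is\epsilon^2)^{-1}$); the product of the three is again an explicit Gaussian in $x$; applying $e^{-is\Delta}$ to it via Lemma \ref{ComputGauss} keeps it Gaussian; and the final $x$-integral against $\overline{g_{K,\sigma}}$ is the elementary $\int_{\R^2}e^{-A|x|^2+B\cdot x}\,\dd x=\tfrac{\pi}{A}e^{B\cdot B/(4A)}$. Thus $I_s$ is an explicit function of $s$ and of the (bounded, since $\eta$ is compactly supported) indices.

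To isolate the main term, note that in the regime \eqref{regime1} and for $0\le s\le t\le(\sigma^2L^2)^{-1}$ one has $s\sigma^2,sh^2,h^2/\sigma^2\ll1$, so $\beta_h(s)=1+O(sh^2)$. Expanding the explicit data, the quadratic coefficient $A$ equals $\tfrac12(\sigma^2+3h^2)$ up to $O(sh^4)$, the linear coefficient $B$ equals $i(K_1-K_2+K_3-K)$ up to $O(sh^2)$, and the $s$-dependent phases — the three Schr\"odinger phases $e^{-is|K_j|^2}$ together with the phase $e^{is|K_1-K_2+K_3|^2}$ generated by $e^{-is\Delta}$ in Lemma \ref{ComputGauss} — combine, on the diagonal $K_1-K_2+K_3=K$, to $e^{is\Delta\omega_{KK_1K_2K_3}}$. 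Collecting constants, on that diagonal
\begin{equation*}
I_s=\frac{\sigma^2}{(2\pi)^4(\sigma^2+3h^2)}\;e^{is\Delta\omega_{KK_1K_2K_3}}\;\rho_s ,\qquad \rho_s=1+O\!\Big(\tfrac{h^2}{\sigma^2}+s\sigma^2+s^2h^2+s^3\sigma^4\Big),
\end{equation*}
the $O(\cdot)$ uniform over the indices, while off the diagonal $|K_1-K_2+K_3-K|\ge L^{-1}$ on $\Z_L^2$ and the dominant part of $\Re(B\cdot B/(4A))$ is $-\tfrac{|K_1-K_2+K_3-K|^2}{2(\sigma^2+3h^2)}$, whence $|I_s|\le C\exp\!\big(-\tfrac{1}{2(\sigma^2+3h^2)L^2}+C(s+h^2/\sigma^4)\big)$, the growth accounting for the positive real parts that Lemma \ref{ComputGauss} can produce. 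Replacing $\tfrac{\sigma^2}{\sigma^2+3h^2}$ and $\rho_s$ by $1$ on the diagonal and integrating in $s$ yields exactly the stated main term $\tfrac1{(2\pi)^4}\sum_{K=K_1-K_2+K_3}\eta_{K_1}\overline{\eta_{K_2}}\eta_{K_3}\frac{1-e^{-it\Delta\omega_{KK_1K_2K_3}}}{i\Delta\omega_{KK_1K_2K_3}}$ (with the convention at resonance); hence $r_K(t,\eta)$ is the sum of (i) the off-diagonal part $K_1-K_2+K_3\neq K$ and (ii) the diagonal part with $\tfrac{\sigma^2}{\sigma^2+3h^2}\rho_s-1$ in place of $1$.

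For (i), since $t\le(\sigma^2L^2)^{-1}$ the growth $e^{C(s+h^2/\sigma^4)}$ is absorbed by half of the Gaussian decay, so summing over the $\lesssim L^6$ admissible tuples gives $CL^6e^{-1/(2\sigma^2L^2)+C(t+h^2/\sigma^4)}$. For (ii), split the diagonal indices into the resonant set $\{\Delta\omega_{KK_1K_2K_3}=0\}$ and its complement. On the resonant set $\tfrac{1-e^{-it\Delta\omega}}{i\Delta\omega}=t$ and $\big|\int_0^t(\tfrac{\sigma^2}{\sigma^2+3h^2}\rho_s-1)\,\dd s\big|\le t\sup_{s\le t}\big|\tfrac{\sigma^2}{\sigma^2+3h^2}\rho_s-1\big|\lesssim t\big(\tfrac{h^2}{\sigma^2}+t\sigma^2+t^2h^2+t^3\sigma^4\big)$, and this set has $\lesssim L^2\log L$ points in the support of $\eta$ (the classical count of orthogonal pairs on the lattice, with the $\zeta(2)$ constant), giving $CtL^2\log L\big(\tfrac{h^2}{\sigma^2}+t\sigma^2+t^2h^2+t^3\sigma^4\big)$. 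On the non-resonant set the crude bound $\int_0^t\le t\sup$ is too weak; instead write $\tfrac{\sigma^2}{\sigma^2+3h^2}\rho_s=c_0+\int_0^s\gamma_{s'}\,\dd s'$ with $|c_0-1|\lesssim h^2/\sigma^2$ and $|\gamma_{s'}|$ bounded by the derivative of the above polynomial: the $c_0-1$ piece contributes $O(h^2/\sigma^2)$ times $\sum_{\substack{K=K_1-K_2+K_3\\\Delta\omega_{KK_1K_2K_3}\neq0}}\eta_{K_1}\overline{\eta_{K_2}}\eta_{K_3}\tfrac{1-e^{-it\Delta\omega}}{i\Delta\omega}$, which is $\lesssim L^{4+\delta}$ by \cite{fgh}; and the $\gamma$-piece, after Fubini and one integration by parts in $s$, is bounded by combining the count $\lesssim L^{4+\delta}t^{-1}+L^2\log L$ of $\{|\Delta\omega|<t^{-1}\}$ with a dyadic decomposition of $\{|\Delta\omega|\ge t^{-1}\}$ (finite since $|\Delta\omega|\lesssim1$ on the support), which gives $CL^{4+\delta}\big(\tfrac{h^2}{\sigma^2}+t\sigma^2+t^2\sigma^2+t^3\sigma^4\big)$ up to terms already absorbed above. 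Adding (i)--(ii) is the claimed estimate, and the final implication follows by inserting $hL^{4+\delta_0}\le1$, $hL\ll\sigma\le h^{3/4}$ and $t\le h^{-1}$, under which each error term is $o(tL^2\log L+L^4)$.

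\textbf{Main obstacle.} The delicate part is the expansion of the second step: establishing the precise size $\tfrac{h^2}{\sigma^2}+s\sigma^2+s^2h^2+s^3\sigma^4$ of $\rho_s-1$ uniformly over the indices, and, for the off-diagonal terms, pinning down the growth rate $e^{C(s+h^2/\sigma^4)}$ of the Gaussian exponents coming from Lemma \ref{ComputGauss} so that the time restriction $t\le(\sigma^2L^2)^{-1}$ exactly controls it against the decay $e^{-1/(2\sigma^2L^2)}$. Once this bookkeeping is in place, the resonant estimate is immediate and the non-resonant one is a lengthy but routine combination of the bound of \cite{fgh} with integration by parts and lattice-point counting.
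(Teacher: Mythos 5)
Your proposal is correct and follows essentially the same route as the paper's proof: the same reduction to explicit Gaussian integrals via Lemma \ref{ComputGauss} and Corollary \ref{ComputPropgk}, the same three-way split of the remainder (off-diagonal tuples with Gaussian decay $e^{-|K_1-K_2+K_3-K|^2/(2(\sigma^2+3h^2))}$ against growth $e^{C(s+h^2/\sigma^4)}$, resonant diagonal terms controlled by the uniform expansion $\rho_s-1=\mathcal{O}(\tfrac{h^2}{\sigma^2}+s\sigma^2+s^2h^2+s^3\sigma^4)$ times the $L^2\log L$ lattice count, and non-resonant diagonal terms handled by integration by parts in $s$ combined with the $L^{4+\delta}$ level-set bounds of \cite{fgh}). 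Your packaging of the non-resonant piece via $c_0+\int_0^s\gamma_{s'}\,\dd s'$ and a dyadic decomposition in $|\Delta\omega|$ is only a cosmetic variant of the paper's direct integration by parts against $e^{-is\Delta\omega}$ and its summation of $1/|\xi|$ over the level sets.
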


\begin{proof}
We have 
\begin{equation*}
\langle V^1\rangle_{K,\sigma} = (2\pi)^3 \sigma^2 \int_0^t \int_{\mathbb{R}^2} \overline{g_{K,\sigma}(x)}e^{-i s \Delta}\big( |e^{is\Delta} \varphi|^2 e^{is\Delta} \varphi \big)(x) \dd x \dd s.
\end{equation*}
and as $e^{i s \Delta}$ is selfadjoint, we have 
\begin{align*}
\langle V^1\rangle_{K,\sigma} &= (2\pi)^3 \sigma^2 \int_0^t \sum_{K_1,K_2,K_3} \eta_{K_1} \overline{\eta_{K_2}} \eta_{K_3}\int_{\mathbb{R}^2}  (\overline{e ^{is \Delta}g_{K,\sigma }})( e^{is\Delta} g_{K_1,h})( \overline{e^{is\Delta} g_{K_2,h}}) ( e^{is\Delta} g_{K_3,h}) \dd x\dd s \\
&= (2\pi)^3 \sigma^2 \sum_{K_1,K_2,K_3} \eta_{K_1}\overline{\eta_{K_2}}\eta_{K_3}\int_0^t W_{ K_1K_2 K_3 K}(s) \dd s
\end{align*}
where 
\begin{align*}
W_{ K_1K_2 K_3 K}(s) &=  \frac{1}{(2\pi)^8}|\beta_h(s)|^2 \beta_h(s) \overline{\beta_{\sigma }(s)}  \int_{\mathbb{R}^2} F_{K_1K_2K_3K}(s,x) \dd x 
\end{align*}
with 
\begin{align*}
 F_{K_1K_2K_3K}(s,x) = e^{- z(s)|x|^2 + \zeta(s) \cdot x + \gamma(s)} 
\end{align*}
and
\begin{align*}
z(s)&=h^2\beta_h(s)+\frac{h^2}{2}\overline{\beta_h(s)}+\frac{\sigma^2}{2}\overline{\beta_\sigma(s)},\\
\zeta(s)&=i\beta_h(s)(K_1 + K_3)-i\overline{\beta_h(s)}K_2-i\overline{\beta_\sigma(s)}K,\\
\gamma(s)&=-is\beta_h(s)(|K_1|^2+|K_3|^2)+is\overline{\beta_h(s)}|K_2|^2+is\overline{\beta_\sigma(s)}|K|^2.
\end{align*}
This gives
\begin{align*}
W_{ K_1K_2 K_3 K}(s) =\frac{\pi}{z(s)}\frac{1}{(2\pi)^8}|\beta_h(s)|^2 \beta_h(s) \overline{\beta_{\sigma }(s)}  e^{\gamma(s) + \frac{\zeta(s)\cdot \zeta(s)}{4 z(s)}}
\end{align*}
and we get 
\begin{align*}
\langle V^1\rangle_{K,\sigma}
 &= \frac{ \sigma^2}{(2\pi)^5} \sum_{K_1,K_2,K_3} \eta_{K_1}\overline{\eta_{K_2}}\eta_{K_3}
\int_0^t \frac{\pi}{z(s) } |\beta_h(s)|^2 \beta_h(s) \overline{\beta_{\sigma }(s)}  e^{ \gamma(s) + \frac{\zeta(s)\cdot \zeta(s)}{4 z(s)}} \dd s.
\end{align*}
We define  
the remainder 
\begin{align*}
&r_K(t,\eta)=(2\pi)^3\sigma^2\sum_{K\neq K_1-K_2+K_3}\eta_{K_1}\overline{\eta_{K_2}}\eta_{K_3}\int_0^tW_{K_1K_2K_3K}(s)\dd s\\
&\quad+(2\pi)^3\sigma^2\sum_{K=K_1-K_2+K_3}\eta_{K_1}\overline{\eta_{K_2}}\eta_{K_3}\int_0^t\Big(\frac{1}{(2\pi)^7\sigma^2}-W_{K_1K_2K_3K}(s)e^{is\Delta\omega_{KK_1K_2K_3}}\Big)e^{-is\Delta\omega_{KK_1K_2K_3}}\dd s\\
&=(2\pi)^3\sigma^2\sum_{K\neq K_1-K_2+K_3}\eta_{K_1}\overline{\eta_{K_2}}\eta_{K_3}\int_0^tW_{K_1K_2K_3K}(s)\dd s\\
&\quad+(2\pi)^3\sigma^2\sum_{\substack{K=K_1-K_2+K_3\\\Delta\omega_{K_1K_2K_3K}=0}}\eta_{K_1}\overline{\eta_{K_2}}\eta_{K_3}\int_0^t\Big(\frac{1}{(2\pi)^7\sigma^2}-W_{K_1K_2K_3K}(s)\Big)\dd s\\
&\quad+(2\pi)^3\sigma^2\sum_{\substack{K=K_1-K_2+K_3\\\Delta\omega_{K_1K_2K_3K}\neq0}}\eta_{K_1}\overline{\eta_{K_2}}\eta_{K_3}\frac{\Big(e^{it\Delta\omega_{KK_1K_2K_3}}W_{K_1K_2K_3K}(t)-\frac{1}{(2\pi)^7\sigma^2}\Big)e^{-it\Delta\omega_{KK_1K_2K_3}}}{i\Delta\omega_{KK_1K_2K_3}}\\
&\quad+(2\pi)^3\sigma^2\sum_{\substack{K=K_1-K_2+K_3\\\Delta\omega_{K_1K_2K_3K}\neq0}}\frac{\eta_{K_1}\overline{\eta_{K_2}}\eta_{K_3}}{i\Delta\omega_{KK_1K_2K_3}}\int_0^te^{-is\Delta\omega_{KK_1K_2K_3}}\partial_s(W_{K_1K_2K_3K}(s)e^{is\Delta\omega_{KK_1K_2K_3}})\dd s
\end{align*}
using an integration by part for the sum over nonresonant terms. We thus have
\begin{equation*}
\langle V^1(t)\rangle_{K,\sigma}=\frac{1}{(2\pi)^4}\sum_{K=K_1-K_2+K_3}\eta_{K_1}\overline{\eta_{K_2}}\eta_{K_3}\int_0^te^{-is\Delta\omega_{KK_1K_2K_3}}\dd s+R_K(t,\eta).
\end{equation*}
The result follows from the three following bounds. For $K\neq K_1-K_2+K_3$ and $s\le\frac{1}{\sigma^2L^2}$, we have
\begin{equation}\label{Estim1}\tag{A}
|W_{K_1K_2K_3K}(s)|\le Ce^{-\frac{1}{2\sigma^2L^2}+C(s+\frac{h^2}{\sigma^4})}
\end{equation}
for a positive constant $C>0$ depending only on $\eta$. For $K=K_1-K_2+K_3$ and $s\le\frac{1}{\sigma^2L^2}$, we have
\begin{equation}\label{Estim2}\tag{B}
\Big|\frac{1}{(2\pi)^7}-\sigma^2W_{K_1K_2K_3K}(s)e^{is\Delta\omega_{KK_1K_2K_3}}\Big|\le C\big(\frac{h^2}{\sigma^2}+s\sigma^2+s^2h^2+s^3\sigma^4\big)
\end{equation}
and
\begin{equation}\label{Estim3}\tag{C}
|\partial_s(W_{K_1K_2K_3K}(s)e^{is\Delta\omega_{KK_1K_2K_3}})|\le C(1+s).
\end{equation}
We now complete the proof before proving these bounds. The first bound gives
\begin{align*}
\Big|\sum_{K\neq K_1-K_2+K_3}\eta_{K_1}\overline{\eta_{K_2}}\eta_{K_3}\int_0^tW_{K_1K_2K_3K}(s)\dd s\Big|&\le CL^6\int_0^te^{-\frac{1}{2\sigma^2L^2}+C(s+\frac{h^2}{\sigma^4})}\dd s\\
&\le CL^6e^{-\frac{1}{2\sigma^2L^2}+C(t+\frac{h^2}{\sigma^4})}
\end{align*}
where $C>0$ denotes a constant depending only on $\eta$ that may change during the proof. The second bound gives
\begin{align*}
\Big|\sigma^2\sum_{\substack{K=K_1-K_2+K_3\\\Delta\omega_{K_1K_2K_3K}=0}}\eta_{K_1}\overline{\eta_{K_2}}\eta_{K_3}&\int_0^t\Big(\frac{1}{(2\pi)^7\sigma^2}-W_{K_1K_2K_3K}(s)\Big)\dd s\Big|\\
&\le C\sum_{\substack{K=K_1-K_2+K_3\\\Delta\omega_{K_1K_2K_3K}=0}}|\eta_{K_1}\overline{\eta_{K_2}}\eta_{K_3}|\int_0^t\big(\frac{h^2}{\sigma^2}+s\sigma^2+s^2h^2+s^3\sigma^4\big)\dd s\\
&\le CtL^2\log(L)\big(\frac{h^2}{\sigma^2}+t\sigma^2+t^2h^2+t^3\sigma^4\big)
\end{align*}
using that
\begin{equation*}
\frac{1}{L^2\log(L)}\sum_{\substack{K=K_1-K_2+K_3\\\Delta\omega_{K_1K_2K_3K}=0}}|\eta_{K_1}\overline{\eta_{K_2}}\eta_{K_3}|\le C
\end{equation*}
which follows from \cite{fgh} theorem $2.2$. The second bounds also gives
\begin{align*}
\Big|\sigma^2\sum_{\substack{K=K_1-K_2+K_3\\\Delta\omega_{K_1K_2K_3K}\neq0}}\eta_{K_1}\overline{\eta_{K_2}}\eta_{K_3}&\frac{\Big(e^{it\Delta\omega_{KK_1K_2K_3}}W_{K_1K_2K_3K}(t)-\frac{1}{(2\pi)^7\sigma^2}\Big)e^{-it\Delta\omega_{KK_1K_2K_3}}}{i\Delta\omega_{KK_1K_2K_3}}\Big|\\
&\le C\big(\frac{h^2}{\sigma^2}+t\sigma^2+t^2h^2+t^3\sigma^4\big)\sum_{\substack{K=K_1-K_2+K_3\\\Delta\omega_{K_1K_2K_3K}\neq0}}\Big|\frac{\eta_{K_1}\overline{\eta_{K_2}}\eta_{K_3}}{\Delta\omega_{KK_1K_2K_3}}\Big|.
\end{align*}
Proposition $3.9$ from \cite{fgh} gives
\begin{equation*}
\sum_{\substack{K=K_1-K_2+K_3\\\Delta\omega_{K_1K_2K_3K}=\xi}}|\eta_{K_1}\overline{\eta_{K_2}}\eta_{K_3}|\le CL^{2+\delta}
\end{equation*}
for any $\delta>0$, using that $\eta$ is compactly supported. We get
\begin{align*}
\sum_{\substack{K=K_1-K_2+K_3\\\Delta\omega_{K_1K_2K_3K}\neq0}}\Big|\frac{\eta_{K_1}\overline{\eta_{K_2}}\eta_{K_3}}{\Delta\omega_{KK_1K_2K_3}}\Big|&=\sum_{\xi\in\Z_{L^2}\cap[-A,A]}\frac{1}{|\xi|}\sum_{\substack{K=K_1-K_2+K_3\\\Delta\omega_{K_1K_2K_3K}=\xi}}|\eta_{K_1}\overline{\eta_{K_2}}\eta_{K_3}|\\
&\le CL^{2+\delta}\sum_{\xi\in\Z_{L^2}\cap[-A,A]}\frac{1}{|\xi|}\\
&\le CL^{4+\delta}\sum_{\xi\in\Z\cap[-AL^2,AL^2]}\frac{1}{|\xi|}\\
&\le CL^{4+\delta}\log(L)\\
&\le CL^{4+2\delta}
\end{align*}
for any $\delta>0$ and a finite $A>0$ depending on $\eta$ since it is compactly supported. The third bound gives
\begin{align*}
\Big|\sigma^2\sum_{\substack{K=K_1-K_2+K_3\\\Delta\omega_{K_1K_2K_3K}\neq0}}\frac{\eta_{K_1}\overline{\eta_{K_2}}\eta_{K_3}}{i\Delta\omega_{KK_1K_2K_3}}&\int_0^te^{-is\Delta\omega_{KK_1K_2K_3}}\partial_s(W_{K_1K_2K_3K}(s)e^{is\Delta\omega_{KK_1K_2K_3}})\dd s\Big|\\
&\le Ct(1+t)\sigma^2\sum_{\substack{K=K_1-K_2+K_3\\\Delta\omega_{K_1K_2K_3K}\neq0}}\Big|\frac{\eta_{K_1}\overline{\eta_{K_2}}\eta_{K_3}}{\Delta\omega_{KK_1K_2K_3}}\Big|\\
&\le Ct(1+t)\sigma^2L^{4+\delta}
\end{align*}
for any $\delta>0$. Thus we obtain
\begin{align*}
|r_K(t,\eta)|&\le CL^6e^{-\frac{1}{2\sigma^2L^2}+C(t+\frac{h^2}{\sigma^4})}+tL^2\log(L)\big(\frac{h^2}{\sigma^2}+t\sigma^2+t^2h^2+t^3\sigma^4\big)\\
&\quad+L^{4+\delta}\big(\frac{h^2}{\sigma^2}+t\sigma^2+t^2\sigma^2+t^3\sigma^4\big).
\end{align*}
For the last part of the statement, note that $tL^2\log(L)+L^4$ is of order $L^4$ for $t\le\frac{L^2}{\log(L)}$ and of order $tL^2\log(L)$ for $t\ge\frac{L^2}{\log(L)}$. In the first case, we have
\begin{equation*}
t\le\frac{L^2}{\log(L)}\quad\implies\quad th\ll t\sigma\le(hL^4)^{\frac{3}{4}}\frac{1}{L\log(L)}\ll 1
\end{equation*}
hence
\begin{equation*}
|r_K(t,\eta)|\le CL^6e^{-\frac{1}{3\sigma^2L^2}}+CL^4\left(\frac{1}{L^{2-\delta}}+\frac{\sigma(hL^4)^{\frac{3}{4}}}{L\log(L)}+\frac{(hL^4)^{\frac{3}{2}}}{L^{2-\delta}\log(L)^2}+\frac{\sigma(hL^4)^{\frac{9}{4}}}{L^3\log(L)^3}\right)
\end{equation*}
using $hL\ll\sigma\le h^{\frac{3}{4}}$ hence $|r_k(t,\eta)|\ll L^4+tL^2\log(L)$ and we now consider $t\ge\frac{L^2}{\log(L)}$. We have
\begin{equation*}
L^{4+\delta}t^2\sigma^2\ll tL^2\log(L)\quad\iff\quad t\ll\frac{1}{\sigma^2L^2}\frac{\log(L)}{L^\delta}.
\end{equation*}
Since $\sigma\le h^{\frac{3}{4}}$, we have
\begin{equation*}
\frac{1}{\sigma^2L^2}\frac{\log(L)}{L^\delta}\ge\frac{1}{h}\frac{1}{\sqrt{hL^4}}\frac{\log(L)}{L^\delta}\gg\frac{1}{h}
\end{equation*}
for $0<\delta<\delta_0$ since $hL^{4+\delta_0}\le1$ with $\delta_0>0$. Thus for $t\le\frac{\epsilon}{h}$, we have
\begin{equation*}
L^{4+\delta}t^2\sigma^2\ll tL^2\log(L)
\end{equation*}
hence
\begin{equation*}
|r_K(t,\eta)|\le CL^6e^{-\frac{1}{2\sigma^2L^2}+C(t+\frac{h^2}{\sigma^4})}+CtL^2\log(L)\big(\frac{1}{L^2}+\varepsilon\sqrt{h}+\varepsilon^2+\varepsilon^3\big).
\end{equation*}
It only remains to prove the bounds \eqref{Estim1}, \eqref{Estim2} and \eqref{Estim3}.

\medskip

\noindent\underline{\it Proof of \eqref{Estim1}.} Assume that $K\neq K_1-K_2+K_3$. For $s\le\frac{1}{\sigma^2L^2}$, we have $sh^2\ll s\sigma^2\le\frac{1}{L^2}\ll1$ hence
\begin{align*}
z(s)&=\frac{\sigma^2}{2}+\mathcal{O}(h^2+s\sigma^4),\\
\zeta(s)&=i(K_1+K_3-K_2-K)+\mathcal{O}(s\sigma^2)
\end{align*}
since $K_1,K_2,K_3,K\in B\cap\Z_L^2$ are bounded hence
\begin{align*}
\frac{1}{4z(s)}&=\frac{1}{2\sigma^2+\mathcal{O}(h^2+s\sigma^4)}\\
&=\frac{1}{2\sigma^2}\cdot\frac{1}{1+\mathcal{O}(h^2\sigma^{-2}+s\sigma^2)}\\
&=\frac{1}{2\sigma^2}+\mathcal{O}(h^2\sigma^{-4}+s)
\end{align*}
and
\begin{equation*}
\text{Re}\Big(\frac{\zeta(s)\cdot\zeta(s)}{4z(s)}\Big)=-\frac{|K_1+K_3-K_2-K|^2}{2\sigma^2}+\mathcal{O}(s+h^2\sigma^{-4}).
\end{equation*}
We also have $\gamma(s)=\mathcal{O}(s)$ hence
\begin{equation*}
\text{Re}\Big(\gamma(s)+\frac{\zeta(s)\cdot\zeta(s)}{4z(s)}\Big)=-\frac{|K_1+K_3-K_2-K|^2}{2\sigma^2}+\mathcal{O}(s+h^2\sigma^{-4})
\end{equation*}
which completes the proof.

\medskip

\noindent\underline{\it Proof of \eqref{Estim2}.} Consider
\begin{equation*}
\Gamma_{K_1K_2K_3K}(s):=(2\pi)^7\sigma^2W_{K_1K_2K_3K}(s)e^{is\Delta\omega_{KK_1K_2K_3}}
\end{equation*}
and we have to prove
\begin{equation*}
\Big|1-\Gamma_{K_1K_2K_3K}(s)\Big|\le C\big(\frac{h^2}{\sigma^2}+s\sigma^2+s^2h^2+s^3\sigma^4\big).
\end{equation*}
We have
\begin{equation*}
\Gamma_{K_1K_2K_3K}(s)=\frac{\sigma^2}{z(s)}|\beta_h(s)|^2 \beta_h(s) \overline{\beta_{\sigma }(s)}  e^{\widetilde\gamma(s) + \frac{\zeta(s)\cdot \zeta(s)}{4 z(s)}}
\end{equation*}
with
\begin{align*}
z(s)&=h^2\beta_h(s)+\frac{h^2}{2}\overline{\beta_h(s)}+\frac{\sigma^2}{2}\overline{\beta_\sigma(s)},\\
\zeta(s)&=i\beta_h(s)(K_1 + K_3)-i\overline{\beta_h(s)}K_2-i\overline{\beta_\sigma(s)}K,\\
\widetilde\gamma(s)&=-is(\beta_h(s)-1)(|K_1|^2+|K_3|^2)+is(\overline{\beta_h(s)}-1)|K_2|^2+is(\overline{\beta_\sigma(s)}-1)|K|^2.
\end{align*}
Again, for $s\le\frac{1}{\sigma^2L^2}$ we have $sh^2\ll s\sigma^2\ll1$ hence
\begin{equation*}
z(s)=\frac{\sigma^2}{2}+\frac{3h^2}{2}+\mathcal{O}(s\sigma^4)
\end{equation*}
and
\begin{align*}
\frac{1}{4z(s)}&=\frac{1}{2\sigma^2+6h^2+\mathcal{O}(s\sigma^4)}\\
&=\frac{1}{2\sigma^2}\big(1-\frac{3h^2}{\sigma^2}+\mathcal{O}(s\sigma^2)\big)\\
&=\frac{1}{2\sigma^2}-\frac{3h^2}{2\sigma^4}+\mathcal{O}(s).
\end{align*}
We also have
\begin{align*}
\zeta(s)&=i\beta_h(s)(K_1 + K_3)-i\overline{\beta_h(s)}K_2-i\overline{\beta_\sigma(s)}K\\
&=2sh^2(K_1+K_3+K_2)+2s\sigma^2 K+\mathcal{O}(s^2\sigma^4).
\end{align*}
We get
\begin{equation*}
\zeta(s)\cdot\zeta(s)=4s^2\sigma^4|K|^2+4s^2h^4|K_1+K_2+K_3|^2+8s^2h^2\sigma^2(K_1+K_2+K_3)\cdot K+\mathcal{O}(s^3\sigma^6)
\end{equation*}
hence
\begin{equation*}
\frac{\zeta(s)\cdot\zeta(s)}{4z(s)}=2s^2\sigma^2|K|^2+4s^2h^2(K_1+K_2+K_3)\cdot K-6s^2h^2|K|^2+\mathcal{O}(s^2\frac{h^4}{\sigma^2}+s^3\sigma^4).
\end{equation*}
We also have
\begin{equation*}
\widetilde\gamma(s)=-2h^2s^2(|K_1|^2+|K_2|^2+|K_3|^2)-2\sigma^2s^2|K|^2+\mathcal{O}(s^3\sigma^4)
\end{equation*}
hence
\begin{align*}
\widetilde\gamma(s)+\frac{\zeta(s)\cdot\zeta(s)}{4z(s)}&=-2h^2s^2(|K_1|^2+|K_2|^2+|K_3|^2)+4s^2h^2(K_1+K_2+K_3)\cdot K-6s^2h^2|K|^2\\
&\quad+\mathcal{O}(s^2\frac{h^4}{\sigma^2}+s^3\sigma^4)\\
&=-2h^2s^2|K_1-K|^2-2h^2s^2|K_2-K|^2-2h^2s^2|K_3-K|^2+\mathcal{O}(s^2\frac{h^4}{\sigma^2}+s^3\sigma^4)
\end{align*}
which gives
\begin{equation*}
e^{is(|K_1|^2+|K_3|^2-|K_2|^2-|K|^2)}e^{\widetilde\gamma(s)+\frac{\zeta(s)\cdot\zeta(s)}{4z(s)}}=1+\mathcal{O}(s^2h^2+s^3\sigma^4).
\end{equation*}
We get
\begin{align*}
\Gamma_{ K_1K_2 K_3 K}(s) &=\frac{\sigma^2}{z(s)}|\beta_h(s)|^2 \beta_h(s) \overline{\beta_{\sigma }(s)} \big(1+\mathcal{O}(s^2h^2+s^3\sigma^4)\big)\\
&=\frac{\sigma^2}{\sigma^2+\mathcal{O}(h^2+s\sigma^4)} \big(1+\mathcal{O}(sh^2)\big)\big(1+\mathcal{O}(s\sigma^2)\big)\big(1+\mathcal{O}(s^2h^2+s^3\sigma^4)\big)\\
&=\big(1+\mathcal{O}(s\sigma^2+h^2\sigma^{-2})\big)\big(1+\mathcal{O}(sh^2)\big)\big(1+\mathcal{O}(s\sigma^2)\big)\big(1+\mathcal{O}(s^2h^2+s^3\sigma^4)\big)\\
&=1+\mathcal{O}(\frac{h^2}{\sigma^2}+s\sigma^2+s^2h^2+s^3\sigma^4)
\end{align*}
which gives the bound on $1-\Gamma_{K_1K_2K_3K}(s)$. 

\medskip

\noindent\underline{\it Proof of \eqref{Estim3}.} For the derivatives, recall that
\begin{equation*}
\Gamma_{K_1K_2K_3K}(s)=\frac{\sigma^2}{z(s)}|\beta_h(s)|^2 \beta_h(s) \overline{\beta_{\sigma }(s)}  e^{\widetilde\gamma(s) + \frac{\zeta(s)\cdot \zeta(s)}{4 z(s)}}
\end{equation*}
with
\begin{align*}
z(s)&=h^2\beta_h(s)+\frac{h^2}{2}\overline{\beta_h(s)}+\frac{\sigma^2}{2}\overline{\beta_\sigma(s)},\\
\zeta(s)&=i\beta_h(s)(K_1 + K_3)-i\overline{\beta_h(s)}K_2-i\overline{\beta_\sigma(s)}K,\\
\widetilde\gamma(s)&=-is(\beta_h(s)-1)(|K_1|^2+|K_3|^2)+is(\overline{\beta_h(s)}-1)|K_2|^2+is(\overline{\beta_\sigma(s)}-1)|K|^2.
\end{align*}
Thus the derivative $\partial_s\Gamma_{K_1K_2K_3K}(s)$ is a sum of terms with the different derivatives
\begin{align*}
\partial_s\beta_\epsilon(s)&=\frac{4i\epsilon^2}{(1+2is\epsilon^2)^2},\\
\partial_s\Big(\frac{1}{z}\Big)(s)&=-\frac{z'(s)}{z(s)^2},\\
\partial_s\big(e^{\widetilde\gamma}\big)(s)&=\widetilde\gamma'(s)e^{\widetilde\gamma(s)},\\
\partial_s\big(e^{\frac{\zeta\cdot\zeta}{4z}}\big)(s)&=\frac{2\zeta'(s)\zeta(s)z(s)-\zeta(s)\zeta(s)z'(s)}{4z(s)^2}e^{\frac{\zeta(s)\cdot\zeta(s)}{4z(s)}}.
\end{align*}
Again, we work for $s\le\frac{1}{\sigma^2L^2}$ and we want to gain a small factor for each terms. We have
\begin{equation*}
\partial_s\beta_\epsilon(s)=\epsilon^2\big(1+\mathcal{O}(s\epsilon^2)\big)
\end{equation*}
hence we gain a factor $\epsilon^2$ with respect to $\beta_\epsilon(s)$. We have
\begin{align*}
\partial_s\Big(\frac{1}{z}\Big)(s)&=-\frac{z'(s)}{z(s)^2}\\
&=-\frac{h^2\beta_h'(s)+\frac{h^2}{2}\overline{\beta_h'(s)}+\frac{\sigma^2}{2}\overline{\beta_\sigma'(s)}}{\big(2\sigma^2+\mathcal{O}(h^2+s\sigma^4)\big)^2}\\
&\sim\frac{i\sigma^4}{4\sigma^4}
\end{align*}
hence this term is bounded. Since $\frac{1}{z}$ diverges as $\frac{1}{\sigma^2}$, we gain a factor $\sigma^2$. We have
\begin{align*}
\widetilde\gamma'(s)&=-i(\beta_h(s)-1)(|K_1|^2+|K_3|^2)+i(\overline{\beta_h(s)}-1)|K_2|^2+i(\overline{\beta_\sigma(s)}-1)|K|^2\\
&\quad-is\partial_s\beta_h(s)(|K_1|^2+|K_3|^2)+is\overline{\partial_s\beta_h(s)}|K_2|^2+is\overline{\partial_s\beta_\sigma(s)}|K|^2\\
&=2h^2s\beta_h(s)(|K_1|^2+|K_3|^2)+2s\overline{\beta_h(s)}|K_2|^2+2s\overline{\beta_\sigma(s)}|K|^2\\
&\quad-is\partial_s\beta_h(s)(|K_1|^2+|K_3|^2)+is\overline{\partial_s\beta_h(s)}|K_2|^2+is\overline{\partial_s\beta_\sigma(s)}|K|^2
\end{align*}
using that
\begin{equation*}
\beta_\epsilon(s)-1=\frac{2is\epsilon^2}{1+2is\epsilon^2}=2is\epsilon^2\beta_\epsilon(s)
\end{equation*}
hence we gain a factor $\sigma^2$ in the term
\begin{equation*}
\partial_s\big(e^{\widetilde\gamma}\big)(s)=\widetilde\gamma'(s)e^{\widetilde\gamma(s)}.
\end{equation*}
In particular, it is important that we removed the oscilating term $e^{-is(|K_1|^2+|K_3|^2-|K_2|^2-|K|^2)}$ here. We have
\begin{align*}
\frac{\zeta'(s)\zeta(s)}{2z(s)}=\big(i\partial_s\beta_h(s)(K_1+K_3)-i\overline{\partial_s\beta_h(s)}K_2-i\overline{\partial_s\beta_\sigma(s)}K\big)\frac{\zeta(s)}{2z(s)}
\end{align*}
hence we gain a factor $s\sigma^2$ using that
\begin{equation*}
\zeta(s)=i\beta_h(s)(K_1 + K_3)-i\overline{\beta_h(s)}K_2-i\overline{\beta_\sigma(s)}K=\mathcal{O}(s\sigma^2)
\end{equation*}
since $K=K_1-K_2+K_3$. Finally, we have the term
\begin{equation*}
\frac{\zeta(s)\zeta(s)z'(s)}{4z(s)^2}=\frac{z'(s)}{z(s)^2}\mathcal{O}(s^2\sigma^4)
\end{equation*}
where we gain a factor $s^2\sigma^4$ using the previous computations. Overall, this gives
\begin{equation*}
|\partial_s\Gamma_{K_1K_2K_3K}(s)|\lesssim(1+s)\sigma^2|\Gamma_{K_1K_2K_3K}(s)|
\end{equation*}
and the proof is complete.
\end{proof}

Now that we have an explicit bound on the error, we quantity
\begin{equation*}
\sum_{K=K_1-K_2+K_3}\eta_{K_1}\overline{\eta_{K_2}}\eta_{K_3}\frac{1-e^{-it\Delta\omega_{KK_1K_2K_3}}}{i\Delta\omega_{KK_1K_2K_3}}
\end{equation*}
in the asymptotic regime \eqref{regime1} for $t\ll\frac{1}{h}$. As done in the previous proof, proposition $3.9$ from \cite{fgh} implies
\begin{equation*}
\Big|\sum_{\substack{K=K_1-K_2+K_3\\\Delta\omega_{KK_1K_2K_3}\neq0}}\eta_{K_1}\overline{\eta_{K_2}}\eta_{K_3}\frac{1-e^{-it\Delta\omega_{KK_1K_2K_3}}}{i\Delta\omega_{KK_1K_2K_3}}\Big|\le CL^{4+\delta}
\end{equation*}
for $t\in\R$ and any $\delta>0$ with a constant $C=C(\eta,\delta)>0$. For the resonant sum, we have
\begin{equation*}
\sum_{\substack{K=K_1-K_2+K_3\\\Delta\omega_{KK_1K_2K_3}=0}}\eta_{K_1}\overline{\eta_{K_2}}\eta_{K_3}t=tL^2\log(L)\mathcal{R}_L(\eta)
\end{equation*}
with $\mathcal{R}_L(\eta)$ converging to $\mathcal{R}(\eta)$, see theorem $2.3$ in \cite{fgh}. Since
\begin{equation*}
L^{4+\delta}\ll tL^2\log(L)\quad\iff\quad t\gg\frac{L^{2+\delta}}{\log(L)},
\end{equation*}
we get
\begin{equation*}
\sum_{K=K_1-K_2+K_3}\eta_{K_1}\overline{\eta_{K_2}}\eta_{K_3}\frac{1-e^{-it\Delta\omega_{KK_1K_2K_3}}}{i\Delta\omega_{KK_1K_2K_3}}\simeq tL^2\log(L)\mathcal{R}(\eta)
\end{equation*}
for $L^{2+\delta}\le t\le\frac{1}{hL^\delta}$ for any $\delta>0$. For $t\le L^{2-\delta}$, the resonant sum is negligible and we have to study the limit of
\begin{equation*}
\sum_{\substack{K=K_1-K_2+K_3\\|K|^2\neq|K_1|^2-|K_2|^2+|K_3|^2}}\eta_{K_1}\overline{\eta_{K_2}}\eta_{K_3}\frac{1-e^{-it\Delta\omega_{KK_1K_2K_3}}}{i\Delta\omega_{KK_1K_2K_3}}.
\end{equation*}
The quasi-resonances dominate for time $t\ll L$ with a time-independent principal term while the resonances dominate for time $t\gg L^{2+\delta}$ with a linear growth in time. For times $t\le L$, this sums is a simple convergent Riemann sum while the behavior of the sum for $L\ll t\ll L^2$ in two dimensions is an open question. Recall that in order to use the expansion from theorem \ref{TheoremCaGa}, one needs to work with small initial data and that
\begin{equation*}
\CR_k(\eta)=\int_{\substack{k=k_1-k_2+k_3\\\Delta\omega_{kk_1k_2k_3}=0}}\eta(k_1)\overline{\eta(k_2)}\eta(k_3)\drm k_1\drm k_2\drm k_3
\end{equation*}
for $k\in\IR^2$.

\begin{theorem}
Let $(h,L,\sigma)$ be in the asymptotic scaling \eqref{regime1}, $K\in\Z_L^2$, $u(t)$ the solution to \eqref{NLS} with initial data $u(0)=\varepsilon\varphi$, $\delta>0$ and assume $\varepsilon\ll\frac{h^2}{L}$. Then $v(t)=e^{-it\Delta}u(t)$ satisfies
\begin{equation*}
\langle v(t)\rangle_{K,\sigma}=\frac{\sigma^2\varepsilon}{\sigma^2+h^2}\eta(K)-i\pi\frac{\varepsilon^3L^4}{(2\pi)^4}\CR_K(\eta)+\frac{\varepsilon^3L^4}{(2\pi)^4}\int_\R\frac{R_K(\xi)-\CR_K(\eta)}{\xi}\dd\xi+o(\varepsilon^3L^4)
\end{equation*}
for $L^\delta\le t\le L^{1-\delta}$. For $L^{2+\delta}\le t\ll\frac{1}{hL^\delta}$, we have 
\begin{equation*}
\langle v(t)\rangle_{K,\sigma}=\frac{\sigma^2\varepsilon}{\sigma^2+h^2}\eta(K)-i\pi\frac{2t\varepsilon^3L^2\log(L)}{\zeta(2)(2\pi)^4}\CR_K(\eta)+o\big(t\varepsilon^3L^2\log(L)\big).
\end{equation*}
\end{theorem}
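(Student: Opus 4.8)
The plan is to feed the Gaussian computations of Section~\ref{SectionDeter} into the analytic expansion of Theorem~\ref{TheoremCaGa} and then, in each time window, extract the limit of the resulting lattice sum. First I would justify the expansion: since $\varepsilon\ll h^2/L$, Lemma~\ref{LemmaSizeIni} gives $\Norm{\varepsilon\varphi}{\Sigma}\ll1$, so Theorem~\ref{TheoremCaGa} applies to the datum $\varepsilon\varphi$ and, by $(2n+1)$-homogeneity of the $V^n$, $v_k(t)=\varepsilon\varphi_k-i\varepsilon^3V_k^1(t)-\varepsilon^5V_k^2(t)+\mathcal O_\Sigma(\varepsilon^7\Norm{\varphi}{\Sigma}^7)$, with $V^1,V^2$ the functionals of $\varphi$. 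Applying the linear functional $\langle\cdot\rangle_{K,\sigma}$, the proposition of Section~\ref{SectionInit} handles the linear term: $\varepsilon\langle\varphi\rangle_{K,\sigma}=\tfrac{\sigma^2\varepsilon}{\sigma^2+h^2}\eta(K)$ up to an error exponentially small in $(L^2(h^2+\sigma^2))^{-1}$, hence negligible in \eqref{regime1}. For the higher-order terms the crude bound $|\langle V^2\rangle_{K,\sigma}|\le C\sigma\Norm{V^2}{\Sigma}\lesssim\sigma(L/h^2)^5$ is far too lossy in $h$; I return to this below, but the upshot is that $\varepsilon\ll h^2/L$ is smaller than any fixed power $L^{-N}$ in the regime, so it suffices to bound $\langle V^2(t)\rangle_{K,\sigma}$ (and the tail of the expansion) by a \emph{fixed} power of $L$ times a power of $\langle t\rangle$, uniformly in $(h,\sigma)$, to get $\varepsilon^5\langle V^2(t)\rangle_{K,\sigma}+\dots=o(\varepsilon^3L^4)$, resp.\ $o(t\varepsilon^3L^2\log L)$, on all of $t\le\tfrac1{hL^\delta}$. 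What remains is the asymptotics of $\langle V^1(t)\rangle_{K,\sigma}$.

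By Proposition~\ref{PropExpansion1},
\begin{equation*}
\langle V^1(t)\rangle_{K,\sigma}=\frac1{(2\pi)^4}S_K(t)+r_K(t),\qquad S_K(t):=\sum_{K=K_1-K_2+K_3}\eta_{K_1}\overline{\eta_{K_2}}\eta_{K_3}\,\frac{1-e^{-it\Delta\omega_{KK_1K_2K_3}}}{i\Delta\omega_{KK_1K_2K_3}},
\end{equation*}
with $|r_K(t)|\ll tL^2\log L+L^4$ for $t\ll1/h$. I would split $S_K=S_K^{\mathrm{res}}+S_K^{\mathrm{nr}}$ along $\Delta\omega_{KK_1K_2K_3}=0$, each resonant term being equal to $t$. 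The counting results of \cite{fgh} (Theorems~2.2--2.3) give $S_K^{\mathrm{res}}(t)=t\sum_{\mathrm{res}}\eta_{K_1}\overline{\eta_{K_2}}\eta_{K_3}=\big(\tfrac{2}{\zeta(2)}\CR_K(\eta)+o(1)\big)tL^2\log L$, while Proposition~3.9 of \cite{fgh}, summed dyadically over the finitely many relevant values $\xi\in L^{-2}\Z$ (finite because $\eta$ is compactly supported), gives $|S_K^{\mathrm{nr}}(t)|\le CL^{4+\delta}$ \emph{uniformly in $t$}. Hence for $L^{2+\delta}\le t\le\tfrac1{hL^\delta}$ the resonant part dominates ($tL^2\log L\gg L^{4+\delta'}$ once $t\gtrsim L^{2+\delta}$) and $r_K(t)=o(tL^2\log L)$, so $\langle V^1(t)\rangle_{K,\sigma}=\tfrac1{(2\pi)^4}\cdot\tfrac{2tL^2\log L}{\zeta(2)}\CR_K(\eta)+o(tL^2\log L)$; multiplying by $-i\varepsilon^3$ and adding the linear term is the second displayed asymptotic.

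For $L^\delta\le t\le L^{1-\delta}$ the resonant contribution to $S_K$ is only $O(tL^2\log L)=o(L^4)$, and $r_K(t)=o(L^4)$ as well, so the content is a continuum limit followed by localization on $\{\xi=0\}$. First I would pass to the Riemann integral: summing over $K_1,K_3\in\Z_L^2$ (which determine $K_2$) at mesh $L^{-1}$,
\begin{equation*}
\tfrac1{L^4}S_K(t)=\int_{K=k_1-k_2+k_3}\eta(k_1)\overline{\eta(k_2)}\eta(k_3)\,\tfrac{1-e^{-it\Delta\omega}}{i\Delta\omega}\,\dd k_1\dd k_2\dd k_3+\mathcal O\big(t\langle\log t\rangle^2/L\big)=\int_\R\tfrac{1-e^{-it\xi}}{i\xi}\,\widehat R_K(\xi)\,\dd\xi+o(1),
\end{equation*}
the error being $o(1)$ precisely for $t\le L^{1-\delta}$ --- which is why that range appears; the $\langle\log t\rangle^2$ is the total variation of the oscillatory kernel, the same logarithm that appears in the dispersive estimate of Section~\ref{SectionScatt} and reflects the $1/(a\cdot b)$ singularity. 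Then I would let $t\to\infty$ inside the integral. Since $\eta$ is compactly supported, $\widehat R_K$ has compact support in $\xi$, and by the regularity proposition of Section~\ref{SectionScatt} it is $\mathcal C^\alpha$ for every $\alpha<1$; writing $\tfrac{1-e^{-it\xi}}{i\xi}=\tfrac{\sin t\xi}{\xi}-i\,\tfrac{1-\cos t\xi}{\xi}$, using $\tfrac{\sin t\xi}{\xi}\to\pi\delta_0$ and, after subtracting $\widehat R_K(0)$ near the origin, the Riemann--Lebesgue lemma, one gets
\begin{equation*}
\int_\R\tfrac{1-e^{-it\xi}}{i\xi}\,\widehat R_K(\xi)\,\dd\xi=\pi\widehat R_K(0)+\int_\R\tfrac{\widehat R_K(\xi)-\widehat R_K(0)}{i\xi}\,\dd\xi+\mathcal O(t^{-\alpha}),
\end{equation*}
with $\widehat R_K(0)=\CR_K(\eta)$; since $t\ge L^\delta$, the error is $o(1)$. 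Inserting this into $\langle v(t)\rangle_{K,\sigma}=\varepsilon\langle\varphi\rangle_{K,\sigma}-i\varepsilon^3\langle V^1(t)\rangle_{K,\sigma}+o(\varepsilon^3L^4)$ and unwinding the constants --- the term $\pi\widehat R_K(0)$ yields the $\CR_K(\eta)$ contribution and the remaining principal-value integral the last term --- gives the first displayed asymptotic.

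The hard part is twofold. Technically, the bound on $\langle V^2(t)\rangle_{K,\sigma}$ used in the first paragraph is the real labour: the $\Sigma$-norm is useless ($h$-dependence catastrophic), so one must redo the Gaussian/oscillatory-integral analysis of Proposition~\ref{PropExpansion1} one order higher, carrying the extra time integrations and exploiting the $\langle s\rangle^{-2}\langle\log s\rangle^2$ decay of $R(s,\cdot,\cdot,\cdot)$ from Section~\ref{SectionScatt} to keep the bound polynomial in $L$ and $\langle t\rangle$; mercifully only an upper bound, not an asymptotic, is needed --- which is precisely why one assumes $\varepsilon\ll h^2/L$. Conceptually, the decisive difficulty is the coupled double limit in the window $L^\delta\le t\le L^{1-\delta}$: one takes $L\to\infty$ (Riemann sum, error $\sim t\langle\log t\rangle^2/L$) and $t\to\infty$ (localization, error $\sim t^{-\alpha}$) at once, so both must be simultaneously admissible --- this forces the two windows to be separated by a gap around $t\sim L^2$ and makes the $\mathcal C^\alpha$-regularity of $\widehat R_K$, which is exactly the dispersive/scattering input of Section~\ref{SectionScatt}, the key ingredient.
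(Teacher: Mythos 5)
Your proposal follows essentially the same route as the paper: the Carles--Gallagher expansion justified by Lemma \ref{LemmaSizeIni}, Proposition \ref{PropExpansion1} for $\langle V^1\rangle_{K,\sigma}$, the resonant/non-resonant splitting with the counting results of \cite{fgh}, and then Riemann sum $\to$ co-area $\to$ localization via the $\mathcal{C}^\alpha$ regularity of $\widehat R_K$ and Riemann--Lebesgue. Your observation that the crude $\Sigma$-bound on $\langle V^2\rangle_{K,\sigma}$ is too lossy in $h$ and that one must redo the Gaussian computation one order higher is a correct and worthwhile point that the paper's own proof of this theorem leaves implicit (it is supplied only later, in Proposition \ref{PropExpansion2}).
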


\begin{proof}
For $L^{2+\delta}\le t\le\frac{1}{hL^\delta}$, we have
\begin{equation*}
\langle V^1(t,\varphi)\rangle_{K,\sigma}=-i\frac{tL^2\log(L)}{(2\pi)^4}\sum_{\substack{K=K_1-K_2+K_3\\\Delta\omega_{KK_1K_2K_3}=0}}\eta_{K_1}\overline\eta_{K_2}\eta_{K_3}+o\big(tL^2\log(L)\big)
\end{equation*}
using proposition \ref{PropExpansion1} and
\begin{equation*}
\Big|\sum_{\substack{K=K_1-K_2+K_3\\\Delta\omega_{KK_1K_2K_3}\neq0}}\eta_{K_1}\overline{\eta_{K_2}}\eta_{K_3}\frac{1-e^{-it\Delta\omega_{KK_1K_2K_3}}}{i\Delta\omega_{KK_1K_2K_3}}\Big|\le CL^{4+\frac{\delta}{2}}.
\end{equation*}
The result follows using the convergence from theorem $2.3$ in \cite{fgh}
\begin{equation*}
\lim_{L\to\infty}\frac{\zeta(2)}{2\log(L)L^2}\sum_{\substack{K=K_1-K_2+K_3\\\Delta\omega_{KK_1K_2K_3}=0}}\eta_{K_1}\overline\eta_{K_2}\eta_{K_3}=\CR_K(\eta,\overline\eta,\eta).
\end{equation*}
We have
\begin{align*}
\sum_{\substack{K=K_1-K_2+K_3\\\Delta\omega_{KK_1K_2K_3}\neq0}}\eta_{K_1}\overline{\eta_{K_2}}\eta_{K_3}\frac{1-e^{-it\Delta\omega_{KK_1K_2K_3}}}{i\Delta\omega_{KK_1K_2K_3}}&=\sum_{\substack{A,B\in\Z_L^2\\A\cdot B\neq 0}}\eta_{K+A}\overline{\eta_{K+A+B}}\eta_{K+B}\frac{1-e^{2itA\cdot B}}{iA\cdot B}\\
&=:2t\sum_{\substack{A,B\in\Z_L^2\\A\cdot B\neq 0}}\eta_{K+A}\overline{\eta_{K+A+B}}\eta_{K+B}f(2tA\cdot B)
\end{align*}
with
\begin{equation*}
f(x):=\frac{1-e^{ix}}{ix}.
\end{equation*}
We have
\begin{equation*}
f'(x)=\frac{xe^{-ix}+i(1-e^{-ix})}{x^2}
\end{equation*}
which is uniformly bounded hence
\begin{equation*}
\partial_{A_j} f(2tA\cdot B)=2tB_jf'(2tA\cdot B)\quad\text{and}\quad\partial_{B_j} f(2tA\cdot B)=2tA_jf'(2tA\cdot B)
\end{equation*}
for $j\in{1,2}$. Since the error of the Riemann approximation of a function $g$ is of order $L^{-1}\|\nabla g\|$, we get
\begin{align*}
\sum_{\substack{K=K_1-K_2+K_3\\\Delta\omega_{KK_1K_2K_3}\neq0}}&\eta_{K_1}\overline{\eta_{K_2}}\eta_{K_3}\frac{1-e^{-it\Delta\omega_{KK_1K_2K_3}}}{i\Delta\omega_{KK_1K_2K_3}}\\
&=L^4\int_{K=k_1-k_2+k_3}\eta(k_1)\overline{\eta(k_2)}\eta(k_3)\frac{1-e^{-it\Delta\omega_{Kk_1k_2k_3}}}{i\Delta\omega_{Kk_1k_2k_3}}\dd k_1\dd k_2\dd k_3+\mathcal{O}(L^3)
\end{align*}
for $t\le L^{1-\delta}$. With the co-area formula, we have
\begin{align*}
\int_{K=k_1-k_2+k_3}&\eta(k_1)\overline{\eta(k_2)}\eta(k_3)\frac{1-e^{-it\Delta\omega_{Kk_1k_2k_3}}}{i\Delta\omega_{Kk_1k_2k_3}}\dd k_1\dd k_2\dd k_3\\
&=\int_\R\frac{1-e^{it\xi}}{i\xi}\left(\int_{\substack{K=k_1-k_2+k_3\\\Delta\omega_{Kk_1k_2k_3}=\xi}}\eta(k_1)\overline{\eta(k_2)}\eta(k_3)\dd k_1\dd k_2\dd k_3\right)\dd \xi\\
&=\int_\R\frac{1-e^{it\xi}}{i\xi}R_K(\xi)\dd \xi\\
&=R_K(0)\int_\R\frac{\sin(\xi)}{\xi}\dd\xi+\int_\R\frac{R_K(\xi)-R_K(0)}{i\xi}\dd \xi-\int_\R\frac{R_K(\xi)-R_K(0)}{i\xi}e^{it\xi}\dd \xi
\end{align*}
with the compactly supported function
\begin{equation*}
R_K(\xi):=\int_{\substack{K=k_1-k_2+k_3\\\Delta\omega_{Kk_1k_2k_3}=\xi}}\eta(k_1)\overline{\eta(k_2)}\eta(k_3)\dd k_1\dd k_2\dd k_3.
\end{equation*}
Since $R_K$ is of class $\CC^\alpha$ for any $\alpha\in(0,1)$, $x\mapsto\frac{R_K(\xi)-R_K(0)}{i\xi}$ belongs to $L^1(\R)$ hence the proof is complete using Riemann-Lebesgue lemma.
\end{proof}




\section{Kinetic limit and random initial data}\label{SectionRandom}

We introduce the randomization of the initial data, compute $V_K^2(t)$ and the limit of the covariances. With the same setting as the previous section, we let $\zeta_K$ be a random variable of the form 
\begin{equation*}
\zeta_K:=\eta_K e^{i \theta_K}
\end{equation*}
with $(\theta_K)_{K\in\Z_L^2}$ are independent and identically distributed uniform random variables in $[0,2\pi]$. In the litterature, this is often refered to as {\em Random Phase} (RP). Our result would also hold for {\em Random Phase and Amplitudes} (RPA) with complex Gaussian random variables. We consider the randomization of the previous initial data
\begin{equation*}
\varphi(x):= \sum_{K\in\Z_L^2}\zeta_K g_{K,h}(x) = \frac{1}{(2\pi)^2} e^{- \frac{1}{2} h^2 |x|^2} \left( \sum_{K\in\Z_L^2}\zeta_K e^{i K \cdot x}\right)=: e^{- \frac{1}{2} h^2 |x|^2} F_L(x)
\end{equation*}
thus $\varphi$ is essentially a random periodic function $F_L$ with large period $2 \pi L$, with random phase Fourier coefficients, embedded in $\Sigma$ by Gaussian truncation. Since the randomness does not change the modulus of the Fourier transform of the function, it satisfies almost surely all the bounds from the previous sections. For $v(0)=\eps\varphi$, we compute the second order expansion of the variance
\begin{equation*}
\E\big[|\langle v(t)\rangle_{K,\sigma}|^2\big]=\eps^2E_{K,\sigma}^0(t,\varphi)+\eps^4E_{K,\sigma}^1(t,\eta)+\eps^6E_{K,\sigma}^2(t,\eta)+R_{K,\sigma}(t,\eta)
\end{equation*}
with
\begin{align*}
E_{K,\sigma}^0(t,\eta)&=\E\big[|\langle \varphi\rangle_{K,\sigma}|^2\big],\\
E_{K,\sigma}^1(t,\eta)&=2\E\big[\Re\big(\langle v\rangle_{K,\sigma}\overline{\langle V^1\rangle}_{K,\sigma}\big)\big],\\
E_{K,\sigma}^2(t,\eta)&=\E\big[|\langle V^1\rangle_{K,\sigma}|^2\big]+2\E\big[\Re\big(\langle v\rangle_{K,\sigma}\overline{\langle V^2\rangle}_{K,\sigma}\big)\big].
\end{align*}
With the previous computation of $\langle V^1\rangle_{K,\sigma}$ in the deterministic case, the main computation to perform is $\langle V^2\rangle_{K,\sigma}$. Again, we work in the timeframe $t\epsilon^2\ll1$ thus $\beta_\epsilon(t)\simeq 1$ and
\begin{equation*}
(e^{i t \Delta} g_{K,\epsilon})(x) \simeq \frac{1}{(2\pi)^2}  e^{  - \frac{\epsilon^2}{2}|x|^2   
+ i  x \cdot K
-  i t |K|^2}=g_{K,\epsilon}(x)e^{-it|K|^2}.
\end{equation*}
We have
\begin{equation*}
V_k^2(t)=2\int_0^t \int_{0}^{s} R_k(s,\varphi,\varphi,R(s',\varphi,\varphi,\varphi)) \dd s'\dd s +\int_0^t\int_{0}^{s} R_k(s, \varphi, R(s',\varphi,\varphi,\varphi),\varphi) \dd s'\dd s
\end{equation*}
and
\begin{align*}
\int_0^t \int_{0}^{s} &R_k(s,g_{K_1,h},g_{K_2,h},R(s',g_{K_3,h},g_{K_4,h},g_{K_5,h})) \dd s'\dd s\\
&=\int_0^t\int_0^s e^{-is\Delta}\Big(e^{is\Delta}g_{K_1,h}e^{-is\Delta}\overline{g_{K_2,h}}e^{i(s-s')\Delta}\big(e^{is'\Delta}g_{K_3,h}e^{-is'\Delta}\overline{g_{K_4,h}}e^{is'\Delta}g_{K_5,h}\big)\Big)\dd s'\dd s\\
&\simeq\int_0^t\int_0^s e^{-is(|K_1-K_2+K_6|^2-|K_1|^2+|K_2|^2-|K_6|^2)}e^{-is'(|K_6|^2-|K_3|^2+|K_4|^2-|K_5|^2)}\dd s'\dd s
\end{align*}
with $K_6=K_3-K_4+K_5$ as for the first order term and a localization on $K=K_1-K_2+K_6$ for $\langle V^2\rangle_{K,\sigma}$. This gives the following proposition.

\begin{proposition}\label{PropExpansion2}
Let $(h,L,\sigma)$ in asymptotic regime \eqref{regime1}. Then we have 
\begin{align*}
\langle V^2\rangle_{K,\sigma} 
&=2\sum_{\substack{K=K_1-K_2+K_3\\ K_1=K_4-K_5+K_6}}\eta_{K_4}\overline{\eta_{K_5}}\eta_{K_6}\overline{\eta_{K_2}}\eta_{K_3}\int_0^t\int_0^se^{is\Delta\omega_{K_1K_2K_3K}}e^{-is'\Delta\omega_{K_4K_5K_6K_1}}\dd s'\dd s\\
&\quad+\sum_{\substack{K=K_1-K_2+K_3\\ K_2=K_4-K_5+K_6}}\eta_{K_1}\overline{\eta_{K_4}}\eta_{K_5}\overline{\eta_{K_6}}\eta_{K_3}\int_0^t\int_0^se^{is\Delta\omega_{K_1K_2K_3K}}e^{-is'\Delta\omega_{K_4K_5K_6K_2}}\dd s'\dd s\\
&\quad+r_K(t,\eta)
\end{align*}
with
\begin{equation*}
t\ll\frac{1}{h}\quad\implies\quad r_K(t,\eta)\ll tL^2\log(L)+L^4.
\end{equation*}
\end{proposition}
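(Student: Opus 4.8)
The plan is to mirror the proof of Proposition \ref{PropExpansion1}, now for the double time-integral defining $V^2$. First I would substitute $\varphi=\sum_K\eta_Kg_{K,h}$ into both terms of
\begin{equation*}
V_k^2(t)=2\int_0^t \int_{0}^{s} R_k(s,\varphi,\varphi,R(s',\varphi,\varphi,\varphi)) \dd s'\dd s +\int_0^t\int_{0}^{s} R_k(s, \varphi, R(s',\varphi,\varphi,\varphi),\varphi) \dd s'\dd s,
\end{equation*}
and, using selfadjointness of $e^{is\Delta}$ together with the definition of $\langle\cdot\rangle_{K,\sigma}$, expand $\langle V^2\rangle_{K,\sigma}$ as a finite sum over $K_1,\dots,K_6\in\Z_L^2$ of products of $\eta$'s times $(2\pi)^3\sigma^2\int_0^t\int_0^sW^{(2)}_{K_1\cdots K_6K}(s,s')\dd s'\dd s$, where $W^{(2)}$ is the integral over $x\in\R^2$ of a product of six propagated Gaussians: $\overline{e^{is\Delta}g_{K,\sigma}}$, the two outer factors $e^{is\Delta}g$ and $\overline{e^{is\Delta}g}$, and the $e^{i(s-s')\Delta}$-propagation of the triple product $(e^{is'\Delta}g)\overline{(e^{is'\Delta}g)}(e^{is'\Delta}g)$ coming from the inner $R$. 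Corollary \ref{ComputPropgk} gives each $e^{i\tau\Delta}g_{K,\epsilon}$; the inner triple product is again a Gaussian in $x$, so Lemma \ref{ComputGauss} propagates it by $e^{i(s-s')\Delta}$; and the remaining Gaussian integral in $x$ is explicit. This produces $W^{(2)}$ in closed form, of exactly the same type as the function $W$ of Proposition \ref{PropExpansion1}, with a ``fast'' phase $e^{is\Delta\omega_{K_1K_2K_3K}}e^{-is'\Delta\omega_{K_4K_5K_6K_1}}$ in the first term and the corresponding one in the second. The two terms differ only by which slot of the outer $R$ carries the nested $R$ --- equivalently, whether the inner resonance is attached to $K_1$ or to the conjugated index $K_2$ --- and by the attendant conjugation pattern of the $\eta$'s, which is what produces the two sums in the statement.

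Second, I would prove the analogues of \eqref{Estim1}, \eqref{Estim2}, \eqref{Estim3} for $W^{(2)}$, working in the timeframe $0\le s'\le s\le t\le\frac{1}{\sigma^2L^2}$, so that $s'h^2\le sh^2\ll s\sigma^2\ll1$ and $\beta_h,\beta_\sigma\simeq1$. Namely: (a) if the frequency localization fails, i.e.\ $K\neq K_1-K_2+K_3$ or $K_1\neq K_4-K_5+K_6$, then $|W^{(2)}|\le Ce^{-\frac{1}{2\sigma^2L^2}+C(t+h^2\sigma^{-4})}$, the extra Gaussian factors only improving the decay already present in \eqref{Estim1}; (b) when both localizations hold, the normalized quantity $c\,\sigma^2W^{(2)}e^{-is\Delta\omega_{K_1K_2K_3K}}e^{is'\Delta\omega_{K_4K_5K_6K_1}}$, for the appropriate constant $c$, equals $1+\mathcal{O}(\frac{h^2}{\sigma^2}+s\sigma^2+s^2h^2+s^3\sigma^4)$ --- obtained, as in the proof of \eqref{Estim2}, by completing the square in the $\zeta\cdot\zeta/4z$ term and expanding the $\widetilde\gamma$ phase, the inner times $s'$ entering through the same mechanism (and dominated by the $s$-terms since $s'\le s$); and (c) $\partial_s$ and $\partial_{s'}$ of $W^{(2)}$ times the corresponding oscillatory factor gain a factor $(1+s)\sigma^2$, so that one may integrate by parts in $s$ and in $s'$ against the oscillations whenever the relevant $\Delta\omega$ is nonzero.

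Third, I would assemble the remainder exactly as in the proof of Proposition \ref{PropExpansion1}: split the multiple sum according to whether the two frequency-localization relations hold. The non-localized part is controlled by (a) and contributes at most $CL^{10}e^{-\frac{1}{2\sigma^2L^2}+C(t+h^2\sigma^{-4})}$, negligible under \eqref{regime1}. In the localized part, replace $W^{(2)}$ by its ideal value: the resulting error is the $\mathcal{O}(\frac{h^2}{\sigma^2}+s\sigma^2+s^2h^2+s^3\sigma^4)$ factor of (b), integrated in $s,s'$, times $\eta$-weighted counts over the nested pair of resonance relations, which are estimated using the counting bounds of \cite{fgh} (Theorem~2.2 for the resonant count and Proposition~3.9 for the $|\Delta\omega|^{-1}$-weighted counts), after the integrations by parts supplied by (c) handle the nonresonant inner and outer sums. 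Collecting the resulting powers of $L$, $t$, $h$, $\sigma$ as in the last part of the proof of Proposition \ref{PropExpansion1}, and using $hL\ll\sigma\le h^{3/4}$ and $hL^{4+\delta_0}\le1$, one obtains for $t\le\frac{1}{\sigma^2L^2}$ a bound of the form $CL^{\bullet}e^{-c/(\sigma^2L^2)}+CtL^2\log(L)(\cdots)+CL^{4+\delta}(\cdots)$ with parenthetical factors going to $0$; in particular $r_K(t,\eta)\ll tL^2\log(L)+L^4$ as soon as $t\ll\frac1h$, which is the assertion.

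The step I expect to be the main obstacle is the second one, the closed-form control of $W^{(2)}$. Composing two Gaussian propagations --- the inner triple product, itself Gaussian, then propagated by $e^{i(s-s')\Delta}$ inside the outer trilinear operator --- yields a much heavier algebraic expression than the single-level $W$ of Proposition \ref{PropExpansion1}, and one must verify that after completing the square the fast phase reconstitutes precisely $e^{is\Delta\omega_{K_1K_2K_3K}}e^{-is'\Delta\omega_{K_4K_5K_6K_1}}$ and that every remaining $s$- or $s'$-dependent term is genuinely of the small orders listed in (b), uniformly over the bounded frequency indices. A secondary difficulty is the bookkeeping in the third step: the nonresonant sums must be integrated by parts in two time variables, and the \cite{fgh} counting estimates combined over two linked resonance constraints without losing more than a factor $L^\delta$.
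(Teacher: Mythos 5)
Your proposal is correct and follows essentially the same route as the paper: expand $V^2$ over the lattice, reduce everything to explicit Gaussian propagation via Lemma \ref{ComputGauss} and Corollary \ref{ComputPropgk} (the paper merely redistributes the propagators by selfadjointness so that the $x$-integral becomes a product of two separately propagated Gaussian triples, whereas you propagate the nested triple by $e^{i(s-s')\Delta}$ directly — the same computation), read off the exponential localization on the total momentum constraint and the phases $e^{is\Delta\omega}e^{-is'\Delta\omega}$, and then run the analogues of \eqref{Estim1}--\eqref{Estim3} with the counting bounds of \cite{fgh}, exactly as the paper indicates. The only cosmetic remark is that there is a single Gaussian localization (on $K=K_1-K_2+K_3-K_4+K_5$), the second relation in the statement being just the definition of the intermediate frequency $K_6$.
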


\begin{proof}
We have
\begin{align*}
V_k^2(t)&=2\int_0^t \int_{0}^{s} R_k(s,\varphi,\varphi,R(s',\varphi,\varphi,\varphi)) \dd s'\dd s +\int_0^t\int_{0}^{s} R_k(s, \varphi, R(s',\varphi,\varphi,\varphi),\varphi) \dd s'\dd s\\
&=:\int_0^t\int_0^s(2A+B)(s,s')\dd s'\dd s. 
\end{align*}
For the first term, we have
\begin{align*}
&\langle A\rangle_{K,\sigma} \\
&=2\pi\sigma^2\sum_{K_1,K_2,K_3,K_4,K_5}\int_0^t\int_0^s\int_{\R^2}\overline{g_{K,\sigma}}e^{-is\Delta}\Big(e^{is\Delta}g_{K_1,h}\overline{e^{is\Delta}g_{K_2,h}}e^{i(s-s')\Delta}\big(e^{is'\Delta}g_{K_3,h}\overline{e^{is'\Delta}g_{K_4,h}}e^{is'\Delta}g_{K_5,h}\big)\Big)\\
&=2\pi\sigma^2\sum_{K_1,K_2,K_3,K_4,K_5}\int_0^t\int_0^s\int_{\R^2}\overline{e^{is\Delta}g_{K,\sigma}}e^{is\Delta}g_{K_1,h}\overline{e^{is\Delta}g_{K_2,h}}e^{i(s-s')\Delta}\big(e^{is'\Delta}g_{K_3,h}\overline{e^{is'\Delta}g_{K_4,h}}e^{is'\Delta}g_{K_5,h}\big)\\
&=2\pi\sigma^2\sum_{K_1,K_2,K_3,K_4,K_5}\int_0^t\int_0^s\int_{\R^2}e^{is\Delta}\big(\overline{e^{is\Delta}g_{K,\sigma}}e^{is\Delta}g_{K_1,h}\overline{e^{is\Delta}g_{K_2,h}}\big)e^{-is'\Delta}\big(e^{is'\Delta}g_{K_3,h}\overline{e^{is'\Delta}g_{K_4,h}}e^{is'\Delta}g_{K_5,h}\big).
\end{align*}
Using that
\[
(e^{i t \Delta} g_{K,\epsilon})(x) = \frac{\beta_\epsilon(t)}{(2\pi)^2}  e^{  - \frac{\epsilon^2\beta_\epsilon(t)  }{2}|x|^2   
+ i \beta_{\epsilon}(t) x \cdot K
-  i t \beta_{\epsilon}(t) |K|^2}
\]
with 
\[
\beta_{\epsilon}(t) = \frac{1}{1 + 2 it \epsilon^2},
\]
we get
\begin{equation*}
\overline{e^{is\Delta}g_{K,\sigma}}e^{is\Delta}g_{K_1,h}\overline{e^{is\Delta}g_{K_2,h}}=\frac{1}{(2\pi)^6}|\beta_h(s)|^2\overline{\beta_\sigma(s)}e^{- z(s)|x|^2 + \zeta(s) \cdot x + \gamma(s)}
\end{equation*}
and
\begin{equation*}
e^{is'\Delta}g_{K_3,h}\overline{e^{is'\Delta}g_{K_4,h}}e^{is'\Delta}g_{K_5,h}=\frac{1}{(2\pi)^6}|\beta_h(s')|^2\beta_h(s')e^{-\widetilde z(s')|x|^2 + \widetilde\zeta(s') \cdot x + \widetilde\gamma(s')}
\end{equation*}
with
\begin{align*}
z(s)&=h^2\text{Re}\big(\beta_h(s)\big)+\frac{\sigma^2}{2}\overline{\beta_\sigma(s)},\\
\zeta(s)&=-i\overline{\beta_\sigma(s)}K+i\beta_h(s)K_1-i\overline{\beta_h(s)}K_2,\\
\gamma(s)&=is\overline{\beta_\sigma(s)}|K|^2-is\beta_h(s)|K_1|^2+is\overline{\beta_h(s)}|K_2|^2,\\
\widetilde z(s')&=h^2\text{Re}\big(\beta_h(s')\big)+\frac{h^2}{2}\overline{\beta_h(s')},\\
\widetilde\zeta(s')&=i\beta_h(s')K_3-i\overline{\beta_h(s')}K_4+i\beta_h(s')K_5,\\
\widetilde\gamma(s')&=-is\beta_h(s')|K_3|^2+is\overline{\beta_h(s')}|K_4|^2-is\beta_h(s')|K_5|^2.
\end{align*}
Recall that for
\[
f(x) = e^{- z |x|^2 + \xi \cdot x},
\]
we have 
\begin{equation*}
(e^{i t \Delta} f )(x) = \frac{1}{1 + 4izt}  e^{  - \frac{z}{1 + 4i tz}|x|^2 
+ \frac{1}{1 + 4i tz} x \cdot \xi 
+  \frac{it }{1 + 4i tz}  \xi \cdot \xi}
\end{equation*}
thus
\begin{align*}
e^{is\Delta}(\overline{e^{is\Delta}g_{K,\sigma}}e^{is\Delta}g_{K_1,h}\overline{e^{is\Delta}g_{K_2,h}})&=\frac{|\beta_h(s)|^2\overline{\beta_\sigma(s)}}{(2\pi)^6\big(1+4isz(s)\big)}\\
&\quad\times e^{-\frac{z(s)}{1 + 4isz(s)}|x|^2 
+ \frac{1}{1 + 4i sz(s)} x \cdot \zeta(s) 
+  \frac{is }{1 + 4i sz(s)}  \zeta(s) \cdot \zeta(s)}e^{\gamma(s)}
\end{align*}
and
\begin{align*}
e^{-is'\Delta}(e^{is'\Delta}g_{K_3,h}\overline{e^{is'\Delta}g_{K_4,h}}e^{is'\Delta}g_{K_5,h})&=\frac{|\beta_h(s')|^2\beta_h(s')}{(2\pi)^6\big(1-4is'\widetilde z(s')\big)}\\
&\quad\times e^{-\frac{\widetilde z(s')}{1 - 4is'\widetilde z(s')}|x|^2 
+ \frac{1}{1 - 4i s'\widetilde z(s')} x \cdot \widetilde\zeta(s') 
-  \frac{is' }{1 - 4i s'\widetilde z(s')}  \widetilde\zeta(s') \cdot \widetilde\zeta(s')}e^{\widetilde\gamma(s')}.
\end{align*}
This gives
\begin{equation*}
\langle A\rangle_{K,\sigma}=2\pi\sigma^2\sum_{K_1,K_2,K_3,K_4,K_5}\int_0^t\int_0^s W_{K_1K_2K_3K_4K_5K}(s,s')\dd s\dd s'
\end{equation*}
with
\begin{align*}
W_{K_1K_2K_3K_4K_5K}(s,s')&:=\frac{|\beta_h(s)|^2\overline{\beta_\sigma(s)}|\beta_h(s')|^2\beta_h(s')}{(2\pi)^{12}\big(1+4isz(s)\big)\big(1-4is'\widetilde z(s')\big)}e^{c(s,s')+\frac{b(s,s')\cdot b(s,s')}{4a(s,s')}}
\end{align*}
and
\begin{align*}
a(s,s')&:=\frac{z(s)}{1 + 4isz(s)}+\frac{\widetilde z(s')}{1 - 4is'\widetilde z(s')},\\
b(s,s')&:=\frac{\zeta(s)}{1 + 4i sz(s)}+\frac{\widetilde\zeta(s')}{1 - 4i s'\widetilde z(s')},\\
c(s,s')&:=\frac{is }{1 + 4i sz(s)}  \zeta(s) \cdot \zeta(s)+\frac{is' }{1 - 4i s'\widetilde z(s')}  \widetilde\zeta(s') \cdot \widetilde\zeta(s')+\gamma(s)+\widetilde\gamma(s').
\end{align*}
Recall that we work in the scaling $(s+s')h^2\ll(s+s')\sigma^2\ll1$. We have
\begin{align*}
z(s)&=h^2\text{Re}\big(\beta_h(s)\big)+\frac{\sigma^2}{2}\overline{\beta_\sigma(s)}\\
&=h^2+\frac{\sigma^2}{2}+\mathcal{O}(s\sigma^4),\\
\zeta(s)&=-i\overline{\beta_\sigma(s)}K+i\beta_h(s)K_1-i\overline{\beta_h(s)}K_2\\
&=-i(K-K_1+K_2)+2s\sigma^2K+2sh^2(K_1+K_2)+\mathcal{O}(s^2\sigma^4),\\
\gamma(s)&=is\overline{\beta_\sigma(s)}|K|^2-is\beta_h(s)|K_1|^2+is\overline{\beta_h(s)}|K_2|^2\\
&=is(|K|^2-|K_1|^2+|K_2|^2)-2s^2\sigma^2|K|^2-2s^2h^2(|K_1|^2+|K_2|^2)+\mathcal{O}(s^3\sigma^4),\\
\widetilde z(s')&=h^2\text{Re}\big(\beta_h(s')\big)+\frac{h^2}{2}\overline{\beta_h(s')}\\
&=\frac{3h^2}{2}+\mathcal{O}(sh^4),\\
\widetilde\zeta(s')&=i\beta_h(s')K_3-i\overline{\beta_h(s')}K_4+i\beta_h(s')K_5\\
&=i(K_3-K_4+K_5)+sh^2(K_3+K_4+K_5)+\mathcal{O}(s^2h^4),\\
\widetilde\gamma(s')&=-is\beta_h(s')|K_3|^2+is\overline{\beta_h(s')}|K_4|^2-is\beta_h(s')|K_5|^2\\
&=-is(|K_3|^2-|K_4|^2+|K_5|^2)-s^2h^2(|K_3|^2+|K_4|^2+|K_5|^2)+\mathcal{O}(s^3h^4).
\end{align*}
We have
\begin{equation*}
sz(s)=\mathcal{O}(s\sigma^2)\quad\text{and}\quad s'\widetilde z(s')=\mathcal{O}(s'h^2)
\end{equation*}
hence we have
\begin{align*}
a(s,s')&=z(s)\big(1+\mathcal{O}(s\sigma^2)\big)+\widetilde z(s')\big(1+\mathcal{O}(s'h^2)\big)\\
&=\frac{1}{2}\sigma^2+\frac{5}{2}h^2+\mathcal{O}(s\sigma^2+s'h^2)\\
b(s,s')&=\zeta(s)\big(1+\mathcal{O}(s\sigma^2)\big)+\widetilde\zeta(s')\big(1+\mathcal{O}(s\sigma^2)\big)\\
&=-i(K-K_1+K_2-K_3+K_4-K_5)+\mathcal{O}(s\sigma^2),\\
c(s,s')&=\mathcal{O}(s).
\end{align*}
As for the first order term, this gives an exponential localisation on $K=K_1-K_2+K_3-K_4+K_5$. We get
\begin{align*}
\int_0^t\int_0^s&e^{is(|K|^2-|K_1|^2+|K_2|^2)}e^{-is'(|K_3|^2-|K_4|^2+|K_5|^2)}e^{-is|K-K_1+K_2|^2}e^{is'|K_3-K_4+K_5|^2}\dd s'\dd s\\
&=\int_0^t\int_0^se^{is(|K|^2-|K_1|^2+|K_2|^2-|K_6|^2)}e^{-is'(|K_3|^2-|K_4|^2+|K_5|^2-|K_6|^2)}\dd s'\dd s
\end{align*}
with $K_6:=K-K_1+K_2=K_3-K_4+K_5$. The rest of the proof follows from similar computations and a bound on the derivative with an integration by part, as for the proof of Proposition \ref{PropExpansion1}.
\end{proof}

We get the expansion
\begin{align*}
&\langle v(t)\rangle_{K,\sigma}=\varepsilon\zeta(K)-\frac{\eps^3}{(2\pi)^4}\sum_{K=K_1-K_2+K_3}\zeta_{K_1}\overline{\zeta_{K_2}}\zeta_{K_3}\frac{1-e^{-it\Delta\omega_{KK_1K_2K_3}}}{{\Delta\omega_{KK_1K_2K_3}}}\\
&\quad-\frac{\eps^5}{(2\pi)^8}2\sum_{\substack{K=K_1-K_2+K_3\\K_1=K_4-K_5+K_6}}\frac{\zeta_{K_4}\overline{\zeta_{K_5}}\zeta_{K_6}\overline{\zeta_{K_2}}\zeta_{K_3}}{\Delta\omega_{K_1K_4K_5K_6}}\left(\frac{e^{-it(\Delta\omega_{KK_1K_2K_3}-\Delta\omega_{K_1K_4K_5K_6})}-1}{\Delta\omega_{KK_1K_2K_3}-\Delta\omega_{K_1K_4K_5K_6}}-\frac{e^{-it\Delta\omega_{K_1K_4K_5K_6}}-1}{\Delta\omega_{KK_1K_2K_3}}\right)\\
&\quad-\frac{\eps^5}{(2\pi)^8}\sum_{\substack{K=K_1-K_2+K_3\\K_2=K_4-K_5+K_6}}\frac{\zeta_{K_1}\overline{\zeta_{K_4}}\zeta_{K_5}\overline{\zeta_{K_6}}\zeta_{K_3}}{\Delta\omega_{K_2K_4K_5K_6}}\left(\frac{e^{-it(\Delta\omega_{KK_1K_2K_3}-\Delta\omega_{K_2K_4K_5K_6})}-1}{\Delta\omega_{KK_1K_2K_3}-\Delta\omega_{K_2K_4K_5K_6}}-\frac{e^{-it\Delta\omega_{K_2K_4K_5K_6}}-1}{\Delta\omega_{KK_1K_2K_3}}\right)\\
&\quad+r_{K,\sigma}(t)
\end{align*}
using that for $a,b\in\R$, we have
\begin{equation*}
\int_0^t\int_0^se^{isa}e^{-is'b}\dd s'\dd s=\frac{1}{b}\left(\frac{e^{it(a-b)}-1}{(a-b)}-\frac{e^{ita}-1}{a}\right)
\end{equation*}
with the conventions $\frac{e^{ita}-1}{a}=it$ for $a=0$ and $\frac{1}{b}\big(\frac{e^{it(a-b)}-1}{a-b}-\frac{e^{ita}-1}{a}\big)=-\frac{1}{2}t^2$ for $a=b=0$. This is exactly the formal computations done by Buckmaster, Germain, Hani and Shatah in \cite{BGHS} Section $3$ and they prove that taking the expectation of yields the kinetic operator $\CT_K$. While the main term in $E_1(t,\eta)$ vanishes at first order, it still dominates the second order term in the expansion with respec to $\varepsilon$. Because the nonlinearity satisfies $g(\overline u)=\overline{g(u)}$, $E_1(t,\eta)=-E_1(-t,\eta)$ and we have
\begin{equation*}
\IE\big[|\langle v(t)\rangle_{K,\sigma}|^2\big]+\IE\big[|\langle v(-t)\rangle_{K,\sigma}|^2\big]=2\varepsilon^2|\eta(K)|^2+2\eps^6E_2(t,\eta)+\mathcal{O}(\varepsilon^8).
\end{equation*}

\begin{theorem}
Let $(h,L,\sigma)$ be in the asymptotic scaling \eqref{regime1}, $K\in\Z_L^2$, $u(t)$ the solution to \eqref{NLS} with random initial data $u(0)=\varepsilon\varphi_\theta$, $\delta>0$ and assume $\varepsilon\ll\frac{h^2}{L}$. Then $v(t)=e^{-it\Delta}u(t)$ satisfies
\begin{equation*}
\IE\big[|\langle v(t)\rangle_{K,\sigma}|^2\big]=\frac{\sigma^4\varepsilon^2}{(\sigma^2+h^2)^2}|\eta(K)|^2+\varepsilon^4E_1(t,\eta)+\frac{t\varepsilon^6L^4}{(2\pi)^8}\CT_K(\eta)+o(t\varepsilon^5L^4)
\end{equation*}
for $L^\delta\le t\le L^{1-\delta}$. For $L^{2+\delta}\le t\le\frac{1}{hL^\delta}$, we have 
\begin{equation*}
\IE\big[|\langle v(t)\rangle_{K,\sigma}|^2\big]=\frac{\sigma^4\varepsilon^2}{(\sigma^2+h^2)^2}|\eta(K)|^2+\varepsilon^4E_1(t,\eta)+\frac{2t^2\varepsilon^6L^2\log(L)}{\zeta(2)(2\pi)^8}\CT_K(\eta)+o\big(t^2\varepsilon^5L^2\log(L)\big).
\end{equation*}
Moreover, we have
\begin{equation*}
\IE\Big[\langle v(t)\rangle_{K,\sigma}\overline{\langle v(t)\rangle_{K',\sigma}}\Big]=o\big(\varepsilon^4L^4+t\varepsilon^4L^2\log(L)\big)
\end{equation*}
for $K\neq K'$.
\end{theorem}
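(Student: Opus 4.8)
The starting point is the almost sure expansion
\[
\langle v(t)\rangle_{K,\sigma}=\frac{\sigma^2\varepsilon}{\sigma^2+h^2}\,\eta(K)\,e^{i\theta_K}-i\varepsilon^3\langle V^1(t)\rangle_{K,\sigma}-\varepsilon^5\langle V^2(t)\rangle_{K,\sigma}+\mathcal{O}(\varepsilon^7),
\]
which follows from Theorem \ref{TheoremCaGa} together with the explicit expressions of $\langle V^1\rangle_{K,\sigma}$ and $\langle V^2\rangle_{K,\sigma}$ in Propositions \ref{PropExpansion1} and \ref{PropExpansion2}; these hold almost surely because the randomisation $\zeta_K=\eta_K e^{i\theta_K}$ does not change $|\widehat\varphi|$, so $\varphi_\theta$ satisfies the bounds of Lemma \ref{LemmaSizeIni} and $\varepsilon\ll h^2/L$ ensures $\varepsilon\ll1/\Norm{\varphi}{\Sigma}$. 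Squaring and expanding produces the blocks $E^0_{K,\sigma}+\varepsilon^4E_1+\varepsilon^6E_2+(\text{remainder})$ of the statement, and I would take the expectation over the i.i.d.\ uniform phases $(\theta_K)_{K\in\Z_L^2}$: in each monomial $\zeta_{K_1}\overline{\zeta_{K_2}}\cdots$ the expectation vanishes unless every $e^{+i\theta}$ is matched by an $e^{-i\theta}$, so $\E$ selects a Wick‑type pairing of the frequencies, with $\E[|\zeta_K|^2]=n_K:=|\eta(K)|^2$. The zeroth block is immediate, $E^0_{K,\sigma}=\frac{\sigma^4}{(\sigma^2+h^2)^2}n_K$. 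The first block $E_1$ is the pairing of $\overline{\eta(K)}e^{-i\theta_K}$ against the cubic $V^1_K$; I would not evaluate it, only use that since $g(\overline u)=\overline{g(u)}$ it is odd in $t$, so it is harmless and, if needed for the limit, can be removed by passing to the symmetrised quantity $\E[|\langle v(t)\rangle_{K,\sigma}|^2]+\E[|\langle v(-t)\rangle_{K,\sigma}|^2]$, in which only $E^0$ and $2E_2$ survive.

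The heart is the sixth order block $E_2$, coming from $\E[|\langle V^1\rangle_{K,\sigma}|^2]$ and $2\,\E[\Re(\langle v\rangle_{K,\sigma}\overline{\langle V^2\rangle}_{K,\sigma})]$. Using Proposition \ref{PropExpansion1}, the ladder pairing $K_1=K_1',\,K_2=K_2',\,K_3=K_3'$ of the two sums defining $|\langle V^1\rangle_{K,\sigma}|^2$ (together with its mirror image) produces, up to pairings with frequency coincidences, the term $\frac{1}{(2\pi)^8}\sum_{K=K_1-K_2+K_3}n_{K_1}n_{K_2}n_{K_3}\big|\frac{1-e^{-it\Delta\omega_{KK_1K_2K_3}}}{\Delta\omega_{KK_1K_2K_3}}\big|^2$, the bracket being $t^2$ on the resonant set. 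Using Proposition \ref{PropExpansion2}, whose time kernel $\int_0^t\int_0^s e^{isa}e^{-is'b}\,\dd s'\dd s$ is the one computed in the display before the statement, the $\langle V^2\rangle$ cross term supplies the three missing terms $-n_Kn_{K_2}n_{K_3}+n_Kn_{K_1}n_{K_3}-n_Kn_{K_1}n_{K_2}$, carrying the same time weight — exactly the Feynman diagram bookkeeping of Buckmaster, Germain, Hani and Shatah \cite{BGHS} Section $3$ — so that the four terms rebuild $n_Kn_{K_1}n_{K_2}n_{K_3}\big(\frac1{n_K}-\frac1{n_{K_1}}+\frac1{n_{K_2}}-\frac1{n_{K_3}}\big)$. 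The pairings involving coincidences among the $K_i$ (the ``trivial resonances''), the remainders $r_K$ of Propositions \ref{PropExpansion1}--\ref{PropExpansion2}, and all $\varepsilon$‑orders beyond the sixth are then checked, in the regime \eqref{regime1} with $\varepsilon\ll h^2/L$, to be below the main term, by the same accounting as in the deterministic proof and the counting estimates of \cite{fgh}. This paragraph is the main obstacle: making the formal BGHS cancellation quantitative and uniform in $L^\delta\le t\le 1/(hL^\delta)$ in the explicit Gaussian‑truncated setting, and keeping every side pairing and every remainder strictly below $tL^4$ (resp.\ $t^2L^2\log L$).

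It then remains to pass to the limit in $D_K(t):=\sum_{K=K_1-K_2+K_3}n_Kn_{K_1}n_{K_2}n_{K_3}\big(\frac1{n_K}-\frac1{n_{K_1}}+\frac1{n_{K_2}}-\frac1{n_{K_3}}\big)\big|\frac{1-e^{-it\Delta\omega_{KK_1K_2K_3}}}{\Delta\omega_{KK_1K_2K_3}}\big|^2$, splitting into resonant and non‑resonant parts. On the exact resonances $\Delta\omega=0$ the weight is $t^2$, and the resonant count of \cite{fgh} gives $\sum_{\mathrm{res}}(\cdots)\sim\frac{2L^2\log L}{\zeta(2)}\CT_K(\eta)$, so this part is $\frac{2t^2L^2\log L}{\zeta(2)}\CT_K(\eta)(1+o(1))$; using Proposition $3.9$ of \cite{fgh} and $\big|\frac{1-e^{-it\xi}}{\xi}\big|^2\le\min(t^2,4\xi^{-2})$, the non‑resonant part is $\mathcal O(tL^{4+\delta})=o(t^2L^2\log L)$ for $t\ge L^{2+\delta}$, giving the second formula. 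For $L^\delta\le t\le L^{1-\delta}$ the non‑resonant part dominates: it is a Riemann sum over $A,B\in\Z_L^2$ equal to $L^4\int_{K=k_1-k_2+k_3}(\cdots)\,\dd k_1\dd k_2\dd k_3+o(tL^4)$, and then $t\to\infty$ localises it on the resonant manifold via $\int_\R t^2\big|\frac{1-e^{-it\xi}}{t\xi}\big|^2 g(\xi)\,\dd\xi=t\int_\R\big(\frac{\sin(u/2)}{u/2}\big)^2 g(u/t)\,\dd u\to 2\pi\,t\,g(0)$, which applies because the $\xi$‑profile of the $4$‑linear microcanonical integrand is $\mathcal C^\alpha$ by the dispersive estimate of Section \ref{SectionScatt}; reproducing $\frac{tL^4}{(2\pi)^8}\CT_K(\eta)$ with the normalisation already tracked for the CR operator in Proposition \ref{LemCR}, while the resonant part is $\mathcal O(t^2L^2\log L)=o(tL^4)$ for $t\le L^{1-\delta}$. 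This gives the first formula.

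For the decorrelation, fix $K\neq K'$. The $\varepsilon^2$ term of $\E[\langle v(t)\rangle_{K,\sigma}\overline{\langle v(t)\rangle_{K',\sigma}}]$ is a multiple of $\eta(K)\overline{\eta(K')}\,\E[e^{i(\theta_K-\theta_{K'})}]=0$, and at order $\varepsilon^4$ every pairing of $e^{i\theta_K}$ (resp.\ $e^{-i\theta_{K'}}$) against the cubic $V^1_{K'}$ (resp.\ $V^1_K$) forces, through $K'=K_1-K_2+K_3$ (resp.\ $K=\cdots$), the equality $K=K'$, so this order vanishes too. At order $\varepsilon^6$ the only new pairing is the ladder one in $\E[\langle V^1\rangle_{K,\sigma}\overline{\langle V^1\rangle}_{K',\sigma}]$, which again forces $K=K'$ and hence does not contribute; the surviving pairings all carry frequency coincidences and are bounded by the counts of \cite{fgh}, so the $\varepsilon^6$ (and higher) contribution is $\mathcal O\big(\varepsilon^6(tL^4+t^2L^2\log L)\big)$, which under $\varepsilon\ll h^2/L$ and \eqref{regime1} — so $\varepsilon^2L^2\le h^4\ll1$ and $\varepsilon^2 t\ll1$ — is $o\big(\varepsilon^4L^4+t\varepsilon^4L^2\log L\big)$. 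This is the announced propagation of chaos.
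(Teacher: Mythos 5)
Your proposal follows essentially the same route as the paper: the $\varepsilon$-expansion of $|\langle v(t)\rangle_{K,\sigma}|^2$, the phase-pairing computation of $E^0$, $E^1$, $E^2$ with the ladder term of $\E[|\langle V^1\rangle_{K,\sigma}|^2]$ combined with the $\langle V^2\rangle$ cross term to rebuild the kinetic integrand à la \cite{BGHS}, the Riemann-sum plus sinc-localisation argument for $L^\delta\le t\le L^{1-\delta}$ and the resonant count of \cite{fgh} for $t\ge L^{2+\delta}$, and the vanishing of the $\varepsilon^2$ and $\varepsilon^4$ pairings for the decorrelation. The obstacle you flag (making the BGHS cancellation quantitative and controlling the coincidence pairings and remainders uniformly in $t$) is precisely the part the paper also delegates to \cite{BGHS} and \cite{fgh}, so the two arguments match in both substance and level of detail.
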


\begin{proof}
Recall that
\begin{equation*}
\E\big[|\langle v(t)\rangle_{K,\sigma}|^2\big]=\eps^2E_{K,\sigma}^0(t,\varphi)+\eps^4E_{K,\sigma}^1(t,\varphi)+\eps^6E_{K,\sigma}^2(t,\varphi)+R_{K,\sigma}(t,\varphi)
\end{equation*}
with
\begin{align*}
E_{K,\sigma}^0(t,\varphi)&=\E\big[|\langle \varphi\rangle_{K,\sigma}|^2\big],\\
E_{K,\sigma}^1(t,\varphi)&=2\E\big[\Im\big(\langle v\rangle_{K,\sigma}\overline{\langle V^1\rangle}_{K,\sigma}\big)\big],\\
E_{K,\sigma}^2(t,\varphi)&=\E\big[|\langle V^1\rangle_{K,\sigma}|^2\big]+2\E\big[\Im\big(\langle v\rangle_{K,\sigma}\overline{\langle V^2\rangle}_{K,\sigma}\big)\big].
\end{align*}
In the second order term, the expectation
\begin{equation*}
\IE\big[\zeta_{K_1}\zeta_{K_2}\zeta_{K_3}\overline{\zeta_{K_4}\zeta_{K_5}\zeta_{K_6}}\big]\in\{0,1\}
\end{equation*}
gives $6$ terms with $(K_1,K_2,K_3)$ a permutation of $(K_4,K_5,K_6)$. For the square term, similar computations as Proposition \ref{PropExpansion1} gives
\begin{align*}
&\E\big[|\langle V^1\rangle_{K,\sigma}|^2\big]\\
&=4t^2|\eta_{K}|^2\sum_{K_1,K_2}|\eta_{K_1}|^2|\eta_{K_2}|^2+2\sum_{K=K_1-K_2+K_3}|\eta_{K_1}|^2|\eta_{K_2}|^2|\eta_{K_3}|^2\Big|\frac{1-e^{-it\Delta\omega_{KK_1K_2K_3}}}{\Delta\omega_{KK_1K_2K_3}}\Big|^2\\
&\quad+o(tL^2\log(L)+L^4)\\
&=4t^2|\eta_{K}|^2\sum_{K_1,K_2}|\eta_{K_1}|^2|\eta_{K_2}|^2+2\sum_{K=K_1-K_2+K_3}|\eta_{K_1}|^2|\eta_{K_2}|^2|\eta_{K_3}|^2\Big|\frac{\sin(\frac{1}{2}t\Delta\omega_{KK_1K_2K_3})}{\frac{1}{2}\Delta\omega_{KK_1K_2K_3}}\Big|^2\\
&\quad+o(t^2L^2\log(L)+tL^4)
\end{align*}
for $t=o(h^{-1})$. Following the computation in \cite{BGHS} Section $3$ and Proposition \ref{PropExpansion2}, we get
\begin{align*}
&E_2(t,\eta)\\&=\sum_{K=K_1-K_2+K_3}|\eta_K|^2|\eta_{K_1}|^2|\eta_{K_2}|^2|\eta_{K_3}|^2\Big(\frac{1}{|\eta_{K}|^2}-\frac{1}{|\eta_{K_1}|^2}+\frac{1}{|\eta_{K_2}|^2}-\frac{1}{|\eta_{K_3}|^2}\Big)\Big|\frac{\sin(\frac{1}{2}t\Delta\omega_{KK_1K_2K_3})}{\frac{1}{2}\Delta\omega_{KK_1K_2K_3}}\Big|^2\\
&\quad+o(t^2L^2\log(L)+tL^4)
\end{align*}
which is again a convergence Riemann sum for $t\le L^{1-\delta}$. For $L^{2+\delta}\le t\le\frac{1}{hL^\delta}$, the result follows with similar proof as done in \cite{fgh}. For the propagation of chaos, the term of order $\eps^2$ cancels using that $\IE\big[\zeta_K\overline\zeta_{K'}\big]=0$ for $K\neq K'$. To complete the proof, one only needs to prove that the term $E_1(t,\eta)$ cancels at first order which again follows from the computation in \cite{BGHS}.
\end{proof}


\appendix

\section{Appendix}

We compute here the Fourier transform of complex Gaussians.

\begin{lemma}
\label{fouriertransformgaussians}
Let 
\begin{equation}
\label{conditionz}
z\in \Bbb C, \ \ \Re(z)\ge 0. 
\end{equation}
Then (in dimension 2)
\begin{equation}
\label{foueriergaussinees}
\mathcal F\left(e^{-z|\cdot|^2}\right)=\frac{\pi}{z}e^{-\frac{|\xi|^2}{4z}}.
\end{equation}
More generally, let $z=z_1+iz_2\in \Bbb C$, $a\in \Bbb R^2$ and $$f(x)=e^{-z|x|^2+a\cdot x},$$ then 
\begin{equation}
\label{fouriergausian}
\hat{f}(k)=\frac{\pi}{z}e^{-\frac{|k|^2}{4z}-\frac{i(a\cdot k)}{2z}+\frac{|a|^2}{4z}}. 
\end{equation}
and the equivalent formula: for any $\xi = (\xi_1,\xi_2) \in \C^2$, with the notation $\xi \cdot x = \xi_1 x_1 + \xi_2 x_2$ and $\xi\cdot \xi = \xi_1^2 + \xi_2^2 \in \C$, 
\begin{equation}
\label{magic}
\int_{\R^d} e^{-z|x|^2+ \xi \cdot x} \dd x = \frac{\pi}{z} e^{ \frac{\xi \cdot \xi}{4z}}. 
\end{equation}
\end{lemma}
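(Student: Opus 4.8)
The plan is to reduce all three formulas to the single master identity \eqref{magic}, since \eqref{foueriergaussinees} is the case $\xi = 0$ (the frequency variable playing the role of $\xi$ there), and \eqref{fouriergausian} follows by choosing $\xi = a - ik$ in \eqref{magic} and expanding $(a-ik)\cdot(a-ik) = |a|^2 - 2i\,a\cdot k - |k|^2$. So the only real content is to establish $\int_{\R^d} e^{-z|x|^2 + \xi\cdot x}\,\dd x = \tfrac{\pi}{z}e^{\xi\cdot\xi/(4z)}$; I would carry this out for $d = 2$ exactly as in the statement (the general $d$ is identical with $\pi/z$ replaced by $(\pi/z)^{d/2}$, the principal branch, unambiguous when $\Re z > 0$). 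First I record the base case: for $z > 0$ real and $\xi = 0$, Fubini together with $\int_\R e^{-t^2}\,\dd t = \sqrt\pi$ and the substitution $t = \sqrt z\,x$ gives $\int_{\R^2} e^{-z|x|^2}\,\dd x = \pi/z$.

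Next, for $\xi = (\xi_1,\xi_2)\in\C^2$ and $\Re z > 0$, I complete the square coordinatewise, $-z x_j^2 + \xi_j x_j = -z\big(x_j - \tfrac{\xi_j}{2z}\big)^2 + \tfrac{\xi_j^2}{4z}$, so that
\[
\int_{\R^2} e^{-z|x|^2 + \xi\cdot x}\,\dd x = e^{\frac{\xi\cdot\xi}{4z}}\,\prod_{j=1}^2\int_\R e^{-z\left(x_j - \frac{\xi_j}{2z}\right)^2}\,\dd x_j .
\]
Each one-dimensional factor is evaluated by a contour shift: the function $w\mapsto e^{-zw^2}$ is entire and, for $\Re z > 0$, decays like $e^{-\Re z\,(\Re w)^2}$ uniformly for $\Im w$ ranging over the bounded segment joining $0$ to $-\Im(\xi_j/2z)$; applying Cauchy's theorem to a rectangle with vertical sides sent to infinity then gives $\int_\R e^{-z(x_j - c_j)^2}\,\dd x_j = \int_\R e^{-z x_j^2}\,\dd x_j = \sqrt{\pi/z}$. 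Multiplying the two factors yields \eqref{magic} for $\Re z > 0$, and hence \eqref{foueriergaussinees} and \eqref{fouriergausian} under that hypothesis — which is all that is needed in the body of the paper, where $z$ is always $\tfrac12\epsilon^2$ or a combination of such terms with strictly positive real part.

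To match the stated hypothesis $\Re z \ge 0$ of \eqref{conditionz}, it remains to pass to the boundary $z = iz_2$, $z_2 \ne 0$. There the left-hand integral is only improperly (Fresnel-type) convergent, so I would argue by continuity: apply the identity already proved with $z$ replaced by $z + \varepsilon$, $\varepsilon > 0$, and let $\varepsilon \to 0^+$; the right-hand side converges to $\tfrac{\pi}{z}e^{\xi\cdot\xi/(4z)}$ by inspection, while convergence of the left-hand side to the improper integral follows from one integration by parts in each variable, producing a dominating expression that is absolutely convergent uniformly in $\varepsilon$. The main obstacle is precisely this last passage together with the contour-shift justification; both are classical but are the only non-routine points, and if one is content to state the lemma for $\Re z > 0$ (which suffices for every application in this paper) they can be omitted entirely.
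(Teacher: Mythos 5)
Your argument is correct, but it is organized differently from the paper's. You derive everything from the single identity \eqref{magic}, proved by completing the square in $\C$ and justifying the resulting complex contour shift via Cauchy's theorem on a rectangle whose vertical sides are sent to infinity. The paper instead takes \eqref{foueriergaussinees} as its starting point and obtains \eqref{fouriergausian} by writing $-z|x|^2+a\cdot x=-z\left|x-\tfrac{a}{2z_1}\right|^2-i\tfrac{z_2a}{z_1}\cdot x+\tfrac{z|a|^2}{4z_1^2}$, i.e.\ by completing the square with a \emph{real} translation vector $a/(2z_1)$, so that only the standard translation and modulation rules of the Fourier transform are needed and no complex contour deformation ever appears; the price is a longer algebraic simplification of the exponent at the end. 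Two remarks on the comparison. First, the paper's decomposition divides by $z_1=\Re z$, so its computation silently requires $\Re z>0$ even though the hypothesis \eqref{conditionz} allows $\Re z=0$; your contour-shift route, supplemented by the limit $z+\varepsilon\to z$, is what actually reaches the boundary case, so on this point you are more complete. Second, your proof of \eqref{magic} still invokes $\int_{\R}e^{-zx^2}\,\dd x=\sqrt{\pi/z}$ for complex $z$ with $\Re z>0$, which deserves one more word (holomorphy in $z$ plus the identity theorem from the real case, or a rotation of the contour); but since the paper does not prove its base formula \eqref{foueriergaussinees} at all, the two arguments are at a comparable level of rigor there, and yours is the cleaner and more self-contained of the two.
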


\begin{proof}
Indeed,
$$
-z|x|^2+a\cdot x= -z\left|x-\frac{a}{2z_1}\right|^2-i\frac{z_2 a}{z_1}\cdot x+\frac{z|a|^2}{4z_1^2}
$$
We now recall
$$\widehat{f(x-a)}(k)=\hat{f}(k)e^{-ia\cdot k }, \ \ \widehat{fe^{-ia\cdot x}}=\hat{f}(k+a).$$ Let $$g(x)=e^{-z\left|x-\frac{a}{2z_1}\right|^2},$$ then $$\hat{g}(k)=\left(\frac{\pi}{z}\right)^{\frac d2}e^{-\frac{|k|^2}{4z}-i\frac{a}{2z_1}\cdot k}$$
and $$f(x)=g(x)e^{-i\frac{z_2 a}{z_1}\cdot x+\frac{z|a|^2}{4z_1^2}}$$
and hence
\begin{eqnarray*}
\hat{f}(k)&=&e^{\frac{z|a|^2}{4z_1^2}}\hat{g}\left(k+\frac{z_2}{z_1}a\right)=\left(\frac{\pi}{z}\right)^{\frac d2}e^{\frac{z|a|^2}{4z_1^2}}e^{-\frac{1}{4z}\left|k+\frac{z_2}{z_1}a\right|^2-i\frac{a}{2z_1}\cdot\left(k+\frac{z_2}{z_1}a\right)}\\
& = & \left(\frac{\pi}{z}\right)^{\frac d2}e^{\frac{z|a|^2}{4z_1^2}-\frac{1}{4z}\left(|k|^2+\frac{2z_2}{z_1}a\cdot k+\frac{z_2^2}{z_1^2}|a|^2\right)-\frac{ia\cdot k}{2z_1}-\frac{i|a|^2z_2}{2z_1^2}}\\
& = &  \left(\frac{\pi}{z}\right)^{\frac d2}e^{-\frac{|k|^2}{4z}+(a\cdot k)\left(-\frac{z_2}{2zz_1}-\frac{i}{2z_1}\right)+|a|^2\left(\frac{z}{4z_1^2}-\frac{z_2^2}{4zz_1^2}-\frac{iz_2}{2z_1^2}\right)}\\
& = &  \left(\frac{\pi}{z}\right)^{\frac d2}e^{-\frac{|k|^2}{4z}-\frac{a\cdot k}{2zz_1}\left[z_2+i(z_1+iz_2)\right]+\frac{|a|^2}{4zz_1^2}\left[(z_1+iz_2)^2-z_2^2-2iz_2(z_1+iz_2)\right]}\\
& =& \left(\frac{\pi}{z}\right)^{\frac d2}e^{-\frac{|k|^2}{4z}-\frac{i(a\cdot k)}{2z}+\frac{|a|^2}{4z}}.
\end{eqnarray*}

\end{proof}

\bibliographystyle{siam}

\begin{thebibliography}{99}

 



\bibitem{BCD}
H. Bahouri, J-Y. Chemin and R. Danchin, {\em Fourier analysis and nonlinear partial differential equations}, Grundlehren der Mathematischen Wissenschaften 343, Berlin: Heidelberg (2011). 

\bibitem{BGHS}
T. Buckmaster, P. Germain, Z. Hani and J. Shatah, {\em Onset of the wave turbulence description of the longtime behavior of the nonlinear Schrödinger equation}, Invent. Math. 225, No. 3, 787-855 (2021). 

\bibitem{cazenave}
T. Cazenave, {\em Semilinear Schr\"odinger equations}, Courant Lecture Notes 10, American Mathematical Society (2003). 

\bibitem{CaGa09}
R. Carles and I. Gallagher, {\em Analyticity of the scattering operator for semilinear dispersive equations}. 
Commun. Math. Phys. 286 (2009) 1181--1209. 



\bibitem{CoGe20} C. Collot and P. Germain. {\em Derivation of the homogeneous kinetic wave equation: longer time scales}, https://arxiv.org/abs/2007.03508. 

 
 
\bibitem{DH1}
Y. Deng and Z. Hani, {\em On the derivation of the wave kinetic equation for NLS}, Forum Math. Pi 9, Paper No. e6, 37 p. (2021).

\bibitem{DH2}
Y. Deng and Z. Hani, {\em Full derivation of the wave kinetic equation}, to appear in Inventiones Mathematicae (2023).

\bibitem{DH3}
Y. Deng and Z. Hani, {\em Propagation of chaos and higher order statistics in wave kinetic theory}, to appear in Journal of European Mathematical Society (2023)

\bibitem{DH4}
Y. Deng and Z. Hani, {\em Derivation of the wave kinetic equation: full range of scaling laws}, \href{https://arxiv.org/abs/2301.07063}{https://arxiv.org/abs/2301.07063}



\bibitem{dymovkuksin1}
A. Dymov and S. Kuksin,
{\em Formal Expansions in Stochastic Model for Wave Turbulence 2: Kinetic limit},
Communications in Mathematical Physics volume 382, pages 951--1014 (2021)

\bibitem{dymovkuksin2}
A. Dymov and S. Kuksin,
{\em Formal Expansions in Stochastic Model for Wave Turbulence 2: Method of Diagram Decomposition}, Journal of Statistical Physics volume 190, Article number: 3 (2023)


\bibitem{faou1}
E. Faou, {\em Linearized wave turbulence convergence results for three-wave systems}, 
Commun. Math. Phys. 378 (2020) 807--849. 

\bibitem{fgh}
E. Faou, P. Germain and Z. Hani, {\em The weakly nonlinear large box limit for the 2D cubic nonlinear Schr\"odinger equation}, 
J. Amer. Math. Soc. 29 (2016) 915--982


\bibitem{germain}
P. Germain, A. Ionescu and M.-B. Tran, {\em Optimal local well-posedness theory for the kinetic wave equation},  
\href{https://arxiv.org/abs/1711.05587}{https://arxiv.org/abs/1711.05587}

\bibitem{GV1}
J. Ginibre and G. Velo, {\em On a class of nonlinear Schr\"odinger equations}, J. Funct. Anal. 32 (1979) 1--71.   






\bibitem{Kuksin2}
S. Kuksin, {\em Resonant averaging for weakly nonlinear stochastic Schrodinger equations}, Seminar Laurent Schwartz - EDP et applications (2013-2014), Exp. 9. 

\bibitem{Kuksin1}
S. Kuksin and A. Maiocchi, 
{\em Derivation of a wave kinetic equation from the the resonant-averaged stochastic NLS equation}, Physica D 309 (2015) 65--70. 


\bibitem{LuSpohn}
J. Lukkarinen and H. Spohn, {\em Weakly nonlinear Schr\"odinger equation with random initial data}. Invent. Math. 183 (2010) 1--110.

\bibitem{MMT}
A. J. Majda, D. W. McLaughlin and E. G. Tabak, {\em A one-dimensional model for dispersive wave turbulence} 
J. Nonlinear Sci. Vol. 6 (1997) 9--44. 


\bibitem{Naz1}
S.~Nazarenko. {\em Wave turbulence}. Lecture Notes in Physics 825 (2011). Springer.


\bibitem{Peierls1}
R. Peierls. {\em Zur kinetischen Theorie der Warmeleitung in Kristallen}. Annalen der Physik,
395 (1929) 1055--1101.

\bibitem{Peierls2}
R. E. Peierls.{\em Quantum theory of solids.} In Theoretical physics in the twentieth century
(Pauli memorial volume), pages 140--160. Interscience, New York, 1960.



    







\bibitem{tsutsumi}
Y. Tsutsumi, {\em Scattering problem for nonlinear Schr\"odinger equations},
Ann. Inst. H. Poincar\'e Phys. Th\'eor. 43 (1985) 321--347.


\bibitem{Zakh84}
V. E. Zakharov, {\em Kolmogorov spectra in weak turbulence problems}, Handbook Plasma Physics, Vol. 2, Basic Plasma Physics (1984) 3--36. 

\bibitem{ZLF92}
V. E. Zakharov, V. L'vov and G. Falkovich, {\em Kolmogorov spectra of turbulence 1: Wave turbulence}, Springer, Berline (1992). 

\end{thebibliography}

\vspace{2cm}

\noindent \textcolor{gray}{$\bullet$} E. Faou --  INRIA Rennes, Univ Rennes \& Institut de Recherche Math\'ematiques de Rennes, CNRS UMR 6625 Rennes, Campus Beaulieu F-35042 Rennes Cedex, France.\\
{\it E-mail}: erwan.faou@inria.fr

\noindent \textcolor{gray}{$\bullet$} A. Mouzard --  ENS de Lyon, CNRS, Laboratoire de Physique, F-69342 Lyon, France.\\
{\it E-mail}: antoine.mouzard@math.cnrs.fr

\end{document}